\DeclareMathOperator{\sech}{sech}
\DeclareMathOperator{\Ker}{Ker}
\newtheorem{proposition}{Proposition}[section]
\newtheorem{theorem}{Theorem}[section]
\newtheorem{lemma}[proposition]{Lemma}
\newtheorem{remark}{Remark}[section]
\newtheorem{notation}{Notation}[section]
\newtheorem*{thank}{Acknowledgments}
\newcommand{\R}{\mathbb{R}}
\newcommand{\T}{\mathbb{T}}
\newcommand{\Z}{\mathbb{Z}}
\numberwithin{equation}{section}
\newcommand{\p}{\partial}
\newcommand{\varphias}{Q}
\numberwithin{equation}{section}
\author{}
\date{}
\title{Pitchfork bifurcation at line solitons for nonlinear Schr\"{o}dinger equations on the product space
$\R \times \T$}
\author{
Takafumi Akahori, 
Yakine Bahri, \ 
Slim Ibrahim \ and Hiroaki Kikuchi}
\date{}
\begin{document}
\maketitle

\begin{abstract}
In this paper, we study the bifurcation problem from a line soliton 
 for a stationary nonlinear Schr\"{o}dinger equation on the product space $\R \times \T$. We extend earlier results to a larger class of the nonlinearity in the equation. The salient point of our analysis relies on a lower bound of solution to the ``auxiliary equation''  and then on the application of the Crandall-Rabinowitz argument.
\end{abstract}

\section{Introduction}
In this paper, we study the bifurcation problem from line solitons for the following stationary nonlinear Schr\"odinger equation posed on the product space $\mathbb{R}\times \mathbb{T}$:  
\begin{equation} \label{sp}
- \p_{x}^{2} u- \p_{y}^{2} u + \omega u 
- |u|^{p-1} u = 0,
\quad
(x,y)\in \mathbb{R}\times \mathbb{T},
\end{equation}
where $\mathbb{T}:= \mathbb{R}/2\pi \Z$ is the one dimensional torus, 
 $\omega > 0$, $p>1$ and $u$ is the unknown real-valued function on $\R \times \T$. 
 Introducing the function $\mathcal{F} \colon (0, \infty) \times H^{2}(\R \times \T) \to L^{2}(\R \times \T)$ defined by
\begin{equation}\label{eq-F}
\mathcal{F}(\omega, u) 
:= - \p_{x}^{2} u - \p_{y}^{2} u + \omega u 
- |u|^{p-1} u,
\end{equation}
equation \eqref{sp} can now be written as $\mathcal{F}(\omega, u) = 0$. We will use $\partial_{u}\mathcal{F}$ to denote the derivative of $\mathcal{F}=\mathcal{F}(\omega,u)$ with respect to the second variable $u$.

For each $\omega>0$, equation \eqref{sp} admits a positive solution in $H^{1}(\mathbb{R}\times \mathbb{T})$ which is independent of the variable $y \in \mathbb{T}$. Such a solution is called a line soliton to \eqref{sp}. In other words, a line soliton to \eqref{sp} is defined as a positive $H^{1}$-solution to the following ordinary differential equation:
\begin{equation}
\label{line-sp}
- \frac{d^{2}R}{dx^{2}} + 
\omega R - 
R^{p} = 0, 
\qquad x \in \R. 
\end{equation}
It is known that for any $\omega>0$ and $p>1$, a unique positive $H^{1}$-solution to \eqref{line-sp} exists; We use $R_{\omega}$ to denote the line soliton to \eqref{sp}. The line soliton $R_{\omega}$ is explicitly given as
\begin{equation}
\label{Rw:explicit}
R_{\omega}(x,y)=
R_{\omega}(x) := \omega^{\frac{1}{p-1}}
\Big(
\frac{p+1}{2}
\Big)^{\frac{1}{p-1}} 
\sech^{\frac{2}{p-1}}
\big(\frac{p-1}{2} \sqrt{\omega} x\big).
\end{equation}
Note that the function $\omega \in (0,\infty) \mapsto R_{\omega} \in H^{2}(\mathbb{R})$ is $C^{\infty}$. 
 Furthermore, let $L_{\omega,+,0}$ be the linearized operator around $R_{\omega}$ for the ODE \eqref{line-sp}, namely,  
\begin{equation}\label{21/11/6/10:2}
L_{\omega,+,0}:=-\frac{d^{2}}{dx^{2}} + \omega - p R_{\omega}^{p-1}.
\end{equation}  
Then, the following are known (see, e.g., Section 3 of \cite{Chang-Gustaf-Nakanishi-Tsai}):
\begin{enumerate}
\item     
\begin{equation}\label{21/10/24/11:29}
\sigma(L_{\omega,+,0})
=
\Big\{-\frac{\omega}{\omega_{p}}\Big\} \cup \{0\} \cup [0,\infty)
\quad 
\mbox{with}
\quad  
\omega_{p} := \frac{4}{(p-1)(p+3)}
.
\end{equation} 
\item 
All eigenvalues of $L_{\omega,+,0}$ are simple, and 
\begin{equation}\label{21/11/4/11:35}
L_{\omega,+,0} R_{\omega}^{\frac{p+1}{2}} = -\frac{\omega}{\omega_{p}} R_{\omega}^{\frac{p+1}{2}} ,
\qquad 
L_{\omega,+,0} \frac{dR_{\omega}}{dx} =0. 
\end{equation}
Note that $R_{\omega}^{\frac{p+1}{2}}$ is even and $\dfrac{dR_{\omega}}{dx}$ is odd. 
\end{enumerate}
By the Fourier series expansion with respect to the second variable $y$, we see that for any $\omega>0$ and $f\in H^{2}(\mathbb{R}\times \mathbb{T})$, 
\begin{equation}\label{21/10/12:8}
\partial_{u} \mathcal{F}(\omega, R_{\omega}) f 
= 
\sum_{n\in \mathbb{Z}} 
\big(L_{\omega,+,0}+ n^{2} \big) f_{n}(x) e^{iny}
\quad 
\mbox{with}~~ 
f_{n}(x):=\frac{1}{2\pi} \int_{0}^{2\pi} e^{-iny} f(x,y)  \,dy
.
\end{equation} 
Thus, we find from \eqref{21/10/24/11:29} and \eqref{21/10/12:8} that 
\begin{equation}\label{21/10/24/12:22}
\sigma(\partial_{u} \mathcal{F}(\omega, R_{\omega}))
=
\bigcup_{n\in \mathbb{Z}}\sigma(L_{\omega,+,0}+n^{2})
.
\end{equation}


The aim of this paper is to show that for any $p>1$, a ``pitchfork bifurcation'' occurs at the line soliton $R_{\omega_{p}}$ (see Theorem \ref{main-prop}). 
 Let us recall that the pair $(\omega_{p}, R_{\omega_{p}})$
 is called a bifurcation point for the equation $\mathcal{F}=0$ with respect to the curve $\omega  \mapsto (\omega, R_{\omega})$  if every neighborhood of $(\omega_{p}, R_{\omega_{p}})$ contains zeros of $\mathcal{F}$ not lying on the curve (see \cite{Crandall-Rabinowitz}). 
 The implicit function theorem shows that if $(\omega_{p}, R_{\omega_{p}})$ is the bifurcation point, then zero is an eigenvalue of $\partial_{u}\mathcal{F}(\omega_{p},R_{\omega_{p}})$. Since $\partial_{u}\mathcal{F}(\omega_{p},R_{\omega_{p}})$ is just the linearized operator around $R_{\omega_{p}}$ for \eqref{sp}, 
 we can say that a necessary condition for $(\omega_{p}, R_{\omega_{p}})$ to be the bifurcation point is that $R_{\omega_{p}}$ is degenerate. 
 In addition, by \eqref{21/10/24/11:29}, \eqref{21/11/4/11:35} and \eqref{21/10/24/12:22}, we see that if  $\omega=\omega_{p}$, then 
 the kernel of $\partial_{u} \mathcal{F}(\omega_{p},R_{\omega_{p}})$ contains the real-valued functions $\p_{x}R_{\omega_{p}}$, $R_{\omega_{p}}^{\frac{p+1}{2}}\cos{y}$ and $R_{\omega_{p}}^{\frac{p+1}{2}} \sin{y}$.  
 We introduce the spaces $L_{\rm{sym}}^{2}$ and $H_{\rm{sym}}^{2}$ as 
\begin{align*}
L^{2}_{\rm{sym}}&:=
\bigm\{u\in L^{2}(\R \times \T)
\colon u(x, y) = u(-x, y)=u(x, -y)
\bigm\},
\\[6pt]
H^{2}_{\rm{sym}}&:= H^{2}(\R \times \T) \cap L_{\rm{sym}}^{2}. 
\end{align*}
Note that $R_{\omega_{p}}^{\frac{p+1}{2}} \cos{y} \in H_{\rm{sym}}^{2}$, whereas $\partial_{x}R_{\omega_{p}}$ and $R_{\omega_{p}}^{\frac{p+1}{2}} \sin{y}$ fail to lie in $H_{\rm{sym}}^{2}$.
Thus, $(\omega_{p}, R_{\omega_{p}})$ is still a candidate for the the bifurcation point in the setting $H_{\rm{sym}}^{2}$. 
 Here, we remark that the information about branches bifurcating from $(\omega_{p},R_{\omega_{p}})$ is useful in the study of the stability of the degenerate line soliton $R_{\omega_{p}}$ (see \cite{Yamazaki2, KKP}). 


Now, we sate the main result of this paper. 
\begin{theorem}\label{main-prop}
For any $p >1$, the pair $(\omega_{p}, R_{\omega_{p}})$ is a pitchfork bifurcation point for $\mathcal{F}=0$ in $(0,\infty)\times H_{\rm{sym}}^{2}$  with respect to the curve $\omega \mapsto (\omega, R_{\omega})$; Precisely, there exist $a_{*}>0$, $\delta_{*}>0$ and a $C^{2}$-curve $a \in (-a_{*},a_{*}) \mapsto 
 (\omega(a), \varphias(a)) \in (\omega_{p}-\delta_{*},~\omega_{p}+\delta_{*}) \times H^{2}_{\rm{sym}}$ with the following properties:  

\begin{enumerate}

\item 
The set of zeros of $\mathcal{F}$ in 
$(\omega_{p}-\delta_{*}, \omega_{p}+\delta_{*})\times H_{\rm{sym}}^{2}$
 consists of two curves $\omega \in (0,\infty) \mapsto (\omega, R_{\omega})$ and $a \in (-a_{*},a_{*}) \mapsto (\omega(a), \varphias(a))$.

\item
The following hold for all $a\in (-a_{*},a_{*})$: 
\begin{equation}\label{21/11/10/14:59}
\omega(0)=\omega_{p},
\qquad 
\dfrac{d\omega}{da}(0)=0
,
\end{equation}
\begin{equation}\label{main-omega1}
\begin{split}
\frac{d^{2} \omega}{da^{2}}(0)
&=
\omega_{p}  \{ p(p-1)\}^{2}
\langle 
\mathbf{T}(\omega_{p}, 0)^{-1}
\big(
R_{\omega_{p}}^{p-2}
\{ \psi_{\omega_{p}}\cos{y} \}^{2}
\big)
,~ 
R_{\omega_{p}}^{p-2} 
\{ \psi_{\omega_{p}}\cos{y}\}^{2} 
\rangle 
\\[6pt]
&\quad 
+\frac{p(p-1)(p-2)\omega_{p}}{3}
\langle 
R_{\omega_{p}}^{p-3} 
,~ 
\{\psi_{\omega_{p}}\cos{y}\}^{4}
\rangle 
,
\end{split} 
\end{equation}
where $\mathbf{T}(\omega_{p}, 0)
:=
P_{\perp} \partial_{u}\mathcal{F}(\omega_{p},R_{\omega_{p}})|_{X_{2}}$; Note that $\mathbf{T}(\omega_{p}, 0)\colon X_{2}\to Y_{2}$ is a bijection.


\item
For any $a\in (-a_{*},a_{*})$, $\varphias(a)$ is positive and written as  
\begin{align}\label{expan-phi}
\varphias(a) = R_{\omega_{p}} + 
a  
C_{p} R_{\omega_{p}}^{\frac{p+1}{2}} \cos y 
+ 
O(a^{2}) \qquad 
\mbox{in $H^{2}(\R \times \T)$}, 
\end{align}
where $C_{p} > 0$ is a normalizing constant chosen such that   
\begin{equation}\label{L2-norm11}
\|C_{p} R_{\omega_{p}}^{\frac{p+1}{2}} \cos{y} 
\|_{L^{2}(\R \times \T)}^{2} 
=1
.
\end{equation}
In particular, $\varphias(0)=R_{\omega_p}$. 
 Furthermore, the mass of $\varphias(a)$ is written as 
\begin{equation}\label{eq1-14}
\begin{split}
&
\|\varphias(a)\|_{L^{2}(\R \times \T)}^{2} 
\\[6pt]
&= 
\pi \|R_{\omega_{p}}\|_{L^{2}(\R)}^{2} 
+\frac{a^{2}}{ \omega_{p}} 
\Big\{ 
\frac{d^{2}\omega}{da^{2}}(0)
\frac{5-p}{4(p-1)}  
\|R_{\omega_{p}}\|_{L^{2}(\R)}^{2} 
-1 
\Big\} 
+ o(a^{2})
.
\end{split} 
\end{equation}
\end{enumerate} 
\end{theorem}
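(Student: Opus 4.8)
The plan is to realize Theorem~\ref{main-prop} as a Crandall--Rabinowitz bifurcation in the symmetric space $H^2_{\rm sym}$ and to extract the quantitative claims~(2)--(3) from a Lyapunov--Schmidt expansion pushed to third order. Write $L:=\p_u\mathcal F(\omega_p,R_{\omega_p})$ and let $\Theta:=C_p R_{\omega_p}^{\frac{p+1}{2}}\cos y=\psi_{\omega_p}\cos y$ be the $L^2$-normalized element of \eqref{L2-norm11}. On $L^2_{\rm sym}$ the operator $L$ is self-adjoint, and by \eqref{21/10/24/11:29}--\eqref{21/10/24/12:22} together with the parity constraints built into $L^2_{\rm sym}$ (which discard the kernel directions $\p_x R_{\omega_p}$ and $R_{\omega_p}^{\frac{p+1}{2}}\sin y$), its kernel in $H^2_{\rm sym}$ is exactly $\Span\{\Theta\}$ and its range is the closed subspace $Y_2=\{\Theta\}^{\perp}$ of $L^2_{\rm sym}$. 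With $X_2:=H^2_{\rm sym}\cap Y_2$ and $P_\perp$ the orthogonal projection onto $Y_2$, one verifies $L(X_2)\subset Y_2$ and that $\mathbf T(\omega_p,0)=P_\perp L|_{X_2}\colon X_2\to Y_2$ is a bounded bijection with bounded inverse; this is the statement attached to \eqref{main-omega1}, and $\mathbf T(\omega_p,0)^{-1}$ is the object on which the later formulas rest.

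I would then shift to $v:=u-R_\omega$ and set $\widetilde{\mathcal F}(\omega,v):=\mathcal F(\omega,R_\omega+v)$, so the line-soliton branch becomes the trivial branch $v\equiv0$ with $\p_\omega\widetilde{\mathcal F}(\omega_p,0)=0$ and $\p_v\widetilde{\mathcal F}(\omega_p,0)=L$. Writing $v=a\Theta+w$ with $w\in X_2$ and applying $P_\perp$ to $\widetilde{\mathcal F}=0$, the bijectivity of $\mathbf T(\omega_p,0)$ lets the implicit function theorem solve this auxiliary (range) equation for $w=w(a,\omega)\in X_2$ near $(0,\omega_p)$ with $w(0,\omega_p)=0$; substituting back reduces the problem to the scalar bifurcation equation $g(a,\omega):=\langle\widetilde{\mathcal F}(\omega,a\Theta+w(a,\omega)),\Theta\rangle=0$. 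The transversality needed for a genuine branch is $\langle\p_\omega\p_v\widetilde{\mathcal F}(\omega_p,0)\Theta,\Theta\rangle\ne0$; differentiating the exact eigenrelation $\p_u\mathcal F(\omega,R_\omega)\big(R_\omega^{\frac{p+1}{2}}\cos y\big)=\big(1-\tfrac{\omega}{\omega_p}\big)R_\omega^{\frac{p+1}{2}}\cos y$ at $\omega=\omega_p$ and testing against $\Theta$ evaluates this quantity to $-\omega_p^{-1}\ne0$. The Crandall--Rabinowitz theorem then yields the $C^2$ curve $a\mapsto(\omega(a),\varphias(a))$ and the description in claim~(1) that the zero set near $(\omega_p,R_{\omega_p})$ is exactly the union of this curve and $\omega\mapsto(\omega,R_\omega)$.

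The pitchfork shape is forced by a discrete symmetry rather than a generic second-order computation. Let $\tau u(x,y):=u(x,y+\pi)$; since \eqref{sp} is autonomous in $y$, $\mathcal F$ is $\tau$-equivariant, $\tau$ preserves $H^2_{\rm sym}$, $\tau R_\omega=R_\omega$ and $\tau\Theta=-\Theta$. Hence $g(-a,\omega)=-g(a,\omega)$, so $g(a,\omega)=a\,h(a^2,\omega)$ with $h$ smooth, and solving $h(a^2,\omega)=0$ for $\omega$ via the implicit function theorem (using the same transversality constant) yields $\omega=\omega(a)$ even in $a$, i.e.\ \eqref{21/11/10/14:59}. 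Analytically this is the vanishing of the quadratic coefficient $\langle\p_u^2\mathcal F(\omega_p,R_{\omega_p})[\Theta,\Theta],\Theta\rangle$ because $\int_0^{2\pi}\cos^3y\,dy=0$. To reach \eqref{main-omega1} I expand $v(a)=a\Theta+a^2v_2+a^3v_3+O(a^4)$, $\omega(a)=\omega_p+a^2\omega_2+O(a^4)$ and match powers in $\widetilde{\mathcal F}(\omega(a),v(a))=0$, using $\p_u^2\mathcal F[\,\cdot\,,\,\cdot\,]=-p(p-1)R_{\omega_p}^{p-2}(\,\cdot\,)(\,\cdot\,)$ and $\p_u^3\mathcal F[\cdots]=-p(p-1)(p-2)R_{\omega_p}^{p-3}(\cdots)$: the $a^2$-equation $Lv_2=\tfrac12 p(p-1)R_{\omega_p}^{p-2}\{\psi_{\omega_p}\cos y\}^2$ is solvable (its right side lies in $Y_2$ precisely because $\int\cos^3y\,dy=0$) and gives $v_2=\tfrac12 p(p-1)\mathbf T(\omega_p,0)^{-1}\big(R_{\omega_p}^{p-2}\{\psi_{\omega_p}\cos y\}^2\big)$, while the $\Theta$-projection of the $a^3$-equation is the solvability condition which, after dividing by the transversality constant $-\omega_p^{-1}$, assembles the two inner products in \eqref{main-omega1}. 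Claim~(3) then follows: \eqref{expan-phi} is the leading order of $\varphias(a)=R_{\omega_p}+a\Theta+O(a^2)$, with positivity for small $|a|$ coming from $R_{\omega_p}>0$ and the smallness of the correction, and \eqref{eq1-14} results from expanding $\|\varphias(a)\|_{L^2}^2$ to order $a^2$, using $\langle R_{\omega_p},\Theta\rangle=0$, the identity $L_{\omega_p,+,0}\,\p_\omega R_{\omega_p}=-R_{\omega_p}$ (from differentiating \eqref{line-sp} in $\omega$) to rewrite $\langle R_{\omega_p},v_2\rangle$, and the scaling $\|R_\omega\|_{L^2(\R)}^2\propto\omega^{\frac{5-p}{2(p-1)}}$ that produces the factor $\tfrac{5-p}{p-1}$.

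The delicate point---the one the abstract isolates---is the control of the auxiliary equation. For the clean formula to hold for every $p>1$ one must certify that $v_2=\tfrac12 p(p-1)\mathbf T(\omega_p,0)^{-1}\big(R_{\omega_p}^{p-2}\{\psi_{\omega_p}\cos y\}^2\big)$ is genuinely the $X_2$-solution, and for the positivity of $\varphias(a)$ and the sign analysis underlying the stability applications one needs a quantitative lower bound on this $w$-component. This is more delicate than for the special exponents treated previously because, for general $p$, the weights $R_{\omega_p}^{p-2}$ and $R_{\omega_p}^{p-3}$ are not captured by the explicit eigenfunction identities \eqref{21/11/4/11:35}, so the reduction does not close algebraically. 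I would handle this by expanding in the $y$-Fourier series---only the modes $n=0$ and $n=2$ enter $v_2$---and reducing $\mathbf T(\omega_p,0)^{-1}$ to solving the two ordinary differential equations governed by $L_{\omega_p,+,0}+n^2$ on the appropriate parity sectors, whose invertibility is guaranteed by \eqref{21/10/24/11:29}; the requisite lower bound would then come from a maximum-principle/positivity analysis of these one-dimensional resolvent problems.
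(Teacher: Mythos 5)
Your proposal follows the same skeleton as the paper: the identical Lyapunov--Schmidt decomposition $u=R_{\omega_{p}}+a\,\psi_{\omega_{p}}\cos y+h$ with $h\in X_{2}$, reduction to a scalar bifurcation equation, the same transversality constant $-1/\omega_{p}$ (which the paper computes in \eqref{21/11/1/10:35}, and which you obtain more cleanly by differentiating the eigenrelation \eqref{21/11/4/11:58} in $\omega$), and the same formal second- and third-order coefficients. Your use of the translation $\tau u(x,y)=u(x,y+\pi)$ to force oddness of the reduced function, hence the pitchfork shape, is a genuinely nicer route to $\frac{d\omega}{da}(0)=0$ than the paper's repeated appeals to $\int_{\T}\cos^{3}y\,dy=0$.

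There is, however, a genuine gap, and it sits exactly at the point the paper identifies as its contribution (Remark \ref{21/8/8/15:47}): for $1<p<2$ the map $u\mapsto|u|^{p-1}u$ is only $C^{1}$, so (i) the classical Crandall--Rabinowitz theorem, which you invoke to ``yield the $C^{2}$ curve,'' does not apply as stated; (ii) the implicit function theorem produces only a $C^{1}$ function $\eta(\omega,a)$, so your expansions $v(a)=a\Theta+a^{2}v_{2}+a^{3}v_{3}+O(a^{4})$ and $\omega(a)=\omega_{p}+a^{2}\omega_{2}+O(a^{4})$, and the matching of powers of $a$, presuppose precisely the second- and third-order differentiability that is in question; and (iii) even your symmetry factorization $g(a,\omega)=a\,h(a^{2},\omega)$ requires $g$ to be at least $C^{2}$ before $h$ is $C^{1}$ in $s=a^{2}$ and the implicit function theorem can be applied to it. You do flag ``the delicate point,'' but your proposed remedy --- a $y$-Fourier reduction of $\mathbf{T}(\omega_{p},0)^{-1}$ to one-dimensional resolvent problems plus a maximum-principle lower bound for those problems --- addresses invertibility and positivity of the reduced operator at the single point $(\omega_{p},0)$, not the differentiability problem. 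What the paper actually needs, and proves, is a uniform pointwise exponential \emph{lower} bound on the full auxiliary solution $\varphi(\omega,a)$ itself (Proposition \ref{lem-up-low-b}, via a Berestycki--Nirenberg maximum-principle argument), together with exponential decay of $\partial_{\omega}\varphi$ and $\partial_{a}\varphi$ (Proposition \ref{c1-uni-decay}). These tame the singular weights $V_{2}=p(p-1)\varphi^{p-2}$ and $V_{3}=p(p-1)(p-2)\varphi^{p-3}$ that appear in every differentiation of the nonlinearity, and they feed the difference-quotient arguments of Section \ref{sec-regularlity} showing that $\varphi$ and $\mathcal{F}_{\parallel}$ are in fact $C^{3}$ in $(\omega,a)$ (Propositions \ref{main-lem-1}, \ref{main-lem-3} and \ref{3d-f-para}); only then is the quotient $g=\mathcal{F}_{\parallel}/a$ of class $C^{2}$ (Lemma \ref{g-pro}) and the Crandall--Rabinowitz scheme legitimate for every $p>1$. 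Without this step, your argument establishes the theorem only when the nonlinearity already has the requisite smoothness, i.e.\ essentially the regime treated in \cite{Yamazaki2}, and misses the case $1<p<2$ that the theorem is designed to cover.
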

\begin{remark}\label{21/8/8/15:47}
When $p\ge 2$, the same result as in Theorem \ref{main-prop} had been obtained in  \cite{Yamazaki2}. The salient point of our result is that we can treat all $p>1$; When $1<p<2$, the twice differentiability of the curve $(\omega(a),Q(a))$ is not obvious because the nonlinearity is not twice differentiable. 
\end{remark}

We give a proof of Theorem \ref{main-prop} in Section \ref{21/10/27/11:47}. In order to prove the theorem, we derive a lower bound of the solution to the ``auxiliary equation'' (see \eqref{au-eq}) first, 
 and then apply the Crandall-Rabinowitz argument \cite{Crandall-Rabinowitz}.  Such an approach enables us to treat the case $1<p<2$.

At the end of this section, we give a basic properties of line solitons to \eqref{sp}. It follows from the definition of the line solitons (see \eqref{Rw:explicit}) that   
\begin{align}
\label{21/11/3/17:9}
R_{\omega}(x)
&= \omega^{\frac{1}{p-1}} R_{1}(\sqrt{\omega}x),
\\[6pt] 
\label{21/11/5/17:4}
\p_{\omega}R_{\omega}
&=
\frac{1}{p-1}\omega^{-1}R_{\omega}
+
\frac{1}{2}\omega^{-1}x \frac{dR_{\omega}}{dx},
\\[6pt]
\label{decay-R}
\omega^{\frac{1}{p-1}}e^{-\sqrt{\omega}|x|} 
&\le 
R_{\omega}(x) 
\le
\{ 2(p+1)\omega \}^{\frac{1}{p-1}} e^{- \sqrt{\omega} |x|} 
.
\end{align}
Differentiating both sides of $\mathcal{F}(\omega, R_{\omega})=0$ with respect to $\omega$, we see that 
\begin{equation}\label{21/11/7/15:51}
\partial_{u}\mathcal{F}(\omega, R_{\omega}) \p_{\omega}R_{\omega}=-R_{\omega}
. 
\end{equation}

Furthermore, by \eqref{21/11/5/17:4} and the integration by parts,  
 we can verify that  
\begin{equation}\label{21/11/6/10:6}
\int_{\mathbb{R}} 
R_{\omega}^{q} \partial_{\omega}R_{\omega} 
= 
\frac{2q-p+3}{2(p-1)(q+1)}
 \omega^{-1} 
\int_{\mathbb{R}} 
R_{\omega}^{q+1} 
\qquad 
\mbox{for all $q\ge 1$}
.  
\end{equation}

We can also derive the following equation (see Lemma 2.2 of \cite{Yamazaki2}): 
\begin{equation}\label{21/11/6/14:34}
\int_{\R}
R_{\omega}^{p+r}
=
\frac{(p+1)(r+1)} {2r+p+1}
\omega
\int_{\mathbb{R}} 
R_{\omega}^{r+1} 
\qquad 
\mbox{for all $r>1$}
.
\end{equation}

The rest of this paper is 
organized as follows. 
 In Section \ref{21/11/14/11:30}, 
 we apply the Lyapunov-Schmidt method and introduce the auxiliary and bifurcation equations (see \eqref{au-eq} and \eqref{bifur-eq}). 
 In Section \ref{21/11/14/12:12}, we derive a lower bound and  decay estimates for solutions to the auxiliary equation. 
In Section \ref{sec-regularlity}, we show the three times differentiability of solutions to the auxiliary equation with respect to certain parameters.  
 In Section \ref{21/10/27/11:47}, we give a proof of Theorem \ref{main-prop}. 



\section{Lyapunov-Schmidt method}\label{21/11/14/11:30}

In order to construct a bifurcation branch of $\mathcal{F}=0$ from $(\omega_{p},R_{\omega_{p}})$ in $(0,\infty)\times H^{2}_{\rm{sym}}(\R \times \T)$, we employ the Lyapunov-Schmidt 
method.  Let us begin by introducing a few symbols: 
\begin{notation}
\begin{enumerate}
\item  
For $\omega>0$, define $\psi_{\omega}$ as 
\begin{equation}\label{eq1-10}
\psi_{\omega}:= \dfrac{R_{\omega}^{\frac{p+1}{2}}}{\| R_{\omega}^{\frac{p+1}{2}} \cos{y} \|_{L^{2}(\R\times \T)}}
=
\dfrac{R_{\omega}^{\frac{p+1}{2}}}{\sqrt{\pi}\| R_{\omega}^{\frac{p+1}{2}}\|_{L^{2}(\R)} 
}
.
\end{equation}
Observe from \eqref{decay-R} that 
\begin{equation} \label{eq1-11}
|\psi_{\omega_{p}}(x)| \lesssim 
e^{- \frac{p+1}{2} \sqrt{\omega} |x|}
\qquad \mbox{for all $x \in \mathbb{R}$}, 
\end{equation}
where the implicit constant depends only on $p$.
\item 
For $p>1$ and $\omega>0$, let $\lambda(\omega)$ denote the second eigenvalue of the operator $\partial_{u}\mathcal{F}(\omega, R_{\omega})$ restricted to $H_{\rm{sym}}^{2}$.  
 Note that $\partial_{u}\mathcal{F}(\omega, R_{\omega})|_{H^{2}_{\rm{sym}}}$ is a sefl-adjoint operator on $L^{2}(\R \times \T)$ with the values in $L^{2}_{\rm{sym}}$.
By \eqref{21/10/24/12:22}, \eqref{21/10/24/11:29}, \eqref{21/11/4/11:35} and \eqref{eq1-10}, we see that   
\begin{equation}
\label{21/11/4/11:58}
\lambda(\omega)=1-\frac{\omega}{\omega_{p}},
\quad 
\partial_{u}\mathcal{F}(\omega, R_{\omega})|_{H_{\rm{sym}}^{2}}
\psi_{\omega} \cos{y}
=
\lambda(\omega)  
\psi_{\omega} \cos{y} 
\qquad 
\mbox{for all $\omega >0$}
.
\end{equation}
Furthermore, we see that 
\begin{align}
\label{eq1-12}
&
\Ker \partial_{u}\mathcal{F}
(\omega_{p}, R_{\omega_{p}}) |_{H^{2}_{\rm{sym}}} 
= 
\operatorname{span}{\{ 
\psi_{\omega_{p}} \cos{y}
\}}, 
\\[6pt]
\label{21/11/4/13:17}
&
\frac{d\lambda}{d\omega}(\omega)
=
-\frac{1}{\omega_{p}}
\qquad
\mbox{for all $\omega >0$}
. 
\end{align}

\item 
We use $L_{\rm{real}}^{2}(\R \times \T)$ to denote the real Hilbert space of square integralbe functions on $\R\times \T$ equipped with the inner product
\[
\langle u, v \rangle 
:=
\int_{\mathbb{R}^{d} \times [0,2\pi]} 
u(x,y) v(x,y) \,dxdy .
\]

\item 
We define the spaces $X_{2}$ and $Y_{2}$ as  
\begin{align*}
X_{2} &:=
\{
u \in H^{2}_{\rm{sym}}
(\R \times \T) \colon 
\langle u, \psi_{\omega_{p}} \cos y \rangle 
= 0
\}, 
\\[6pt]
Y_{2} &:= 
\{
u \in L^{2}_{\rm{sym}}
(\R \times \T) \colon 
\langle u, \psi_{\omega_{p}} \cos y \rangle 
= 0
\}. 
\end{align*}
Note that $X_{2}\subset Y_{2}$. 
 Since the line solitons are independent of $y$, 
 it is easy to verify that  
\begin{equation}\label{21/7/21/16:37}
R_{\omega}, \p_{\omega}R_{\omega} \in X_{2} 
\qquad 
\mbox{for all $p>1$ and $\omega>0$}
. 
\end{equation}
\end{enumerate}
\end{notation}

We look for solutions to the equation $\mathcal{F}=0$ of the form $(\omega,u)=(\omega_{p}+\delta, R_{\omega_{p}}+a \psi_{\omega_{p}}\cos{y}+h)$ with $\delta>0$, $a\in \R$ and $h\in H_{\rm{sym}}^{2}$. 
 To this end, we consider the orthogonal projection $P_{\perp}$ from $L^{2}(\R \times \T)$ onto $Y_{2}$:   
\begin{equation}\label{21/8/5/17:39}
P_{\perp} u := u - \langle u, \psi_{\omega_{p}} \cos y \rangle 
\psi_{\omega_{p}} \cos y ,
\end{equation} 
Then, we define the function $\mathcal{F}_{\perp} \colon (0, \infty) \times (-1,1) \times H_{\rm{sym}}^{2} 
 \to Y_{2}$ as  
\begin{equation}\label{21/8/5/14:17}
\mathcal{F}_\perp(\omega, a, h)
:= P_\perp \mathcal{F}
(\omega, 
R_{\omega_{p}} + a \psi_{\omega_{p}} 
\cos y + h) 
. 
\end{equation}
Note that 
\begin{equation}\label{21/8/5/14:19}
\mathcal{F}_{\perp}
(\omega,~0,~R_{\omega} - R_{\omega_{p}}) 
=
P_\perp \mathcal{F}
(\omega, 
R_{\omega}) 
= 0
. 
\end{equation}
Note that \eqref{eq1-12} shows that 
\begin{equation}\label{21/7/24/16:40}
\Ker \partial_{u}\mathcal{F}
(\omega_{p}, R_{\omega_{p}}) |_{X_{2}} 
=\{0\} .
\end{equation} 
Furthermore, by the decay of $R_{\omega}$ (see \eqref{decay-R}), \eqref{21/7/24/16:40} and the Fredholm alternative theorem, we see that 
\begin{equation}\label{orthgo-1}
\operatorname{Ran}{\partial_{u}\mathcal{F}
(\omega_{p}, R_{\omega_{p}}) |_{X_{2}}}
= Y_{2}.  
\end{equation}
By \eqref{21/7/24/16:40} and \eqref{orthgo-1}, we see that 
$\partial_{u}\mathcal{F}(\omega_{p}, R_{\omega_{p}}) \colon X_{2} \to Y_{2}$ is bijective.

Using the implicit function theorem, we can find 
 solutions to $\mathcal{F}_{\perp}(\omega,a,h)=0$:
\begin{lemma}\label{main-lem-2}
Assume $p>1$. Then, there exist $a_{0}>0$, $\delta_{0} > 0$, $r_{0}>0$  
and a $C^1$-function  
$\eta \colon 
(\omega_{p} - \delta_{0},~\omega_{p}+ \delta_{0}) 
\times (-a_{0}, a_{0}) \to X_{2}$ such that the following hold for all $(\omega, a)\in (\omega_{p} - \delta_{0},~\omega_{p}+ \delta_{0}) \times (-a_{0}, a_{0})$:
\begin{enumerate}
\item Let $h \in X_{2}$. Then, $\mathcal{F}_{\perp}
(\omega, a, h) = 0$~if and only if~$h = \eta(\omega, a)$. 

\item  
$\|\eta(\omega,a)\|_{H^{2}(\R \times \T)} < r_{0}$. 

\item Define $\varphi(\omega,a)$ as 
\begin{equation}\label{eq2-10}
\varphi(\omega, a)
:= 
R_{\omega_{p}} + a \psi_{\omega_{p}} \cos y + \eta(\omega, a). 
\end{equation}
Then, the following hold: 
\begin{align}
\label{21/11/5/10:6}
\partial_{\omega} \eta(\omega,a)
&=
-
\big( P_{\perp} \partial_{u} \mathcal{F}(\omega, \varphi(\omega,a))|_{X_{2}} \big)^{-1}
P_{\perp} \varphi(\omega,a)
,
\\[6pt]
\label{21/11/5/11:11} 
\partial_{a} \eta(\omega,a)
&=
-\big( P_{\perp} \partial_{u} \mathcal{F}(\omega, \varphi(\omega,a))|_{X_{2}} \big)^{-1}
P_{\perp} \partial_{u} \mathcal{F}(\omega, \varphi(\omega,a))
\psi_{\omega_{p}}\cos{y}
. 
\end{align} 
\end{enumerate}
\end{lemma}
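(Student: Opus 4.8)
The plan is to obtain $\eta$ by a direct application of the implicit function theorem to the $C^{1}$ map $\mathcal{F}_{\perp}$, viewed as a function of $h$ ranging over the closed subspace $X_{2}$, around the base point $(\omega_{p},0,0)$. First I would record that $\mathcal{F}$, and hence $\mathcal{F}_{\perp}$, is a $C^{1}$ map on $(0,\infty)\times H^{2}_{\rm sym}(\R\times\T)$. The only term requiring attention is the Nemytskii operator $u\mapsto |u|^{p-1}u$, and this is the one place where the case $1<p<2$ must be checked. Using the two-dimensional Sobolev embedding $H^{2}(\R\times\T)\hookrightarrow L^{\infty}(\R\times\T)$ together with the elementary Hölder bound $\big||s|^{p-1}-|t|^{p-1}\big|\le |s-t|^{p-1}$, one controls the first-order Taylor remainder by $\|v\|_{L^{\infty}}^{p-1}\|v\|_{L^{2}}\lesssim \|v\|_{H^{2}}^{p}=o(\|v\|_{H^{2}})$; this shows that $u\mapsto |u|^{p-1}u$ is Fréchet differentiable from $H^{2}$ to $L^{2}$ with derivative the multiplication operator $v\mapsto p|u|^{p-1}v$, and the same Hölder bound gives norm-continuity of $u\mapsto p|u|^{p-1}$. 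Thus $\mathcal{F}_{\perp}$ is $C^{1}$ for every $p>1$, with $\partial_{u}\mathcal{F}(\omega,u)v=-\p_{x}^{2}v-\p_{y}^{2}v+\omega v-p|u|^{p-1}v$.

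Next I would verify the hypotheses of the implicit function theorem at $(\omega_{p},0,0)$. The base point lies on the zero set by \eqref{21/8/5/14:19}, since $\mathcal{F}_{\perp}(\omega_{p},0,0)=P_{\perp}\mathcal{F}(\omega_{p},R_{\omega_{p}})=0$. The partial derivative in $h$ at this point, restricted to $h\in X_{2}$, is $P_{\perp}\partial_{u}\mathcal{F}(\omega_{p},R_{\omega_{p}})|_{X_{2}}\colon X_{2}\to Y_{2}$; because the range of $\partial_{u}\mathcal{F}(\omega_{p},R_{\omega_{p}})|_{X_{2}}$ already lies in $Y_{2}$ by \eqref{orthgo-1}, the projection $P_{\perp}$ acts as the identity on it, so this operator coincides with the bijection $X_{2}\to Y_{2}$ furnished by \eqref{21/7/24/16:40} and \eqref{orthgo-1}. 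The implicit function theorem then yields $\delta_{0},a_{0}>0$ and a $C^{1}$ map $\eta$ into $X_{2}$ with $\eta(\omega_{p},0)=0$, solving $\mathcal{F}_{\perp}(\omega,a,\eta(\omega,a))=0$ and unique among $h\in X_{2}$ in a neighborhood of $0$; taking $r_{0}$ to be the radius of that neighborhood and shrinking $\delta_{0},a_{0}$ gives assertions (1) and (2).

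Finally, for the derivative formulas I would differentiate the identity $\mathcal{F}_{\perp}(\omega,a,\eta(\omega,a))=P_{\perp}\mathcal{F}(\omega,\varphi(\omega,a))=0$ in $\omega$ and in $a$, using $\partial_{\omega}\mathcal{F}(\omega,u)=u$ for the explicit $\omega$-dependence and $\partial_{a}\varphi=\psi_{\omega_{p}}\cos y+\partial_{a}\eta$, while $\partial_{\omega}\varphi=\partial_{\omega}\eta$ since $R_{\omega_{p}}$ and $\psi_{\omega_{p}}$ are frozen at $\omega_{p}$. Differentiation in $\omega$ gives $P_{\perp}\varphi+P_{\perp}\partial_{u}\mathcal{F}(\omega,\varphi)\,\partial_{\omega}\eta=0$, and differentiation in $a$ gives $P_{\perp}\partial_{u}\mathcal{F}(\omega,\varphi)\psi_{\omega_{p}}\cos y+P_{\perp}\partial_{u}\mathcal{F}(\omega,\varphi)\,\partial_{a}\eta=0$; since $\partial_{\omega}\eta,\partial_{a}\eta\in X_{2}$, inverting $P_{\perp}\partial_{u}\mathcal{F}(\omega,\varphi(\omega,a))|_{X_{2}}$ yields \eqref{21/11/5/10:6} and \eqref{21/11/5/11:11}. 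That this operator remains invertible throughout $(\omega_{p}-\delta_{0},\omega_{p}+\delta_{0})\times(-a_{0},a_{0})$ follows from its continuous dependence on $(\omega,a)$ in operator norm (again via the Hölder bound and the $C^{1}$-regularity of $\eta$) together with the openness of the set of isomorphisms, after shrinking $\delta_{0},a_{0}$ if necessary. The one point demanding genuine care is the $C^{1}$-regularity for $1<p<2$ recorded in the first step; the higher-order smoothness that fails for such $p$ is not needed here and is exactly what the later auxiliary-equation analysis is designed to circumvent.
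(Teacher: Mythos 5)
Your proposal is correct and follows essentially the same route as the paper, which simply invokes the implicit function theorem at $(\omega_{p},0,0)$ after establishing that $\partial_{u}\mathcal{F}(\omega_{p},R_{\omega_{p}})\colon X_{2}\to Y_{2}$ is bijective via \eqref{21/7/24/16:40} and \eqref{orthgo-1}, and obtains \eqref{21/11/5/10:6}--\eqref{21/11/5/11:11} by implicit differentiation. Your additional verification that the Nemytskii operator is $C^{1}$ (but no better) from $H^{2}$ to $L^{2}$ for $1<p<2$, via $H^{2}\hookrightarrow L^{\infty}$ and the H\"older bound $\bigl||s|^{p-1}-|t|^{p-1}\bigr|\le|s-t|^{p-1}$, is exactly the point the paper leaves implicit and flags in Remark \ref{21/8/15/18:6} as the reason $\eta$ is only asserted to be $C^{1}$.
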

\begin{remark}\label{21/8/15/18:6}
\begin{enumerate} 

\item 
Lemma \ref{main-lem-2} only states that the function $\eta$ is $C^{1}$ since the assumption includes the case $1<p< 2$. When $p\ge 2$, we can prove that $\eta$ is $C^{2}$. 

\item 
Since $\eta$ is $C^{1}$ with respect to $(\omega, a)$ in $H^{2}(\R \times \T)$, replacing $\delta_{0}$ by $\dfrac{\delta_{0}}{2}$ and $a_{0}$ with $\dfrac{a_{0}}{2}$ if necessary, we may assume that 
\begin{equation}\label{21/10/29/9:59}
\sup_{\omega \in (\omega_{p}-\delta_{0}, \omega_{p}+\delta_{0})}
\sup_{a\in (-a_{0}, a_{0})}
\big\{ \|\partial_{\omega}\eta(\omega,a)\|_{H^{2}(\R\times \T)}
+ 
\|\partial_{a}\eta(\omega,a)\|_{H^{2}(\R\times \T)} \big\} 
\lesssim 1,
\end{equation}
where the implicit  constant depends only on $p$.
\end{enumerate}
\end{remark}

Let $a_{0}>0$, $\delta_{0} > 0$, $r_{0}>0$, $\eta$ and $\varphi$ be the same as in Lemma \ref{main-lem-2}. 
 By \eqref{21/7/21/16:37}, \eqref{21/8/5/14:19}, Lemma \ref{main-lem-2}, we see that 
\begin{equation}\label{phi-a=01}
\eta(\omega, 0) = R_{\omega} - R_{\omega_{p}}
\quad 
\mbox{for all $\omega \in (\omega_{p} - \delta_{0},~\omega_{p}+ \delta_{0})$}.
\end{equation}
In particular, we see that 
\begin{equation}\label{phi-a=0}
\eta(\omega_{p}, 0) =0, 
\qquad 
\varphi(\omega, 0) =R_{\omega},
\qquad 
\p_{\omega}\eta(\omega,0)
=
\p_{\omega}\varphi(\omega,0)
=
\p_{\omega}R_{\omega}
.
\end{equation}
We can also verify that 
\begin{equation}\label{21/8/1/15:1}
\p_{a} \eta(\omega_{p}, 0) = 0,
\qquad 
\p_{a} \phi(\omega_{p},0)=\psi_{\omega_{p}}\cos{y}
. 
\end{equation}  

Now, we introduce the function $\mathcal{F}_{\parallel}\colon 
(\omega_{p} - \delta_{0},~\omega_{p}+ \delta_{0}) \times (-a_{0}, a_{0}) 
\to \mathbb{R}$ as 
\begin{equation}\label{F-para}
\mathcal{F}_{\parallel} (\omega, a)
:= 
\langle  
\mathcal{F}(\omega, \varphi(\omega, a)),~
\psi_{\omega_{p}} \cos{y} 
\rangle
. 
\end{equation}
Then, we may write the equation 
 $\mathcal{F}(\omega, R_{\omega_{p}}+a \psi_{\omega_{p}}\cos{y}+h) = 0$ as follows:
\begin{numcases}{}
\mathcal{F}_{\perp}(\omega,a, h) = 0, \label{au-eq}
\\
\mathcal{F}_{\parallel}(\omega,a)= 0.  \label{bifur-eq}
\end{numcases}
The first equation \eqref{au-eq} is called the auxiliary equation and the second one \eqref{bifur-eq} the bifurcation equation. 
 Lemma \ref{main-lem-2} shows that  $(\omega,a,\eta(\omega,a))$ is a solution to the auxiliary equation \eqref{au-eq} for all $(\omega, a)\in (\omega_{p} - \delta_{0},~\omega_{p}+ \delta_{0}) \times (-a_{0}, a_{0})$. 
 Observe from \eqref{phi-a=0}, 
\eqref{F-para} and 
$\mathcal{F}(\omega, R_{\omega}) = 0$ that 
\begin{equation} \label{eq2-12}
\mathcal{F}_{\parallel}(\omega, 0) = 0 
.
\end{equation}


\section{Lower bound and decay of solution to the auxiliary equation}
\label{21/11/14/12:12}

Throughout this section, for a given $p>1$, we use $a_{0}$, $\delta_{0}$ and $\eta \colon (\omega_{p}-\delta_{0}, \omega_{p}+\delta_{0}) \times (-a_{0},a_{0}) \to X_{2}$ provided by Lemma \ref{main-lem-2}. 
 Furthermore,  let $\varphi(\omega,a)$ be the function defined by \eqref{eq2-10}, namely, $\varphi(\omega,a)=R_{\omega_{p}}+a \psi_{\omega_{p}}\cos{y}+ \eta(\omega,a)$.

Our aim in this section is to show the following propositions: 
\begin{proposition}\label{lem-up-low-b}
Assume $p > 1$. Then, there exists $\varepsilon_{1} > 0$ depending 
 only on $p$ with the following property: for any $0<\varepsilon <\varepsilon_{1}$, there exist $0< a_{\varepsilon} < a_{0}$ and $C_{1}(\varepsilon) 
> 1$ depending only on $p$ and $\varepsilon$ such that 
 for any $(\omega, a, x, y) 
\in (\omega_{p} - \delta_{0}, \omega_{p} 
+ \delta_{0}) \times 
(-a_{\varepsilon}, a_{\varepsilon}) \times \R \times \T$, 
\begin{equation}
\label{c-e-de}
\frac{1}{C_{1}(\varepsilon)} 
e^{-(\sqrt{\omega}+\varepsilon)|x|} \leq 
\varphi(\omega, a, x, y) 
\le C_{1}(\varepsilon) 
e^{-(\sqrt{\omega}-\varepsilon)|x|}
.
\end{equation}
In particular,  $\varphi(\omega, a)$ is positive. 
\end{proposition}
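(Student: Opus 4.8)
The plan is to recast the conclusion as a two-sided pointwise bound for the genuine elliptic PDE solved by $\varphi(\omega,a)$ and then run separate barrier/comparison arguments for the upper and lower estimates. First I would record the equation for $\varphi:=\varphi(\omega,a)$. Since $\mathcal{F}_\perp(\omega,a,\eta(\omega,a))=0$ means $P_\perp\mathcal{F}(\omega,\varphi)=0$, and $\|\psi_{\omega_p}\cos y\|_{L^2}=1$ by \eqref{L2-norm11}, the component of $\mathcal{F}(\omega,\varphi)$ along $\psi_{\omega_p}\cos y$ is the scalar $\beta(\omega,a):=\mathcal{F}_\parallel(\omega,a)$, so
\[
-\partial_x^2\varphi-\partial_y^2\varphi+\omega\varphi=|\varphi|^{p-1}\varphi+\beta(\omega,a)\,\psi_{\omega_p}\cos y,
\]
with $\beta$ bounded uniformly on the relatively compact parameter range. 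Using $\|\eta(\omega,a)\|_{H^2}<r_0$ together with $H^2(\R\times\T)\hookrightarrow L^\infty\cap C^0$ and the decay of $R_{\omega_p}$ and $\psi_{\omega_p}$, I would show $\varphi$ is uniformly bounded and tends to $0$ as $|x|\to\infty$, uniformly in $(\omega,a,y)$. Combined with $\varphi(\omega,0)=R_\omega$ (see \eqref{phi-a=0}) and the $C^1$-bound \eqref{21/10/29/9:59} (so $\|\varphi-R_\omega\|_{L^\infty}\lesssim|a|$), this lets me fix a radius $X_0=X_0(\varepsilon)$ with $|\varphi|^{p-1}\le\varepsilon\sqrt\omega$ for $|x|\ge X_0$, and then pick $a_\varepsilon$ so small that $\varphi\ge\tfrac12\min_{|x|\le X_0}R_\omega>0$ on the slab $|x|\le X_0$; on that bounded slab both required inequalities then hold after enlarging $C_1(\varepsilon)$. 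Note also that $\varphi\in H^2_{\mathrm{sym}}$ is even in $x$, so it suffices to work on $x>X_0$.

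For the \emph{upper} bound, Kato's inequality applied to the equation gives, on $x>X_0$, the distributional inequality $(-\partial_x^2-\partial_y^2+\omega)|\varphi|\le|\varphi|^p+|\beta|\psi_{\omega_p}\le\varepsilon\sqrt\omega\,|\varphi|+C e^{-\frac{p+1}{2}\sqrt{\omega_p}|x|}$, using \eqref{eq1-11}. Comparing $|\varphi|$ with the barrier $A e^{-(\sqrt\omega-\varepsilon)|x|}$, I compute $(-\partial_x^2+\omega-\varepsilon\sqrt\omega)\big[Ae^{-(\sqrt\omega-\varepsilon)x}\big]=\big[2\sqrt\omega\varepsilon-\varepsilon^2-\varepsilon\sqrt\omega\big]Ae^{-(\sqrt\omega-\varepsilon)x}\ge\varepsilon(\sqrt\omega-\varepsilon)Ae^{-(\sqrt\omega-\varepsilon)x}$, which dominates $Ce^{-\frac{p+1}{2}\sqrt{\omega_p}|x|}$ for $A$ large, since the forcing decays strictly faster ($\frac{p+1}{2}>1$). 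Taking $A$ also large enough that the barrier exceeds $|\varphi|$ at $x=X_0$, the maximum principle for $-\partial_x^2-\partial_y^2+(\omega-\varepsilon\sqrt\omega)$ (zeroth-order coefficient $\ge\omega/2>0$) on $\{x>X_0\}\times\T$, together with $|\varphi|\to0$ at infinity, yields $|\varphi|\le Ae^{-(\sqrt\omega-\varepsilon)|x|}$, and the bounded-slab estimate completes the upper bound.

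The \emph{lower} bound is where the real difficulty lies, and I expect it to be the main obstacle: the forcing $\beta\psi_{\omega_p}\cos y$ is not sign-definite (because of the factor $\cos y$), so positivity cannot be read off from a naive maximum principle, and near infinity $R_\omega\sim e^{-\sqrt\omega|x|}$ may be smaller than the correction $\varphi-R_\omega$, so one cannot simply dominate. My plan is to freeze the solution and view the equation as the \emph{linear} problem $(-\partial_x^2-\partial_y^2+V)\varphi=\beta\psi_{\omega_p}\cos y$ with fixed potential $V:=\omega-|\varphi|^{p-1}$, which by the upper bound satisfies $V\ge\omega/2>0$ on $|x|\ge X_0$. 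I would compare $\varphi$ from below with the pure exponential subsolution $\underline v=\kappa e^{-(\sqrt\omega+\varepsilon)|x|}$: on $x>X_0$ one has $(-\partial_x^2-\partial_y^2+V)\underline v=\kappa\big[V-(\sqrt\omega+\varepsilon)^2\big]e^{-(\sqrt\omega+\varepsilon)x}\le-2\kappa\sqrt\omega\varepsilon\,e^{-(\sqrt\omega+\varepsilon)x}$ (using $V\le\omega$). The crucial point is that this negative quantity lies below the forcing: since $\frac{p+1}{2}\sqrt{\omega_p}>\sqrt\omega+\varepsilon$ for $\varepsilon<\varepsilon_1$ and $\omega$ in the small window (shrinking $\varepsilon_1$, and $\delta_0$ if needed), the bound $-Ce^{-\frac{p+1}{2}\sqrt{\omega_p}|x|}\le\beta\psi_{\omega_p}\cos y$ decays strictly faster, so $-2\kappa\sqrt\omega\varepsilon\,e^{-(\sqrt\omega+\varepsilon)x}\le-Ce^{-\frac{p+1}{2}\sqrt{\omega_p}x}$ holds on $x\ge X_0$ once $\kappa\ge \tfrac{C}{2\sqrt\omega\varepsilon}e^{-[\frac{p+1}{2}\sqrt{\omega_p}-\sqrt\omega-\varepsilon]X_0}$. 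Choosing $X_0$ large (permissible, since any larger radius still works in the upper bound) makes this lower constraint on $\kappa$ compatible with the boundary constraint $\kappa e^{-(\sqrt\omega+\varepsilon)X_0}\le\varphi(X_0,\cdot)$, where the right side is controlled from below via \eqref{decay-R}. The comparison principle for $-\partial_x^2-\partial_y^2+V$ on $\{x>X_0\}\times\T$ then gives $\varphi\ge\kappa e^{-(\sqrt\omega+\varepsilon)|x|}$ on $|x|>X_0$; combined with the strictly positive lower bound on the slab $|x|\le X_0$, this yields both positivity and the claimed lower estimate after enlarging $C_1(\varepsilon)$. A final bookkeeping step tracks the dependence of $\varepsilon_1$, $a_\varepsilon$, and $C_1(\varepsilon)$ on $p$ and $\varepsilon$ to match the statement.
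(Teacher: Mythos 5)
Your proposal is correct and follows essentially the same route as the paper: you freeze the potential to work with the operator $-\p_x^2-\p_y^2+\omega-|\varphi|^{p-1}$ (the paper's $\mathbf{L}_{-}(\omega,a)$, satisfying $\mathbf{L}_{-}\varphi=\mathcal{F}_{\parallel}\,\psi_{\omega_p}\cos y$), compare $\varphi$ against the exponential barriers $e^{-(\sqrt{\omega}\mp\varepsilon)|x|}$ outside a slab where the forcing term decays strictly faster (rate $\tfrac{p+1}{2}\sqrt{\omega_p}$), use continuity in $a$ to control $\varphi$ near $R_\omega$ on the slab, and conclude by the maximum principle on the exterior region --- exactly the Berestycki--Nirenberg scheme of the paper's proof. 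The only differences are cosmetic: you invoke Kato's inequality for the upper bound and allow adjustable constants $A,\kappa$ with a large radius $X_0$, where the paper uses unit-coefficient barriers, tunes $\rho(\varepsilon)$ via $C_*e^{-\varepsilon\rho(\varepsilon)}\le\varepsilon^2$, and rules out periodic-direction minima with the Hopf lemma.
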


\begin{proposition} \label{c1-uni-decay}
Assume $p>1$. Let $\varepsilon_{1}>0$ be the constant given in 
 Proposition \ref{lem-up-low-b}.  
 Then, for any $0< \varepsilon < \varepsilon_{1}$, there exist $0< a_{\varepsilon} < a_{0}$, $C_{2}(\varepsilon)>0$ and $C_{3}(\varepsilon)>0$ depending only on $p$ and $\varepsilon$ such that 
 for any $(\omega, a, x, y) \in 
(\omega_{p} - \delta_{0}, \omega_{p} 
+ \delta_{0}) \times 
(-a_{\varepsilon}, a_{\varepsilon})
\times \R \times \T$, 
\begin{align}
\label{c-exp-decay3}
\big|\p_{\omega} \varphi
(\omega, a, x, y) 
\big|
\le C_{2}(\varepsilon) 
e^{-(\sqrt{\omega} - 2\varepsilon)|x|}, 
\\[6pt]
\label{c-exp-decay1}
\big|\p_{a} \varphi(\omega, a, x, y) \big| 
\le 
C_{3}(\varepsilon) e^{-(\sqrt{\omega}-\varepsilon)|x|} 
.
\end{align}
\end{proposition}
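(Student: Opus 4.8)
The plan is to reduce both bounds to one comparison (barrier) argument, paralleling the proof of Proposition~\ref{lem-up-low-b}. The first step is to turn the abstract identities \eqref{21/11/5/10:6} and \eqref{21/11/5/11:11} into genuine elliptic equations. Since $R_{\omega_{p}}$ and $a\psi_{\omega_{p}}\cos y$ do not depend on $\omega$, we have $\partial_{\omega}\varphi=\partial_{\omega}\eta$, while $\partial_{a}\varphi=\psi_{\omega_{p}}\cos y+\partial_{a}\eta$. Writing $\mathbf{T}(\omega,a):=P_{\perp}\partial_{u}\mathcal{F}(\omega,\varphi(\omega,a))|_{X_{2}}$, applying $\mathbf{T}(\omega,a)$ to \eqref{21/11/5/10:6}--\eqref{21/11/5/11:11}, and unfolding the projection \eqref{21/8/5/17:39} (recall $\psi_{\omega_{p}}\cos y$ is a unit vector), I would obtain
\[
\partial_{u}\mathcal{F}(\omega,\varphi)\,\partial_{\omega}\varphi=-\varphi+c_{1}(\omega,a)\,\psi_{\omega_{p}}\cos y,
\qquad
\partial_{u}\mathcal{F}(\omega,\varphi)\,\partial_{a}\varphi=c_{2}(\omega,a)\,\psi_{\omega_{p}}\cos y,
\]
for scalars $c_{1},c_{2}$ (here $\varphi=\varphi(\omega,a)$). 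Since $\varphi>0$ by Proposition~\ref{lem-up-low-b}, one has $\partial_{u}\mathcal{F}(\omega,\varphi)=-\partial_{x}^{2}-\partial_{y}^{2}+\omega-p\varphi^{p-1}$, so these are linear elliptic equations on the cylinder $\R\times\T$ with a nonnegative, continuous potential $V:=p\varphi^{p-1}$ that tends to $0$ as $|x|\to\infty$.

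Next I would control the data uniformly in $(\omega,a)$. Using self-adjointness of $\partial_{u}\mathcal{F}(\omega,\varphi)$ together with the uniform $H^{2}$ bounds on $\partial_{\omega}\eta$ and $\partial_{a}\eta$ in \eqref{21/10/29/9:59}, one checks $|c_{1}|+|c_{2}|\lesssim 1$. Proposition~\ref{lem-up-low-b} and \eqref{eq1-11} then give $|-\varphi+c_{1}\psi_{\omega_{p}}\cos y|\lesssim e^{-(\sqrt{\omega}-\varepsilon)|x|}$ and $|c_{2}\psi_{\omega_{p}}\cos y|\lesssim e^{-\frac{p+1}{2}\sqrt{\omega_{p}}|x|}$, the latter decaying faster than $e^{-\sqrt{\omega}|x|}$ once $\delta_{0}$ is small, since $\frac{p+1}{2}>1$. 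The same proposition shows that for any $\kappa>0$ there is $M$ with $V\le\kappa$ on $\{|x|\ge M\}$.

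The heart of the matter is a barrier estimate. For the first equation I would take $\Phi(x)=A\,e^{-\mu|x|}$ with $\mu=\sqrt{\omega}-2\varepsilon$, and for the second $\mu=\sqrt{\omega}-\varepsilon$, so that in each case the right-hand side decays no slower than $e^{-\mu|x|}$. Away from $x=0$, $(-\partial_{x}^{2}-\partial_{y}^{2}+\omega-V)\Phi=(\omega-\mu^{2}-V)\Phi$, and since $\omega-\mu^{2}>0$ I can fix $M$ so large that $\omega-\mu^{2}-V\ge\tfrac12(\omega-\mu^{2})$ on $\{|x|\ge M\}$; then enlarging $A$ makes $\Phi$ dominate the source there and, on $\{|x|=M\}$, dominate the uniformly bounded functions $|\partial_{\omega}\varphi|$ and $|\partial_{a}\varphi|$. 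Applying the weak maximum principle for $-\Delta+(\omega-V)$ on $\{|x|\ge M\}\times\T$ to $\Phi\pm\partial_{\omega}\varphi$ and to $\Phi\pm\partial_{a}\varphi$ --- legitimate because $\omega-V>0$ there and these functions vanish at infinity (being in $H^{2}$) --- yields the pointwise bounds for $|x|\ge M$, and the uniform $L^{\infty}$ bound on the compact region $\{|x|\le M\}$ closes the estimate with constants $C_{2}(\varepsilon),C_{3}(\varepsilon)$ depending only on $p,\varepsilon$.

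The main obstacle I anticipate is the comparison step on the non-compact cylinder with a potential $V=p\varphi^{p-1}$ that, when $1<p<2$, is only H\"older continuous: one must justify the maximum principle in this low-regularity setting (where a weak, rather than classical, formulation is needed) and track the decay rates exactly. This rate bookkeeping also explains the asymmetry $2\varepsilon$ versus $\varepsilon$: the source of $\partial_{\omega}\varphi$ contains $\varphi$ itself, which Proposition~\ref{lem-up-low-b} controls only at rate $\sqrt{\omega}-\varepsilon$, forcing the extra loss, whereas the source of $\partial_{a}\varphi$ is a multiple of $\psi_{\omega_{p}}\cos y$ and decays strictly faster than $e^{-\sqrt{\omega}|x|}$.
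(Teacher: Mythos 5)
Your proof is correct, and its first half coincides with the paper's: the elliptic equations you derive from \eqref{21/11/5/10:6}--\eqref{21/11/5/11:11} are exactly the paper's identities $\mathbf{L}_{+}(\omega,a)\,\partial_{\omega}\varphi(\omega,a)=-\varphi(\omega,a)+\widehat{E}(\omega,a)$ and $\mathbf{L}_{+}(\omega,a)\,\partial_{a}\varphi(\omega,a)=\widetilde{E}(\omega,a)$, with your $c_{1}\psi_{\omega_{p}}\cos y$ and $c_{2}\psi_{\omega_{p}}\cos y$ equal to $\widehat{E}$ and $\widetilde{E}$, and the source bounds come from the same ingredients (Lemma \ref{main-lem-2}, \eqref{21/10/29/9:59}, \eqref{eq1-11}, Proposition \ref{lem-up-low-b}). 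The genuine difference is the mechanism converting source decay into solution decay. The paper invokes Proposition \ref{thm-exd-1}, which inverts $-\partial_{x}^{2}-\partial_{y}^{2}+\omega$, treats $p\varphi^{p-1}v$ as an additional source, and---because $\varphi^{p-1}$ decays only at the slow rate $(p-1)(\sqrt{\omega}-\varepsilon)$ when $p$ is near $1$---bootstraps the kernel estimate, improving the decay of $v$ in increments until it reaches $\alpha-\varepsilon$. You instead keep the potential on the left-hand side and compare with exponential barriers via the maximum principle on $\{|x|\ge M\}\times\T$, where $\omega-\mu^{2}-p\varphi^{p-1}>0$; this is the Berestycki--Nirenberg scheme that the paper itself uses to prove Proposition \ref{lem-up-low-b}, so it is a natural economy. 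Your route avoids the iteration altogether (the slow decay of the potential is irrelevant once it is not treated as a source), and the regularity issue you flag is harmless: $t\mapsto t^{p-1}$ is H\"older on $[0,\infty)$ for $1<p<2$, and $\varphi$ is locally bounded below by the lower bound in \eqref{c-e-de}, so $p\varphi^{p-1}$ is locally H\"older, interior Schauder estimates make $\partial_{\omega}\varphi$ and $\partial_{a}\varphi$ classical solutions, and the pointwise minimum argument goes through exactly as in the proof of Proposition \ref{lem-up-low-b}. What the paper's formulation buys in return is reusability: Proposition \ref{thm-exd-1} applies to any bounded solution of $\mathbf{L}_{+}(\omega,a)v=G$ and is invoked again for the second derivatives in Lemma \ref{9/2-13:45}, whereas your comparison argument would have to be rerun there (it does adapt, since those equations have the same structure). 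Your bookkeeping of the $2\varepsilon$ versus $\varepsilon$ loss agrees with the paper's, and in both arguments the constants depend only on $p$ and $\varepsilon$.
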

We use these propositions to 
show the differentiability 
of $\varphi(\omega, a)$ with respect to 
$\omega$ and $a$ (see Section 
\ref{sec-regularlity} below 
in details). 

We will introduce symbols used in the rest of this paper: 
\begin{notation}\label{21/7/29/13:59}
\begin{enumerate}
Let $p>1$ and $(\omega,a)\in (\omega_{p}-\delta_{0}, \omega_{p}+\delta_{0}) \times (-a_{0},a_{0})$.
\item 
Define the operators $\mathbf{L}_{-}(\omega, a)$ and $\mathbf{L}_{+}(\omega, a)$  on $L_{\rm{real}}^{2}(\R \times \T)$ with the domain $H^{2}(\R \times \T)$ as \begin{align}
\label{21/7/29/15:55}
\mathbf{L}_{-}(\omega, a) 
&:= 
- \p_{x}^{2} 
- \p_{y}^{2} + \omega 
- \varphi(\omega, a)^{p-1},
\\[6pt]
\label{def-L}
\mathbf{L}_{+}(\omega, a) 
&:= 
\partial_{u} \mathcal{F}(\omega,\varphi(\omega,a))
=
- \partial_{x}^{2} 
- \partial_{y}^{2} + \omega 
- p  \varphi(\omega, a)^{p-1}
. 
\end{align} 
Note that both $\mathbf{L}_{-}(\omega, a)$ and $\mathbf{L}_{+}(\omega, a)$ are self-adjoint operators on $L^{2}(\R\times \T)$ with domain $H^{2}(\R\times \T)$. Observe from \eqref{phi-a=0} that 
\begin{equation}\label{21/8/1/15:10}
\mathbf{L}_{+}(\omega, 0)
=
\partial_{u}\mathcal{F}(\omega, R_{\omega}) .
\end{equation}
Furthermore, by \eqref{21/8/1/15:10}, \eqref{phi-a=0} and \eqref{21/11/7/15:51}, we see that 
\begin{equation}\label{21/11/7/15:47}
\mathbf{L}_{+}(\omega, 0) \p_{\omega}\varphi(\omega,0)
=
\partial_{u}\mathcal{F}(\omega, R_{\omega}) \p_{\omega}R_{\omega} 
=
-R_{\omega}
.
\end{equation}

\item Define 
\begin{equation}\label{21/10/24/17:47}
E(\omega,a)
:= 
\mathcal{F}_{\parallel}(\omega,a) \psi_{\omega_{p}} \cos y
. 
\end{equation} 
Lemma \ref{main-lem-2} together with \eqref{eq2-10} shows  that 
\begin{equation}\label{21/7/29/16:17}
\mathbf{L}_{-}(\omega, a) \varphi(\omega, a) 
=
\mathcal{F}_{\perp}(\omega,a, \eta(\omega,a)) 
+
\mathcal{F}_{\parallel}(\omega,a) \psi_{\omega_{p}} \cos y
=
E(\omega,a)
. 
\end{equation} 
\end{enumerate}
\end{notation}

\subsection{Proof of Proposition \ref{lem-up-low-b}}
Following the argument of 
Berestycki and Nirenberg~\cite{BN}, 
we shall prove Proposition \ref{lem-up-low-b}. 
The maximal principle plays an 
important role in the proof. 
We first show 
a uniform decay of $\varphi(\omega, a)$. 
\begin{lemma} \label{uni-decay}
Assume $p > 1$. Then, for any $\omega \in 
(\omega_{p} - \delta_{0}, \omega_{p} + \delta_{0})$,  
the following holds: 
\begin{equation} \label{aim}
\lim_{|x| \to \infty} 
\sup_{a \in [-\frac{a_{0}}{2}, 
\frac{a_{0}}{2}]}\; 
\sup_{y \in \T} 
\varphi(\omega, a, x, y) = 0. 
\end{equation}
\end{lemma}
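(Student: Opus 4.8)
The plan is to exploit the explicit decomposition $\varphi(\omega,a)=R_{\omega_{p}}+a\,\psi_{\omega_{p}}\cos y+\eta(\omega,a)$ and to handle the three summands separately. The first two are harmless: by \eqref{decay-R} and \eqref{eq1-11}, both $R_{\omega_{p}}(x)$ and $\psi_{\omega_{p}}(x)$ decay like $e^{-c|x|}$ and depend only on $x$, so that $\sup_{a\in[-a_{0}/2,a_{0}/2]}\sup_{y\in\T}\big(R_{\omega_{p}}(x)+|a|\,|\psi_{\omega_{p}}(x)|\big)\to 0$ as $|x|\to\infty$. Hence the whole statement reduces to establishing the uniform decay $\lim_{|x|\to\infty}\sup_{a\in[-a_{0}/2,a_{0}/2]}\sup_{y\in\T}|\eta(\omega,a,x,y)|=0$; note this in fact proves $\sup_{a,y}|\varphi|\to 0$, which is stronger than \eqref{aim}.

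First I would convert pointwise control of $\eta$ into a tail estimate. Since $\R\times\T$ is two dimensional, the embedding $H^{2}\hookrightarrow L^{\infty}$ holds, and applying it on the unit cylinders $Q_{x_{0}}:=(x_{0}-1,x_{0}+1)\times\T$, which are all translates of a fixed domain, yields a constant $C$ independent of $x_{0}$ with $\|u\|_{L^{\infty}(Q_{x_{0}})}\le C\|u\|_{H^{2}(Q_{x_{0}})}$ for every $u\in H^{2}(\R\times\T)$. Consequently, for $X\ge 1$ one gets $\sup_{|x|\ge X,\,y}|u(x,y)|\le C\,\|u\|_{H^{2}(\{|x|\ge X-1\}\times\T)}$. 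Thus it suffices to show that the tail norm $\|\eta(\omega,a)\|_{H^{2}(\{|x|\ge X-1\}\times\T)}$ tends to $0$ as $X\to\infty$, uniformly in $a\in[-a_{0}/2,a_{0}/2]$.

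The uniform tail smallness is where the only genuine difficulty lies, and I would resolve it by compactness. By Lemma \ref{main-lem-2} the map $a\mapsto\eta(\omega,a)$ is $C^{1}$, hence continuous, from the compact interval $[-a_{0}/2,a_{0}/2]$ into $H^{2}(\R\times\T)$, so its image $K:=\{\eta(\omega,a):a\in[-a_{0}/2,a_{0}/2]\}$ is compact in $H^{2}(\R\times\T)$. For a single fixed function the tail norm over $\{|x|\ge X-1\}\times\T$ vanishes as $X\to\infty$, being the tail of a convergent integral. Given $\eps>0$, I would cover $K$ by finitely many $H^{2}$-balls of radius $\eps/2$ centred at $\eta(\omega,a_{1}),\dots,\eta(\omega,a_{k})$, choose $X$ so large that $\|\eta(\omega,a_{j})\|_{H^{2}(\{|x|\ge X-1\}\times\T)}<\eps/2$ for each $j$, and conclude by the triangle inequality that $\|\eta(\omega,a)\|_{H^{2}(\{|x|\ge X-1\}\times\T)}<\eps$ for every $a$. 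Combining this with the localized embedding of the previous paragraph and the exponential decay of the explicit terms yields \eqref{aim}.

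The main obstacle is exactly the uniformity in the parameter $a$: the bare $H^{2}$ bound \eqref{21/10/29/9:59} guarantees decay of each individual $\eta(\omega,a)$ but not a rate uniform over $a$. It is the compactness of the curve $a\mapsto\eta(\omega,a)$ in $H^{2}$ that upgrades the pointwise statement to the uniform one, and this is the only point in the argument where more than the crude norm bound is used; the elliptic equation \eqref{21/7/29/16:17} and the maximum principle are not needed here, being reserved for the sharper bounds of Proposition \ref{lem-up-low-b}.
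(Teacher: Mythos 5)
Your proof is correct, and it reaches the conclusion by a route that differs in its mechanics from the paper's, even though both ultimately rest on compactness in the parameter $a$. The paper argues by contradiction: it extracts a sequence $(a_{n}, x_{n}, y_{n})$ with $|\varphi|$ bounded below, uses sequential compactness of $[-\tfrac{a_{0}}{2},\tfrac{a_{0}}{2}]$ to pass to a limit $a_{\infty}$, controls $\varphi(\omega,a_{n})-\varphi(\omega,a_{\infty})$ via the global embedding $H^{2}(\R\times\T)\hookrightarrow L^{\infty}$ together with the mean-value theorem and the uniform derivative bound \eqref{21/10/29/9:59}, and then contradicts the decay of the single limiting function $\varphi(\omega,a_{\infty})\in H^{2}$. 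You instead argue directly: compactness of the image $K=\{\eta(\omega,a)\}$ in $H^{2}$, a finite $\varepsilon/2$-cover, tail $H^{2}$-norms of the finitely many centers, and a translation-invariant localized embedding on unit cylinders to convert tail norms into pointwise bounds. Your version buys two small things: it needs only the continuity of $a\mapsto\eta(\omega,a)$ in $H^{2}$ (Lipschitz continuity via \eqref{21/10/29/9:59} is never invoked), and it yields a quantitative uniform tail estimate $\sup_{|x|\ge X}\sup_{a,y}|\varphi|\lesssim \sup_{a}\|\eta(\omega,a)\|_{H^{2}(\{|x|\ge X-1\}\times\T)}+e^{-cX}$ rather than a bare limit statement; the explicit decomposition into $R_{\omega_{p}}+a\psi_{\omega_{p}}\cos y+\eta$ is harmless but not essential, since the same covering argument applies to $a\mapsto\varphi(\omega,a)$ itself. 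The paper's contradiction argument, on the other hand, is shorter to write down and avoids introducing restricted $H^{2}$-norms and the uniformity of the local embedding constant. Both are elementary and correct; neither uses the elliptic equation or the maximum principle, which, as you note, enter only in Proposition \ref{lem-up-low-b}.
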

\begin{proof}
If the claim were false, then there existed $\omega_{*} \in (\omega_{p} - \delta_{0}, \omega_{p} + \delta_{0})$,  $C_{*}>0$ and a sequence $\{x_{n}\}$ with $\lim_{n \to \infty}|x_{n}| = \infty$ such that 
\[
\sup_{a \in [-\frac{a_{0}}{2}, 
\frac{a_{0}}{2}]}
\sup_{y \in \T}
|\varphi(\omega_{*}, a, x_{n}, y)| 
\ge C_{*}
\qquad \mbox{for all $n \in 
\mathbb{N}$}. 
\]
Furthermore, for each $n \in \mathbb{N}$, 
there exist $a_{n} \in 
[-\frac{a_{0}}{2}, 
\frac{a_{0}}{2}]$ 
and $y_{n} \in \T$
such that 
\begin{equation} \label{eq-1}
|\varphi(\omega_{*}, a_{n}, 
x_{n}, y_{n})| \geq \frac{C_{*}}{2}. 
\end{equation}
Since $[- \frac{a_{0}}{2}, \frac{a_{0}}{2}]$ is compact,  
we may assume that there exists 
$a_{\infty} \in [- 
\frac{a_{0}}{2}, 
\frac{a_{0}}{2}]$ such that 
$\lim_{n \to \infty}a_{n} = a_{\infty}$. 
Note that  $\partial_{a}\varphi(\omega_{*},a)$ is continuous with respect to $a$ in $H^{2}(\R \times \T)$. 
 Then, by the Sobolev inequality and 
the mean-value theorem, 
we see that  
\begin{equation}\label{21/7/28/11:54}
\begin{split} 
&
|\varphi(\omega_{*}, a_{n}, x_{n}, y_{n}) - 
\varphi(\omega_{*}, a_{\infty}, x_{n}, y_{n})| 
\lesssim 
\|\varphi(\omega_{*}, a_{n}) 
- \varphi(\omega_{*}, a_{\infty})
\|_{H^{2}(\R \times \T)} \\[6pt]
& 
\lesssim |a_{n} - a_{\infty}| 
\sup_{0\le \theta \le 1}
\big\|
\partial_{a} \varphi(\omega_{*}, a_{\infty}+\theta (a_{n}- a_{\infty}) )
\big\|_{H^{2}(\R \times \T)}
\lesssim |a_{n} - a_{\infty}|
,
\end{split}
\end{equation}
where the implicit constant is independent of $n$. 
We find from \eqref{21/7/28/11:54} that there exists a number $n_{*}\ge 1$ such that 
\begin{equation}\label{21/7/29/12:1}
|\varphi(\omega_{*}, a_{n}, x_{n}, y_{n}) - 
\varphi(\omega_{*}, a_{\infty}, x_{n}, y_{n})|
\leq \frac{C_{*}}{4} \qquad 
\mbox{for all $n \geq n_{*}$}. 
\end{equation}
Furthermore, it follows from \eqref{eq-1} and \eqref{21/7/29/12:1}
that 
\[
|\varphi(\omega_{*}, a_{\infty}, 
x_{n}, y_{n})| 
\geq \frac{C_{*}}{8} 
\qquad \mbox{for all $n \geq n_{*}$}, 
\]
which is absurd because 
$\varphi(\omega_{*}, a_{\infty}) \in H^{2}
(\R \times \T)$ implies that 
\[
\lim_{|x| \to \infty} \sup_{y \in \T} 
\varphi(\omega_{*}, a_{\infty}, x, y) = 0.
\] 
Thus, the claim of the lemma must be true. 
\end{proof}

Now, we give a proof of Proposition \ref{lem-up-low-b}: 
\begin{proof}[Proof of Proposition \ref{lem-up-low-b}]
Let $\omega \in (\omega_{p} - \delta_{0}, \omega_{p} + \delta_{0})$ and  
 $\varepsilon >0$ be a small constant to be specified later (see \eqref{21/10/26/11:30}), and define  
\begin{equation*} \label{eq-We}
W_{1}(x ,y)=W_{1}(x) := e^{-(\sqrt{\omega}-\varepsilon)|x|}, 
\qquad 
W_{2}(x, y)=W_{2}(x) := e^{-(\sqrt{\omega}+\varepsilon)|x|}.
\end{equation*}
Furthermore, let $a \in [-\frac{a_{0}}{2}, \frac{a_{0}}{2}]$ be a constant to be specified later. 
 It is easy to verify that
\begin{align}
\label{21/10/24/17:51}
& 
\mathbf{L}_{-}(\omega, a) W_{1} = \left( 2 \sqrt{\omega} 
\varepsilon - \varepsilon^2 
- |\varphi(\omega, a)|^{p-1} \right) W_{1}, 
\\[6pt]
\label{21/10/24/17:52}
& 
\mathbf{L}_{-}(\omega, a) W_{2} = \left(- 2 \sqrt{\omega} 
\varepsilon - \varepsilon^2 
- |\varphi(\omega, a)|^{p-1} \right) W_{2}.
\end{align}
Furthermore, we define 
\begin{equation*}
v_{1, a} := W_{1} - \varphi(\omega, a), 
\qquad  
v_{2, a} := \varphi(\omega, a)- W_{2}.
\end{equation*}
Then, by \eqref{21/10/24/17:51}, \eqref{21/10/24/17:52} and \eqref{21/7/29/16:17},  we see that 
\begin{align} 
\label{eq-L1} 
& 
\mathbf{L}_{-}(\omega, a) v_{1, a} = \left(2 \sqrt{\omega} \varepsilon 
- \varepsilon^2 
- |\varphi(\omega, a)|^{p-1}\right) W_{1} -E(\omega,a), 
\\[6pt] 
& 
\label{eq-L2}
\mathbf{L}_{-}(\omega, a) v_{2, a} = 
\left(2 \sqrt{\omega} \varepsilon + \varepsilon^2 
+ |\varphi(\omega, a)|^{p-1}\right) W_{2} +E(\omega,a)
.
\end{align}
Define 
\[
B_{r_{0}} 
:= \{ 
u \in X_{2}
\colon \|u\|_{H^{2}(\R \times \T)} < r_{0}
\}. 
\]
Then, by Lemma \ref{main-lem-2}, 
 there exists a constants $C_{*}>0$ depending only on $p$ 
 such that for any $x\in\R$, 
\begin{equation}\label{21/10/26/11:5}
\begin{split}
\sup_{y \in \T}|E(\omega,a, x, y)|
& 
\leq 
\|\mathcal{F}(\omega, 
\varphi(\omega, a))\|_{L^{2}(\R \times \T)}
\|\psi_{\omega_{p}} \cos y\|
_{L^{2}(\R \times \T)} 
|\psi_{\omega_{p}}(x)| 
\\ 
& 
\leq C_{*} e^{-\frac{p+1}{2} \sqrt{\omega}|x|}. 
\end{split}
\end{equation}

Now, we impose the smallness condition on $\varepsilon$ by 
\begin{equation}\label{21/10/26/11:30}
0<\varepsilon < \frac{\min\{p-1,~1\}}{4} \sqrt{\omega_{p}-\delta_{0}}.
\end{equation} 
Let $\rho (\varepsilon) > 0$ be a number such that 
\begin{equation}\label{21/10/27/10:14}
C_{*} e^{-\varepsilon \rho (\varepsilon)} \le \varepsilon^{2}.
\end{equation}  
 Then, by \eqref{21/10/26/11:5}, we see that 
 if $|x|\geq \rho (\varepsilon)$, then 
\begin{equation}\label{21/10/27/9:4}
\sup_{y \in \T}
|E(\omega,a,x, y)| 
\le
C_{*} e^{- 
\frac{p-1}{4}\sqrt{\omega_{p}-\delta_{0}} 
|x|
} W_{2} 
\leq C_{*} e^{- \varepsilon \rho (\varepsilon)}
W_{2}
\leq \varepsilon^{2} W_{2} 
\le
\varepsilon^{2} W_{1}
.
\end{equation} 
Furthermore, by Lemma \ref{uni-decay}, we may assume that  
 $\rho (\varepsilon) > 0$ is so large that 
\[
\sup_{a\in [-\frac{a_{0}}{2}, \frac{a_{0}}{2}]}
\sup_{y \in \T}
\big| \varphi(\omega, a, x, y) \big|^{p-1} 
\leq \varepsilon^2 
\qquad 
\mbox{for all $|x|\geq \rho (\varepsilon)$.}
\]  
Then, it follows from 
\eqref{eq-L1}, \eqref{eq-L2}, \eqref{21/10/27/9:4} and \eqref{21/10/26/11:30} 
 that   
\begin{equation}\label{positiv-Lv-1}
\mathbf{L}_{-}(\omega, a)v_{1, a} \ge  0, 
\qquad 
\mathbf{L}_{-}(\omega, a)v_{2, a} \ge 0 
\qquad 
\mbox{for all $|x| \ge \rho (\varepsilon)$ and $y \in \T$}
. 
\end{equation}
On the other hand, 
Since $\lim_{a \to 0} 
\|\varphi(\omega, a) - \varphi(\omega, 0)\|
_{L^{\infty}(\R \times \T)} = 0$ and $\varphi(\omega, 0) = 
R_{\omega}\ge \omega^{\frac{1}{p-1}}e^{-\sqrt{\omega_{p}+\delta_{0}}|x|}$ (see \eqref{phi-a=0} and \eqref{decay-R}),  
 we can take $0< a_{\varepsilon} \le \frac{a_{0}}{2}$ depending only on $p$ and $\varepsilon$ such that if $a \in (-a_{\varepsilon}, a_{\varepsilon})$, then 
\begin{equation}\label{21/10/27/10:6}
\frac{1}{2} R_\omega(x) \leq 
\varphi(\omega, a, x, y) \leq 2 
R_\omega(x) 
\qquad 
\mbox{for all $|x|\le 2\rho(\varepsilon)$ and $y\in \T$}.
\end{equation}
Furthermore, by \eqref{21/10/27/10:14}, \eqref{21/10/27/10:6} and \eqref{decay-R}, we see that 
for any $y\in \T$, 
\begin{equation}\label{21/10/24/10:17}
\begin{split} 
v_{1, a}(\rho (\varepsilon), y)
&\ge 
\varepsilon^{-2} C_{*}
e^{-\sqrt{\omega} \rho (\varepsilon)} 
-2\{ 2(p+1)\sqrt{\omega_{p}+\delta_{0}} \}^{\frac{1}{p-1}} 
e^{-\sqrt{\omega}\rho (\varepsilon)} 
.
\end{split}
\end{equation}
Similarly, we see that for any $y\in \T$,
\begin{equation}\label{21/10/27/10:18}
v_{2, a}(\rho (\varepsilon),y) 
\ge 
\frac{1}{2} (\omega_{p}-\delta_{0})^{\frac{1}{p-1}}e^{-\sqrt{\omega}\rho(\varepsilon)}
- 
C_{*}^{-1}\varepsilon^{2}
e^{-\sqrt{\omega} \rho (\varepsilon)}
.
\end{equation}
Thus, we find from \eqref{21/10/24/10:17} and \eqref{21/10/27/10:18} that 
 if $\varepsilon$ is sufficiently small dependently only on $p$, then 
\begin{equation}
\label{positiv-v-1}
v_{1, a} (\rho (\varepsilon), y) \geq 0, 
\qquad 
v_{2, a} (\rho (\varepsilon), y) \geq 0 
\qquad 
\mbox{for all $y\in \T$}
.
\end{equation}

Define  
\[
S_\varepsilon := 
\{ (x, y) \in \R \times \T \colon
|x| > \rho (\varepsilon) \}.
\]
We claim that
\begin{equation} \label{posi-max}
v_{1, a}, v_{2, a} \geq 0 
\qquad \mbox{for all $(x, y) 
\in S_\varepsilon$}.
\end{equation}
Note that \eqref{posi-max} together with \eqref{21/10/27/10:6} proves Proposition \ref{lem-up-low-b}. 
 We prove \eqref{posi-max} by contradiction. Suppose that 
\eqref{posi-max} fails. 
Then, we see that there exists 
$(x_{\min}, y_{\min}) \in \R \times [-\pi, \pi]$ 
such that 
$v_{1, a}(x_{\min}, y_{\min}) = 
\min_{(x, y) \in S_{\varepsilon}} 
v_{1, \alpha}(x, y) < 0$. 
By 
\eqref{positiv-Lv-1}, \eqref{positiv-v-1}
and  
$\omega -|\varphi(\omega, a)|^{p-1} \ge 0 $ on $S_{\varepsilon}$, 
we can apply the maximum principle 
and find that $y_{\min} \notin (-\pi, \pi)$, 
that is, $y_{\min} = \pi$ or $-\pi$. 
However, by the Hopf lemma 
and $\partial_y v_{1, a}(\pi)
= \partial_y v_{1, a}(-\pi)=0$, 
we can also find that 
$y_{\min} \neq \pm \pi$, which is 
absurd. 
Therefore, 
$v_{1, a} \geq 0$ 
for all $(x, y) \in S_\varepsilon$. 
Similarly, we can prove that  
 $v_{2, a} \geq 0$ on $S_\varepsilon$.  
Thus, we have completed the proof.  
\end{proof}
\subsection{Exponential decay of 
the derivatives of 
$\varphi(\omega, a)$}
In this subsection, we give a proof of Proposition \ref{c1-uni-decay}. 
To this end, for $(\omega,a)\in (\omega_{p}-\delta_{0}, \omega_{p}+\delta_{0}) \times (-a_{0},a_{0})$ and a given function $G$ on $\mathbb{R}\times \mathbb{T}$, we consider the following equation:
\begin{equation} \label{lsp-g}
\mathbf{L}_{+}(\omega, a) v 
= G
\qquad \mbox{in $\R \times \T$}
.
\end{equation}
By a standard Fourier analysis (see also Theorem 6.23 of \cite{Lieb-Loss}), we may write \eqref{lsp-g} as 
\begin{equation}\label{21/11/18/17:25}
v(x,y)
=C  
\int_{\mathbb{R}_{z}} 
\sum_{m\in \mathbb{Z}}
e^{imy}
\int_{\mathbb{R}}
\frac{e^{i\xi (x-z)}}{\xi^2 
+ m^{2} + \omega}
\int_{\T_{w}} e^{-im w}
\big\{
p|\varphi(\omega, a,z,w)|^{p-1}v(z,w) 
+ G(z,w)
\big\}dzd\xi dw
,
\end{equation}
where $C$ is some constant.
A key in proving 
Proposition \ref{c1-uni-decay} is the following: 
\begin{proposition} \label{thm-exd-1}
Assume $p>1$. Let $\varepsilon_{1}>0$ be the constant given in Proposition \ref{lem-up-low-b}, $0<\varepsilon< \varepsilon_{1}$, and let $a_{\varepsilon}$ denote the same constant as in Proposition \ref{lem-up-low-b}. Furthermore, let 
 $(\omega,a)\in (\omega_{p}-\delta_{0}, \omega_{p}+\delta_{0}) \times (-a_{\varepsilon},a_{\varepsilon})$, $v$ be a solution to \eqref{lsp-g}, $A>0$, $B>0$ and $0< \alpha <\sqrt{\omega}$.
 Assume that the function $G$ on the right-hand side of \eqref{lsp-g} obeys 
that 
\begin{equation} \label{decay-g}
|G(x, y)| \le A e^{- \alpha |x|}
\quad 
\mbox{for all $(x,y)\in \mathbb{R}\times \mathbb{T}$}
.
\end{equation} 
Furthermore, assume that 
\begin{equation}\label{21/10/30/15:33}
\|v \|_{L^{\infty}(\R\times \T)} \le B
.
\end{equation}
 Then, the following holds:  
\begin{equation}\label{decay-dphi}
|v(x, y)| \leq C_{\varepsilon} e^{- (\alpha - \varepsilon)|x|} 
\qquad \mbox{for all $(x, y) 
\in \R \times \T$},
\end{equation}
where the implicit constant depends only on $p$, $A$, $B$, $\alpha$ and $\varepsilon$. 
\end{proposition}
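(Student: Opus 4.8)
The plan is to exploit the integral representation \eqref{21/11/18/17:25} and run a bootstrap on the exponential decay rate. Writing the equation \eqref{lsp-g} as $(-\p_{x}^{2}-\p_{y}^{2}+\omega)v = g$ with $g := p|\varphi(\omega,a)|^{p-1}v + G$, the representation \eqref{21/11/18/17:25} says precisely that $v = \boG * g$, where $\boG$ is the Green kernel of $-\p_{x}^{2}-\p_{y}^{2}+\omega$ on $\R\times\T$. Carrying out the $\xi$-integral in \eqref{21/11/18/17:25} by residues, using $\int_{\R} (\xi^{2}+\kappa_{m}^{2})^{-1} e^{i\xi X}\,d\xi = \pi\kappa_{m}^{-1}e^{-\kappa_{m}|X|}$ with $\kappa_{m}:=\sqrt{m^{2}+\omega}$, I obtain
\[
\boG(X,Y) = c\sum_{m\in\Z} e^{imY}\,\frac{e^{-\kappa_{m}|X|}}{\kappa_{m}}
\]
for a constant $c>0$.

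Next I would record the single kernel estimate that drives everything. Since $\omega>0$, the resolvent $(-\p_{x}^{2}-\p_{y}^{2}+\omega)^{-1}$ is positivity preserving, so $\boG\ge 0$; moreover integrating out the torus variable isolates the $m=0$ mode, giving $\int_{\T}\boG(X,Y)\,dY = c'\,e^{-\sqrt{\omega}|X|}$. Combining $v=\boG*g$, $\boG\ge0$ and this identity, and writing $M(z):=\sup_{w\in\T}|g(z,w)|$, I get the $y$-uniform bound
\[
|v(x,y)| \le \int_{\R}\!\int_{\T}\boG(x-z,y-w)\,|g(z,w)|\,dw\,dz \le c'\int_{\R} e^{-\sqrt{\omega}|x-z|}\, M(z)\,dz,
\]
which reduces the whole problem to a single one-dimensional exponential convolution.

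Then comes the bootstrap. By Proposition \ref{lem-up-low-b}, $|\varphi(\omega,a)|^{p-1}\le C e^{-(p-1)(\sqrt{\omega}-\varepsilon)|x|}$, uniformly in $(\omega,a)$; set $\theta := (p-1)(\sqrt{\omega}-\varepsilon)>0$. I claim that $|v(x,y)|\le C_{k}e^{-\gamma_{k}|x|}$ with $\gamma_{k}:=\min\{\alpha,\,k\theta\}$, proved by induction on $k$. The base case $k=0$ is the a priori bound \eqref{21/10/30/15:33}. For the inductive step, the hypothesis together with \eqref{decay-g} gives $M(z)\le A e^{-\alpha|z|}+pC\,C_{k}e^{-(\theta+\gamma_{k})|z|}\lesssim e^{-\gamma_{k+1}|z|}$; feeding this into the convolution estimate above and using that the slower exponential dominates whenever the rate stays strictly below $\sqrt{\omega}$ (which holds since $\gamma_{k+1}\le\alpha<\sqrt{\omega}$) yields $|v|\lesssim e^{-\gamma_{k+1}|x|}$. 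Since each step raises the rate by at least $\theta>0$ until it is capped at $\alpha$, after $k\ge\alpha/\theta$ steps we reach $\gamma_{k}=\alpha$, hence $|v(x,y)|\le C e^{-\alpha|x|}\le C e^{-(\alpha-\varepsilon)|x|}$, which is \eqref{decay-dphi}.

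The main obstacle is precisely the potential term $p|\varphi|^{p-1}v$ inside $g$: it is not an a priori source of rate $\alpha$, and for $p$ close to $1$ it decays only like $e^{-(p-1)\sqrt{\omega}|x|}$, a rate that degenerates as $p\downarrow1$, so no single comparison step can deliver the full rate. The substantive point is that the iteration terminates after finitely many steps because $p>1$ forces the per-step gain $\theta$ to be strictly positive; one must also keep this gain and all constants uniform in $(\omega,a)$ (guaranteed by the uniform constants of Proposition \ref{lem-up-low-b}) and verify that the convolution stays sub-threshold, which is ensured by capping the rate at $\alpha<\sqrt{\omega}$.
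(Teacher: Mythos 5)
Your proposal is correct, and its skeleton --- the integral representation \eqref{21/11/18/17:25} followed by a finite bootstrap that raises the decay rate of $v$ by $\theta=(p-1)(\sqrt{\omega}-\varepsilon)>0$ per step until it saturates --- is exactly the paper's strategy; the genuine difference lies in the kernel estimate that powers each step. The paper keeps the full Fourier sum in $m$ and bounds it mode by mode, using the one-dimensional convolution bound $\int_{\R}e^{-\nu_{m}|x-z|}e^{-\beta|z|}\,dz\lesssim e^{-\beta|x|}/(\nu_{m}-\beta)$ for each $m$ and then summing $\sum_{m\neq0}m^{-2}$ (see \eqref{eq-expo-0} and \eqref{21/10/30/16:42}), whereas you invoke positivity of the resolvent $(-\p_{x}^{2}-\p_{y}^{2}+\omega)^{-1}$ and integrate the Green kernel over the torus variable, which collapses everything onto the $m=0$ mode and reduces each iteration to a single convolution against $e^{-\sqrt{\omega}|X|}$. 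Your route buys simplicity (no $m$-sum at all) and, because your rate is capped at $\alpha$ rather than at $\min\{1,k(p-1)\}(\sqrt{\omega}-\varepsilon)$ as in \eqref{21/10/30/17:51}, it yields the slightly stronger conclusion $|v|\lesssim e^{-\alpha|x|}$, of which \eqref{decay-dphi} is a weakening; the price is the appeal to positivity preservation of the resolvent on $\R\times\T$, which is true and standard (e.g.\ via $(-\Delta+\omega)^{-1}=\int_{0}^{\infty}e^{-\omega t}e^{t\Delta}\,dt$ and positivity of the heat kernel, or by the maximum principle already used in the proof of Proposition \ref{lem-up-low-b}) but should be stated with a one-line justification, since it is the only ingredient not already in the paper's toolkit. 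One shared caveat: both your convolution constant $(\sqrt{\omega}-\alpha)^{-1}$ and the paper's constant in \eqref{21/10/30/16:42} degenerate as $\alpha\uparrow\sqrt{\omega}$, so to get a constant depending only on $p,A,B,\alpha,\varepsilon$ as claimed one should, when $\sqrt{\omega}-\alpha$ is small, first replace $\alpha$ by $\alpha-\varepsilon/2$; this is exactly what the $\varepsilon$-loss in \eqref{decay-dphi} is there to absorb.
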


\begin{proof}[Proof of Proposition \ref{thm-exd-1}] 
By \eqref{21/11/18/17:25}, we see that 
\begin{equation}\label{21/10/28/16:36}
\begin{split}
&|v(x,y)| 
\\[6pt]
&\sim 
\Big|
\int_{\mathbb{R}_{z}} 
\sum_{m\in \mathbb{Z}}
e^{imy}
\frac{e^{-\sqrt{m^{2}+\omega}\,|x-z|}}{
\sqrt{m^{2} + \omega}}
\int_{\T_{w}} e^{-im w}
\big\{
p|\varphi(\omega, a,z,w)|^{p-1}v(z,w) 
+ G(z,w)
\big\}
\Big| 
. 
\end{split} 
\end{equation} 
Put  
\begin{equation}\label{21/10/28/16:37}
\nu_{m} := \sqrt{m^{2}+\omega}.
\end{equation}
Then, by \eqref{21/10/28/16:36}, \eqref{lem-up-low-b} and the assumptions \eqref{decay-g} and \eqref{21/10/30/15:33}, we see that 
\begin{equation}\label{eq-expo-0} 
\begin{split}
|v(x, y) |
\lesssim
B \int_{\mathbb{R}}
\sum_{m \in \mathbb{Z}} 
\frac{e^{- \nu_{m}|x - z|} }{\nu_{m}} 
e^{- (p-1)(\sqrt{\omega} - 
\varepsilon)|z|}
\, dz
+ 
A \int_{\mathbb{R}} 
\sum_{m \in \mathbb{Z}} 
\frac{e^{- \nu_{m}|x - z|} }{\nu_{m}}  
e^{- \alpha |z|} \, dz 
,
\end{split} 
\end{equation}
where the implicit constant depends only on $p$.

Consider the second term on the right-hand side of \eqref{eq-expo-0}. 
 Let $x\in \mathbb{R}$. 
 Then, for any $m\in \mathbb{Z}\setminus \{0\}$, a direct computation together with $0<\alpha<\sqrt{\omega} \le \nu_{m}$ and $\omega<\omega_{p}$ shows that 
\begin{equation}\label{21/10/30/11:37}
\int_{\R} 
e^{- \nu_{m}|x - z|} 
e^{- \alpha |z|} dz 
\le
\int_{\R} 
e^{- (\nu_{m}-\alpha) |x - z|}
e^{- \alpha (|x - z|+ |x|)} dz  
\le  
\frac{2e^{-\alpha|x|}}{\nu_{m}-\alpha} 
\lesssim 
\frac{e^{-\alpha|x|}}{m}
.
\end{equation}
where the implicit constant depends only on $p$. 
 Thus, by \eqref{21/10/30/11:37} and $\omega >\omega_{p}-\delta_{0}$, 
 we see that 
\begin{equation}\label{21/10/30/16:42} 
\int_{\R} 
\sum_{m \in \mathbb{Z}} 
\frac{e^{- \nu_{m}|x - z|} }{\nu_{m}} 
e^{- \alpha |z|} dz  
\lesssim  
\sum_{m \in \mathbb{Z} \setminus \{0\}} 
\frac{1}{m^{2}} e^{- \alpha|x|} 
+ 
\frac{1}{\sqrt{\omega}(\sqrt{\omega}-\alpha)} 
e^{- \alpha |x|}  
\lesssim 
e^{- \alpha  |x|}
,
\end{equation}
where the implicit constants depend only on $p$ and $\alpha$.

Consider the first term on the right-hand side of \eqref{eq-expo-0}. 
 Then, a computation similar to \eqref{21/10/30/11:37}  together with $\omega\ge \omega_{p}-\delta_{0}$ shows that 
\begin{equation}\label{21/10/30/17:10}
\begin{split}
&
\int_{\mathbb{R}}
\sum_{m \in \mathbb{Z}} 
\frac{e^{- \nu_{m}|x - z|} }{\nu_{m}} 
e^{- (p-1)(\sqrt{\omega} - 
\varepsilon)|z|}
dz
\\[6pt] 
& \lesssim 
\sum_{m \in \mathbb{Z}\setminus \{0\}} 
\frac{1}{m^{2}} 
e^{-\min\{1,~(p-1)\} (\sqrt{\omega} -\varepsilon) |x|} 
+ 
e^{-\min\{1,~(p-1)\} (\sqrt{\omega} -\varepsilon) |x|} 
\\[6pt]
&\lesssim 
e^{-\min\{1,~(p-1)\} (\sqrt{\omega} -\varepsilon) |x|}
,
\end{split}
\end{equation}
where the implicit constants depend only on $p$ and $\varepsilon$. 
Putting  \eqref{eq-expo-0}, \eqref{21/10/30/16:42} and \eqref{21/10/30/17:10} together, we find that 
\begin{equation}\label{21/10/30/17:31}
|v(x, y)| 
\lesssim 
B e^{-\min\{1,~(p-1)\} (\sqrt{\omega} -\varepsilon) |x|}
+
A e^{- (\alpha - \varepsilon)|x|} 
,
\end{equation}
where the implicit constant depends only on $p$, $\alpha$ and $\varepsilon$. 

When $\min\{1,~(p-1)\} (\sqrt{\omega} -\varepsilon) \le \alpha -\varepsilon$, 
 \eqref{21/10/30/17:31} implies the desired estimate \eqref{decay-dphi}.
 On the other hand, when $\min\{1,~(p-1)\} (\sqrt{\omega} -\varepsilon) > \alpha -\varepsilon$, using \eqref{21/10/30/17:31} in the computation \eqref{21/10/28/16:36}--\eqref{eq-expo-0} instead of the assumption \eqref{21/10/30/15:33}, we can verify that 
\begin{equation}\label{21/10/30/17:51}
|v(x, y)| 
\lesssim 
\max\bigm\{
e^{-\min\{1,~2(p-1)\} (\sqrt{\omega} -\varepsilon) |x|}
,~
e^{- (\alpha - \varepsilon)|x|} 
\bigm\},
\end{equation}
where the implicit constant depends only on $p$, $A$, $B$, $\alpha$ and $\varepsilon$. 
 Updating the bound of $|v|$ in \eqref{21/10/28/16:36}--\eqref{eq-expo-0} one after another, we see that  for any integer $k \ge 1$,
\begin{equation}\label{21/10/30/17:51}
|v(x, y)| 
\lesssim 
\max\bigm\{
e^{-\min\{1,~k(p-1)\} (\sqrt{\omega} -\varepsilon) |x|}
,~
e^{- (\alpha - \varepsilon)|x|} 
\bigm\},
\end{equation}
where the implicit constant depends only on $p$, $A$, $B$, $\alpha$, $\varepsilon$ and $k$. This implies \eqref{decay-dphi}.
\end{proof}


Now, we are in a position 
to prove Proposition \ref{c1-uni-decay}. 
\begin{proof}[Proof of Proposition \ref{c1-uni-decay}]
Note that 
$\p_{\omega} \varphi(\omega, a)$ and $\p_{a} \varphi(\omega, a)$
satisfy  
\begin{align*}
\mathbf{L}_{+}(\omega, a) 
\p_{\omega} \varphi(\omega, a)
= - \varphi(\omega,a) + \widehat{E}(\omega,a),
\qquad 
\mathbf{L}_{+}(\omega, a) 
\p_{a} \varphi(\omega, a)
= \widetilde{E}(\omega,a)
, 
\end{align*}
where 
\begin{align*}
\widehat{E}(\omega, a) 
&:= 
\langle 
\mathbf{L}_{+}(\omega, a) 
\p_{\omega} \varphi 
(\omega, a)+\varphi(\omega,a),~\psi_{\omega_{p}} 
\cos y
\rangle \psi_{\omega_{p}} \cos{y},
\\[6pt] 
\widetilde{E}(\omega,a) 
&:= 
\langle 
\mathbf{L}_{+}(\omega, a) 
\p_{a} \varphi(\omega, a), 
\psi_{\omega_{p}} \cos y
\rangle \psi_{\omega_{p}} \cos{y}
.
\end{align*}
Then, by Proposition \ref{lem-up-low-b}, \eqref{eq1-11}, 
 Lemma \ref{main-lem-2} and \eqref{21/10/29/9:59}, we see that 
 $\varphi(\omega, a)$, $\widehat{E}(\omega, a)$ and $\widetilde{E}(\omega,a)$ have exponential decay with respect to $x$: 
\begin{equation}\label{21/10/29/8:56}
|\varphi(\omega,a) |
\lesssim 
e^{-(\sqrt{\omega}-\varepsilon)|x|}, 
\quad 
|\widehat{E}(\omega, a) |
+
|\widetilde{E}(\omega,a)|
\lesssim 
e^{-\frac{p+1}{2}\sqrt{\omega}|x|}
\lesssim
 e^{-(\sqrt{\omega}-\varepsilon)|x|}
,
\end{equation} 
where the implicit constants depend only on $p$ and $\varepsilon$. 
Applying Proposition \ref{thm-exd-1} 
as $G = - \varphi(\omega,a) + \widehat{E}(\omega,a)$ and $G = \widetilde{E}(\omega,a)$, 
 we find that Proposition \ref{c1-uni-decay} is true. 
\end{proof}


\section{Computation of derivatives}
\label{sec-regularlity}
 The aim of this section is to compute the second and third derivatives of $\varphi(\omega,a)$ and $\mathcal{F}_{\parallel}(\omega, a)$ with respect to $a$ and $\omega$, 
 which are used in the application of the Crandall-Rabinowitz argument~\cite{Crandall-Rabinowitz} (see Section \ref{21/10/27/11:47}). 
 Note that when $1<p<2$, even twice differentiability is not obvious, as the nonlinearity of \eqref{sp} is not twice differentiable.

Throughout this section, let $a_{0}>0$ and 
$\delta_{0}>0$ be the constants given in Lemma \ref{main-lem-2}, and let 
  $\varepsilon_{1}>0$ be the constant given in Proposition \ref{lem-up-low-b}.
 Furthermore, for $0<\varepsilon < \varepsilon_{1}$, we use $a_{\varepsilon}$ to denote the same constant as in Proposition \ref{lem-up-low-b}. 
 We will assume that $\delta_{0}$ and $\varepsilon_{1}$ are  sufficiently small dependently only on $p$ without any notice. 

Recall from Proposition \ref{c1-uni-decay} that 
 if $p> 1$ and $0< \varepsilon < \varepsilon_{1}$, then the following holds for all $(\omega, a) \in (\omega_{p}-\delta_{0},~\omega_{p}+\delta_{0}) \times (-a_{\varepsilon}, a_{\varepsilon})$ and $(x, y)\in \R \times \T$: 
\begin{equation}\label{9/7-9:33}
| \partial_{a} \varphi (\omega, a, x, y)| 
+ |\partial_{\omega} \varphi (\omega, a, x, y)|
\lesssim 
e^{-\sqrt{\omega}|x|+ 2\varepsilon|x|}, 
\end{equation} 
where the implicit constant depends only on $p$ and $\varepsilon$. 


We introduce symbols which are used in this and the next sections:

\noindent 
{\bf Notation.}
\begin{enumerate}
\item 
For $p>1$, $k\ge 1$ and $(\omega, a) \in (\omega_{p}-\delta_{0}, \omega_{p}+\delta_{0}) 
\times (-a_{0}, a_{0})$, we define $V_{k}(\omega, a)$ as  
\begin{equation}\label{def-V}
V_{k} (\omega, a):= 
\frac{d^{k}x^{p}}{dx^{k}}\Big|_{x=\varphi(\omega, a)}
= p(p-1) \cdots 
(p-k+1) \varphi(\omega, a)^{p-k}.
\end{equation} 
Note that 
\begin{equation}\label{21/11/3/13:38}
\mathbf{L}_{+}(\omega, a) 
= -\partial_{x}^{2} - \partial_{y}^{2} + \omega -V_{1}(\omega, a).
\end{equation} 
 Since $\varphi(\omega, a)$ is positive 
 (see Proposition \ref{lem-up-low-b}) and of class $C^{1}$ on 
$(\omega_{p}-\delta_{0}, \omega_{p}+\delta_{0}) 
\times (-a_{0}, a_{0})$ 
in the $H^{2}(\R \times \T)$-topology 
(see Lemma \ref{main-lem-2} 
and \eqref{eq2-10}), 
 the following holds: 
\begin{equation}\label{v-k-deriv}
\partial V_{k}(\omega, a)
=
V_{k+1}(\omega, a) 
\partial \varphi(\omega, a)
\qquad 
\mbox{everywhere in 
$\mathbb{R}\times \mathbb{T}$}, 
\end{equation}
where 
$\partial$ denotes 
either $\partial_{a}$ or $\partial_{\omega}$. 
 Furthermore, Proposition \ref{lem-up-low-b} shows that 
 if $0< \varepsilon < \varepsilon_{1}$ and $(\omega, a) \in (\omega_{p}-\delta_{0},~\omega_{p}+\delta_{0}) \times (-a_{\varepsilon}, a_{\varepsilon})$, 
 then  
\begin{equation}\label{9/7-9:32}
V_{k}(\omega, a, x, y)
\lesssim 
\varphi(\omega, a ,x, y)^{p-k} 
\lesssim
e^{(k-p) \sqrt{\omega} |x| + \varepsilon 
(k +p) |x|}, 
\end{equation}
where the implicit constants 
depend only on $p$, $\varepsilon$ and $k$. 

\item 
For $p>1$ and $(\omega, a) \in (\omega_{p}-a_{0}, \omega_{p}+a_{0}) \times (-a_{0}, a_{0})$, we define $\mathbf{T}(\omega, a)$ as  
\begin{equation*}\label{def-T}
\mathbf{T}(\omega, a)
:=
P_{\perp} \mathbf{L}_{+}
(\omega, a)|_{X_{2}}
. 
\end{equation*}
By \eqref{21/8/1/15:10}, we see that 
\begin{equation}\label{21/8/1/15:15}
\mathbf{T}(\omega_{p}, 0)
=
P_{\perp} \partial_{u}\mathcal{F}(\omega_{p},R_{\omega_{p}})|_{X_{2}}, 
\end{equation}
We have to pay attention 
to the difference between 
$\mathbf{T}(\omega, a)$ and 
$P_{\perp} \mathbf{L}_{+}(\omega, a)$; 
 In particular, 
$\mathbf{T}(\omega, a)$ has the inverse, 
but $P_{\perp} \mathbf{L}_{+}(\omega, a)$ does not. 
\end{enumerate} 


\subsection{Basic results}\label{basic}

It is easy to verify that for any $(\omega_{1},a_{1}), (\omega_{2},a_{2}) \in (\omega_{p}-\delta_{0}, \omega_{p}+\delta_{0}) \times (-a_{0}, a_{0})$, 
\begin{align}
\label{29-1}
\mathbf{T}(\omega_{1}, a_{1}) - \mathbf{T}(\omega_{2}, a_{2})
&=
- 
P_{\perp} \{ V_{1}(\omega_{1},a_{1}) - V_{1}(\omega_{2}, a_{2}) \}
, 
\\[6pt]
\label{29-2}
\mathbf{T}(\omega_{1}, a_{1})^{-1} - \mathbf{T}(\omega_{2}, a_{2})^{-1}
&=
\mathbf{T}(\omega_{1}, a_{1})^{-1}\{ \mathbf{T}(\omega_{2}, a_{2}) - \mathbf{T}(\omega_{1}, a_{1}) \}\mathbf{T}(\omega_{2}, a_{2})^{-1}
.
\end{align}
Furthermore, it is known that for any $p>1$ and $(\omega, a) \in (\omega_{p}-\delta_{0}, \omega_{p}+\delta_{0}) \times (-a_{0}, a_{0})$, 
\begin{equation}\label{28-3}
\| \mathbf{T}(\omega, a)^{-1} \|_{L^{2}
(\R \times \T) \to H^{2}(\R \times \T)}
\lesssim 1
, 
\end{equation}
where the implicit constant depends only on $p$, which together with the linearity 
implies the continuity, namely if $\lim_{n\to \infty}f_{n} =f$ in $L^{2}(\R \times \T)$, then 
\begin{equation}\label{28-4}
\lim_{n \to \infty} 
\mathbf{T}(\omega, a)^{-1}f_{n} 
=
\mathbf{T}(\omega, a)^{-1} f
\quad 
\mbox{in $H^{2}(\R \times \T)$}
. 
\end{equation}

By the boundedness of $P_{\perp}$ in $L^{2}( \R \times \T)$, the continuity of $\varphi(\omega, a)$ with respect to $(\omega, a)$
(see Lemma \ref{main-lem-2} and \eqref{eq2-10}), and \eqref{29-1} through \eqref{28-3}, we can obtain the following lemma: 
\begin{lemma}\label{c-L}
Assume $p>1$. Then, the operators $\mathbf{T}(\omega, a) \colon X_{2} \to Y_{2}$ and $\mathbf{T}(\omega, a)^{-1} \colon Y_{2} \to X_{2}$ are continuous with respect to $(\omega, a)$ on $(\omega_{p}-\delta_{0},~\omega_{p}+\delta_{0}) \times (-a_{0}, a_{0})$, namely,  
\begin{align}
\label{c-L-1}
\lim_{(\gamma_{1}, \gamma_{2}) \to (0,0)}
\| \mathbf{T}(\omega+\gamma_{1}, a+\gamma_{2}) - \mathbf{T}(\omega, a) \|_{H^{2}(\R \times \T) \to L^{2}
(\R \times \T)} =0,
\\[6pt] 
\label{c-L-2}
\lim_{(\gamma_{1}, \gamma_{2}) \to (0,0)}
\|\mathbf{T}(\omega+\gamma_{1}, a+\gamma_{2})^{-1} - \mathbf{T}(\omega, a)^{-1} \|_{L^{2}_{x, y}(\R \times \T) 
\to H^{2}(\R \times \T)} =0.
\end{align} 
\end{lemma}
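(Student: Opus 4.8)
The plan is to reduce both statements to the algebraic identities \eqref{29-1} and \eqref{29-2} together with the uniform bound \eqref{28-3}, so that the only genuinely analytic input is the $L^{\infty}$-continuity of the multiplier $V_{1}(\omega,a)=p\,\varphi(\omega,a)^{p-1}$. For \eqref{c-L-1}, by \eqref{29-1} the difference $\big(\mathbf{T}(\omega+\gamma_{1},a+\gamma_{2})-\mathbf{T}(\omega,a)\big)f$ equals $-P_{\perp}\{(V_{1}(\omega+\gamma_{1},a+\gamma_{2})-V_{1}(\omega,a))f\}$ for every $f\in H^{2}(\R\times\T)$. Since $P_{\perp}$ is bounded on $L^{2}(\R\times\T)$ and $H^{2}(\R\times\T)\hookrightarrow L^{\infty}(\R\times\T)$ (the base space is two-dimensional), I would bound its $L^{2}$-norm by a constant times $\|V_{1}(\omega+\gamma_{1},a+\gamma_{2})-V_{1}(\omega,a)\|_{L^{\infty}}\,\|f\|_{H^{2}}$. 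Thus \eqref{c-L-1} reduces to showing that $\|V_{1}(\omega+\gamma_{1},a+\gamma_{2})-V_{1}(\omega,a)\|_{L^{\infty}}\to 0$ as $(\gamma_{1},\gamma_{2})\to(0,0)$.

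Establishing this $L^{\infty}$-convergence is the crux, and the only place where the restriction $p\ge 2$ of \cite{Yamazaki2} is actually removed. The map $(\omega,a)\mapsto\varphi(\omega,a)$ is continuous in $H^{2}(\R\times\T)$ by Lemma \ref{main-lem-2} and \eqref{eq2-10}, hence continuous in $L^{\infty}$. For the power, when $1<p<2$ I would invoke the elementary Hölder inequality $|t^{p-1}-s^{p-1}|\le|t-s|^{p-1}$, valid for all $t,s\ge 0$ since $0<p-1<1$; when $p\ge 2$ I would use instead the local Lipschitz bound $|t^{p-1}-s^{p-1}|\le(p-1)M^{p-2}|t-s|$ on $[0,M]$, where $M$ is a uniform $L^{\infty}$-bound for $\varphi(\omega,a)$ furnished by Lemma \ref{main-lem-2}. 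In either case $\|\varphi(\omega+\gamma_{1},a+\gamma_{2})^{p-1}-\varphi(\omega,a)^{p-1}\|_{L^{\infty}}\to 0$, which yields \eqref{c-L-1}. The Hölder bound for $1<p<2$ is precisely what lets us dispense with the differentiability of the nonlinearity that a Lipschitz argument would require.

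Finally, \eqref{c-L-2} follows by a sandwiching argument. By \eqref{29-2} the operator $\mathbf{T}(\omega+\gamma_{1},a+\gamma_{2})^{-1}-\mathbf{T}(\omega,a)^{-1}$ factors as the composition
\[
\mathbf{T}(\omega+\gamma_{1},a+\gamma_{2})^{-1}\,\big\{\mathbf{T}(\omega,a)-\mathbf{T}(\omega+\gamma_{1},a+\gamma_{2})\big\}\,\mathbf{T}(\omega,a)^{-1}\colon\ Y_{2}\to X_{2}\to Y_{2}\to X_{2}.
\]
I would estimate its $Y_{2}\to X_{2}$ operator norm by the product of the three factors: the two outer inverse factors are $\lesssim 1$ uniformly by \eqref{28-3}, while the $X_{2}\to Y_{2}$ norm of the middle factor is dominated by the $H^{2}\to L^{2}$ norm controlled in \eqref{c-L-1}, hence tends to $0$. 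The product therefore vanishes in the limit, giving \eqref{c-L-2}. No step beyond the power-continuity requires new work, so I expect that single estimate to be the entire difficulty.
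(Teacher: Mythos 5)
Your proposal is correct and follows essentially the same route as the paper, which obtains Lemma \ref{c-L} precisely from the identities \eqref{29-1}--\eqref{29-2}, the uniform bound \eqref{28-3}, the boundedness of $P_{\perp}$ on $L^{2}(\R\times\T)$, and the continuity of $\varphi(\omega,a)$ in $H^{2}(\R\times\T)$ (hence in $L^{\infty}$ by Sobolev embedding), with \eqref{c-L-2} deduced from \eqref{c-L-1} via the resolvent-type factorization sandwiched between the uniformly bounded inverses. Your case distinction for the power map --- the H\"older bound $|t^{p-1}-s^{p-1}|\le |t-s|^{p-1}$ when $1<p<2$ and a local Lipschitz bound when $p\ge 2$ --- is exactly the device the paper itself invokes in analogous estimates (see the ``convexity'' step \eqref{28-22}).
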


It is easy to verify that the following lemma holds: 
\begin{lemma}\label{p-d}
If $f \colon (\omega_{p}-\delta_{0}, \omega_{p}+\delta_{0}) \times (-a_{0}, a_{0}) \to L^{2}(\R \times \T)$ has the partial derivatives in $L^{2}(\R \times \T)$ ($\partial_{a}f (\omega, a), \partial_{\omega}f(\omega, a) \in L^{2}(\R \times \T)$), then, 
\begin{align*}
\partial_{a} \langle f(\omega, a), \psi_{\omega_{p}}\cos{y} \rangle 
&=
\langle \partial_{a}f(\omega, a), \psi_{\omega_{p}}\cos{y} \rangle 
, 
\\[6pt]
\partial_{\omega} \langle f(\omega, a), 
\psi_{\omega_{p}}\cos{y} \rangle 
&=
\langle \partial_{\omega}f(\omega, a), 
\psi_{\omega_{p}}\cos{y} \rangle. 
\end{align*}
In particular, the following holds in 
$L^{2}(\R \times \T)$:
\begin{align*}
\partial_{a}\{P_{\perp} f(\omega, a)\}
&=
P_{\perp} \partial_{a}f(\omega, a)
, 
\\[6pt]
\partial_{\omega}\{P_{\perp} f(\omega, a)\}
&=
P_{\perp} \partial_{\omega}f(\omega, a).
\end{align*}
\end{lemma}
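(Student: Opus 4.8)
The plan is to reduce both the scalar identities and the operator identities to the single elementary fact that the linear functional $u \mapsto \langle u, \psi_{\omega_{p}}\cos y\rangle$ is bounded (hence continuous) on $L^{2}(\R \times \T)$, which follows at once from Cauchy--Schwarz, $|\langle u, \psi_{\omega_{p}}\cos y\rangle| \le \|u\|_{L^{2}(\R \times \T)}\,\|\psi_{\omega_{p}}\cos y\|_{L^{2}(\R \times \T)}$. The key is to work throughout with the $L^{2}$-notion of the partial derivative, i.e.\ convergence of difference quotients in the $L^{2}$-norm, so that interchanging a limit with this bounded functional is legitimate.

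First I would fix $\omega$ and establish the $\partial_{a}$ identity, the $\partial_{\omega}$ case being identical. By hypothesis $h^{-1}\{f(\omega, a+h) - f(\omega, a)\}$ converges to $\partial_{a}f(\omega, a)$ in $L^{2}(\R \times \T)$ as $h \to 0$. Pairing with the fixed vector $\psi_{\omega_{p}}\cos y$ and using bilinearity of the inner product gives
\[
\frac{\langle f(\omega, a+h), \psi_{\omega_{p}}\cos y\rangle - \langle f(\omega, a), \psi_{\omega_{p}}\cos y\rangle}{h}
=
\Big\langle \frac{f(\omega, a+h) - f(\omega, a)}{h},\ \psi_{\omega_{p}}\cos y \Big\rangle .
\]
Letting $h \to 0$ and invoking continuity of the functional, the right-hand side tends to $\langle \partial_{a}f(\omega, a), \psi_{\omega_{p}}\cos y\rangle$. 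This simultaneously shows that the scalar function $a \mapsto \langle f(\omega, a), \psi_{\omega_{p}}\cos y\rangle$ is differentiable and that its derivative equals $\langle \partial_{a}f(\omega, a), \psi_{\omega_{p}}\cos y\rangle$, which is the first claimed identity.

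For the ``in particular'' statement I would simply differentiate the explicit formula \eqref{21/8/5/17:39}, namely $P_{\perp}f(\omega, a) = f(\omega, a) - \langle f(\omega, a), \psi_{\omega_{p}}\cos y\rangle\,\psi_{\omega_{p}}\cos y$. Since $\psi_{\omega_{p}}\cos y$ is independent of $(\omega, a)$, applying $\partial_{a}$ in the $L^{2}$-sense and using the scalar identity just proved yields
\[
\partial_{a}\{P_{\perp}f(\omega, a)\}
=
\partial_{a}f(\omega, a) - \langle \partial_{a}f(\omega, a), \psi_{\omega_{p}}\cos y\rangle\,\psi_{\omega_{p}}\cos y
=
P_{\perp}\partial_{a}f(\omega, a)
\]
in $L^{2}(\R \times \T)$, and the $\partial_{\omega}$ case is identical. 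There is no serious obstacle here; the only point requiring a little care is to stay consistent with the $L^{2}$-notion of the partial derivative throughout, so that passing the bounded functional $u\mapsto\langle u,\psi_{\omega_{p}}\cos y\rangle$ (and hence the bounded operator $P_{\perp}$) through the limit of difference quotients is rigorously justified, rather than arguing pointwise.
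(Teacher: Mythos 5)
Your proposal is correct and is exactly the direct verification the paper has in mind (the paper offers no written proof, dismissing the lemma with ``it is easy to verify''): pairing the $L^{2}$-convergent difference quotients with the fixed vector $\psi_{\omega_{p}}\cos y$ and using boundedness of that functional, then differentiating the explicit formula \eqref{21/8/5/17:39} for $P_{\perp}$. The only cosmetic simplification available is to note that $P_{\perp}$ itself is a bounded linear operator on $L^{2}(\R\times\T)$, so it commutes with $L^{2}$-limits of difference quotients directly, but this is the same argument.
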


Since $-\partial_{x}^{2}-\partial_{y}^{2} \colon H^{2}(\R \times \T) \to L^{2}(\R \times \T)$ is bounded (hence continuous), it is easy to verify that the following lemma holds: 
\begin{lemma}\label{d-lap}
Let $f \colon (\omega_{p}-\delta_{0}, \omega_{p}+\delta_{0}) \times (-a_{0}, a_{0}) \to H^{2}(\R \times \T)$ has the partial derivatives in $H^{2}(\R \times \T)$ ($\partial_{a}f (\omega, a), \partial_{\omega}f(\omega, a) \in H^{2}(\R \times \T)$), then, 
the following hold in $L^{2}(\R \times \T)$ for all $(\omega, a) \in (\omega_{p}-\delta_{0}, \omega_{p}+\delta_{0}) \times (-a_{0},a_{0})$:
\begin{align*}
\partial_{a}
\{(-\partial_{x}^{2}-\partial_{y}^{2}+\omega) f(\omega, a) \}
&=
(-\partial_{x}^{2}-\partial_{y}^{2}+\omega) \partial_{a} f(\omega, a), 
\\[6pt]
\partial_{\omega}
\{(-\partial_{x}^{2}-\partial_{y}^{2}+\omega) f(\omega, a) \}
&=
(-\partial_{x}^{2}-\partial_{y}^{2}+\omega) \partial_{\omega} f(\omega, a) 
+
f(\omega, a)
.
\end{align*}
\end{lemma}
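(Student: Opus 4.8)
The plan is to exploit the fact, already flagged in the statement, that the Laplacian $A := -\partial_{x}^{2}-\partial_{y}^{2}$ is a \emph{bounded} linear operator from $H^{2}(\R \times \T)$ to $L^{2}(\R \times \T)$, so that it commutes with passage to limits and hence with differentiation. I would establish both identities by forming difference quotients and passing to the limit in $L^{2}(\R \times \T)$.

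For the $a$-derivative, I would write $(-\partial_{x}^{2}-\partial_{y}^{2}+\omega)f(\omega,a)=Af(\omega,a)+\omega f(\omega,a)$ and form the quotient
\[
\frac{\bigl(Af(\omega,a+\gamma)+\omega f(\omega,a+\gamma)\bigr)-\bigl(Af(\omega,a)+\omega f(\omega,a)\bigr)}{\gamma}
=A\,\frac{f(\omega,a+\gamma)-f(\omega,a)}{\gamma}+\omega\,\frac{f(\omega,a+\gamma)-f(\omega,a)}{\gamma}.
\]
By hypothesis the difference quotient $\gamma^{-1}\{f(\omega,a+\gamma)-f(\omega,a)\}$ converges to $\partial_{a}f(\omega,a)$ in $H^{2}(\R \times \T)$ as $\gamma\to 0$. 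Since $A\colon H^{2}\to L^{2}$ is bounded and the embedding $H^{2}\hookrightarrow L^{2}$ is continuous, applying $A$ and multiplying by the constant $\omega$ both preserve this convergence in $L^{2}(\R \times \T)$, giving $A\partial_{a}f+\omega\partial_{a}f=(-\partial_{x}^{2}-\partial_{y}^{2}+\omega)\partial_{a}f$ as the limit. This is the first identity.

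For the $\omega$-derivative the only new feature is that $\omega$ appears both as the spectral shift and as the differentiation variable, so a Leibniz-type splitting is needed. Using $(\omega+\gamma)f(\omega+\gamma,a)-\omega f(\omega,a)=\omega\{f(\omega+\gamma,a)-f(\omega,a)\}+\gamma f(\omega+\gamma,a)$, I would write the quotient in $\omega$ as
\[
A\,\frac{f(\omega+\gamma,a)-f(\omega,a)}{\gamma}
+\omega\,\frac{f(\omega+\gamma,a)-f(\omega,a)}{\gamma}
+f(\omega+\gamma,a).
\]
The first two terms converge in $L^{2}(\R \times \T)$ to $A\partial_{\omega}f$ and $\omega\partial_{\omega}f$ exactly as before, while the last term converges to $f(\omega,a)$ because differentiability in $\omega$ forces continuity of $f(\cdot,a)$ in the $H^{2}$- (hence $L^{2}$-) topology. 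Summing the three limits yields $(-\partial_{x}^{2}-\partial_{y}^{2}+\omega)\partial_{\omega}f+f$, which is the second identity.

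There is essentially no genuine obstacle here: the entire content is the boundedness of $A$ together with the product rule applied to the term $\omega f(\omega,a)$. The only point meriting a word of care is that all quotients must be shown to converge in the $L^{2}$-topology of the target space, which is legitimate precisely because $A$ maps $H^{2}$ into $L^{2}$ continuously and the embedding $H^{2}\hookrightarrow L^{2}$ is bounded; no real difficulty arises.
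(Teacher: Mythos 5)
Your proof is correct and is exactly the argument the paper has in mind: the paper states this lemma as an easy consequence of the boundedness of $-\partial_{x}^{2}-\partial_{y}^{2}\colon H^{2}(\R\times\T)\to L^{2}(\R\times\T)$ together with a direct computation, which is precisely your difference-quotient argument, including the Leibniz splitting of the term $\omega f(\omega,a)$ that produces the extra $f(\omega,a)$ in the $\omega$-derivative identity.
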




\begin{lemma}\label{lem-d-v1}
Assume $p>1$, and let $0< \varepsilon < \varepsilon_{1}$. 
 Then, the following hold in $L^{2}(\R \times \T)$ for all $(\omega, a) \in (\omega_{p}-\delta_{0}, \omega_{p}+\delta_{0})\times (-a_{\varepsilon}, a_{\varepsilon})$:
\begin{align}
\label{phi-1}
\partial_{a}
V_{1}(\omega, a)
&=
V_{2}(\omega, a) \partial_{a}\varphi(\omega, a)
, 
\\[6pt]
\label{phi-1-2}
\partial_{\omega}
V_{1}(\omega, a)
&=
V_{2}(\omega, a) \partial_{\omega}\varphi(\omega, a)
.
\end{align} 
\end{lemma}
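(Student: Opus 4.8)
The plan is to show that the $L^{2}(\R\times\T)$-valued map $(\omega,a)\mapsto V_{1}(\omega,a)=p\,\varphi(\omega,a)^{p-1}$ has partial derivatives given by the pointwise chain rule \eqref{v-k-deriv}, the only genuine difficulty being the regime $1<p<2$, where $V_{2}=p(p-1)\varphi^{p-2}$ carries a negative power of $\varphi$. I will prove \eqref{phi-1} in detail; the argument for \eqref{phi-1-2} is identical. Fix $(\omega,a)\in(\omega_{p}-\delta_{0},\omega_{p}+\delta_{0})\times(-a_{\varepsilon},a_{\varepsilon})$ and, for $h$ small enough that $a+h\in(-a_{\varepsilon},a_{\varepsilon})$, consider the difference quotient $h^{-1}\{V_{1}(\omega,a+h)-V_{1}(\omega,a)\}$. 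Applying the one-dimensional mean value theorem to $t\mapsto pt^{p-1}$ at each point $(x,y)$, I can write this quotient as $p(p-1)\,\xi_{h}^{\,p-2}\,h^{-1}\{\varphi(\omega,a+h)-\varphi(\omega,a)\}$, where $\xi_{h}(x,y)$ lies between $\varphi(\omega,a,x,y)$ and $\varphi(\omega,a+h,x,y)$. The resulting expression is manifestly measurable, so no issue of measurably choosing $\xi_{h}$ arises.

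For the pointwise limit I would use that $\varphi(\omega,\cdot)$ is $C^{1}$ into $H^{2}(\R\times\T)$ by Lemma \ref{main-lem-2}, together with the Sobolev embedding $H^{2}(\R\times\T)\hookrightarrow L^{\infty}(\R\times\T)$: this makes $h^{-1}\{\varphi(\omega,a+h)-\varphi(\omega,a)\}$ converge uniformly to $\partial_{a}\varphi(\omega,a)$ and forces $\xi_{h}\to\varphi(\omega,a)$ uniformly, hence $\xi_{h}^{\,p-2}\to\varphi(\omega,a)^{p-2}$ at every point, since $\varphi(\omega,a)>0$ by Proposition \ref{lem-up-low-b}. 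Thus the quotient converges pointwise everywhere to $p(p-1)\varphi(\omega,a)^{p-2}\partial_{a}\varphi(\omega,a)=V_{2}(\omega,a)\partial_{a}\varphi(\omega,a)$, matching \eqref{v-k-deriv} for $k=1$.

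The heart of the matter is producing an $h$-independent $L^{2}$ dominating function. Since $a$ and $a+h$ both lie in $(-a_{\varepsilon},a_{\varepsilon})$, the lower bound in \eqref{c-e-de} applies to $\varphi(\omega,a)$ and $\varphi(\omega,a+h)$, hence to the intermediate value $\xi_{h}$, giving $\xi_{h}(x,y)\ge C_{1}(\varepsilon)^{-1}e^{-(\sqrt{\omega}+\varepsilon)|x|}$; for $1<p<2$ this yields $\xi_{h}^{\,p-2}\lesssim e^{(2-p)(\sqrt{\omega}+\varepsilon)|x|}$. On the other hand, by the mean value theorem in $a$ and the uniform bound \eqref{c-exp-decay1}, valid for every parameter in $(-a_{\varepsilon},a_{\varepsilon})$, the factor $h^{-1}\{\varphi(\omega,a+h)-\varphi(\omega,a)\}$ is dominated by $C_{3}(\varepsilon)\,e^{-(\sqrt{\omega}-\varepsilon)|x|}$, uniformly in small $h$. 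Multiplying the two bounds, the quotient is controlled by a constant times $e^{[(1-p)\sqrt{\omega}+(3-p)\varepsilon]|x|}$, whose exponent is negative once $\varepsilon_{1}$ (hence $\varepsilon$) is small relative to $(p-1)\sqrt{\omega_{p}-\delta_{0}}$, consistent with the standing smallness assumptions on $\delta_{0}$ and $\varepsilon_{1}$. The dominating function therefore lies in $L^{2}(\R\times\T)$.

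With pointwise convergence and an $L^{2}$ dominating bound in hand, the dominated convergence theorem gives convergence of the difference quotient to $V_{2}(\omega,a)\partial_{a}\varphi(\omega,a)$ in $L^{2}(\R\times\T)$, which is \eqref{phi-1}. The proof of \eqref{phi-1-2} is the same, replacing \eqref{c-exp-decay1} by \eqref{c-exp-decay3}; the extra factor there only changes the exponent of the dominating function to $(1-p)\sqrt{\omega}+O(\varepsilon)$, still negative for $\varepsilon$ small, so nothing else changes. For $p\ge 2$ the exponent $p-2$ is nonnegative and $\xi_{h}^{\,p-2}$ is bounded by $\|\varphi\|_{L^{\infty}(\R\times\T)}^{p-2}$, so the domination is immediate and the argument applies a fortiori. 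The main obstacle is precisely the control of the negative power $\xi_{h}^{\,p-2}$ in the range $1<p<2$, which is exactly why the pointwise lower bound of Proposition \ref{lem-up-low-b} is indispensable.
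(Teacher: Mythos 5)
Your proof is correct and takes essentially the same route as the paper: both establish the pointwise chain-rule identity, use the lower bound of Proposition \ref{lem-up-low-b} to control the negative power $\varphi(\omega,a)^{p-2}$ when $1<p<2$, invoke the decay estimates of Proposition \ref{c1-uni-decay} to bound the difference quotient of $\varphi$ uniformly in the increment, and conclude by dominated convergence. The only cosmetic differences are that the paper represents the difference quotient via the fundamental theorem of calculus rather than the pointwise mean value theorem, and it obtains the $L^{2}$ limit by combining $L^{1}$ dominated convergence with a uniform $L^{\infty}$ bound (an $L^{1}$--$L^{\infty}$ interpolation), whereas you apply dominated convergence directly in $L^{2}$ with a square-integrable dominating function.
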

\begin{proof}[Proof of Lemma \ref{lem-d-v1}]
We shall prove \eqref{phi-1}. It follows from \eqref{v-k-deriv} that 
\begin{equation}\label{9/2-15:1}
\partial_{a} V_{1}(\omega, a)
=
V_{2}(\omega, a) \partial_{a} \varphi(\omega, a)
\quad 
\mbox{everywhere in $\R \times \T$}.
\end{equation}
We may assume $\sqrt{\omega} >\dfrac{1}{2}\sqrt{\omega_{p}} \gg \varepsilon_{1} >\varepsilon $.
 Then, by the fundamental theorem of calculus, \eqref{9/7-9:32} and \eqref{9/7-9:33}, we see that 
\begin{equation}\label{9/2-15:15}
\begin{split} 
\Big|
\frac{V_{1}(\omega, a+\delta)
-
V_{1}(\omega, a)
}{\delta}
\Big|
&\le 
\int_{0}^{1} V_{2}(\omega, a+\theta \delta)
|\partial_{a}\varphi(\omega, a+\theta \delta) |
\,d\theta 
\\[6pt]
&\lesssim 
e^{-\frac{p-1}{2}\sqrt{\omega_{p}}|x| 
+ \varepsilon (4+p)|x|}
\quad 
\mbox{everywhere in $\R \times \T$}, 
\end{split} 
\end{equation}
where the implicit constant depends only on $p$ and $\varepsilon$.
Hence, Lebesgue's dominated convergence theorem together with \eqref{9/2-15:1} 
 shows that 
\begin{equation}\label{9/2-15:13}
\partial_{a} V_{1}(\omega, a)
=
V_{2}(\omega, a)
\partial_{a}\varphi(\omega, a)
\quad 
\mbox{in $L^{1}(\R \times \T)$}
.
\end{equation}
Furthermore, by \eqref{9/2-15:15}, \eqref{9/7-9:32}, and \eqref{9/7-9:33}, we see that
\begin{equation}\label{9/2-15:17}
\limsup_{\delta \to 0}
\|
\frac{V_{1}(\omega, a+\delta) -V_{1}(\omega, a)}{\delta}
-
V_{2}(\omega, a)
\partial_{a}\varphi(\omega, a)
\|_{L^{\infty}(\R \times \T)}
\lesssim 1, 
\end{equation}
where the implicit constant depends only on $p$, $\varepsilon$ and $k$. 
Putting \eqref{9/2-15:13} and \eqref{9/2-15:17} together, we find that 
\eqref{phi-1} holds. Similarly, we can prove \eqref{phi-1-2}. 
\end{proof}


By \eqref{9/7-9:32}, \eqref{9/7-9:33}, 
Lemma \ref{lem-d-v1} 
and a direct computation, we can obtain the following lemma: 
\begin{lemma}\label{d-pot}
Assume $p>1$ and let $0<\varepsilon < \varepsilon_{1}$. 
If $ f \colon (\omega_{p}-\delta_{0}, \omega_{p}+\delta_{0}) \times (-a_{\varepsilon}, a_{\varepsilon}) \to H^{2}(\R \times \T)$ has the 
partial derivatives in $H^{2}(\R \times \T)$ ($\partial_{a}f(\omega, a), \partial_{\omega}f(\omega, a)\in H^{2}(\R \times \T)$), then the following holds in $L^{2}(\R \times \T)$ for all $(\omega, a) \in (\omega_{p}-\delta_{0}, \omega_{p}+\delta_{0}) \times 
(-a_{\varepsilon}, a_{\varepsilon})$:
\begin{align*}
\partial_{a}
\{ V_{1}(\omega, a)f(\omega, a) \} 
&=
V_{1}(\omega, a) \partial_{a}f(\omega, a)
+
V_{2}(\omega, a) \partial_{a} \varphi(\omega, a)f(\omega, a) , 
\\[6pt]
\partial_{\omega}
\{ V_{1}(\omega, a) f(\omega, a) \} 
&=
V_{1}(\omega, a) \partial_{\omega}f(\omega, a)
+
V_{2}(\omega, a)\partial_{\omega} \varphi(\omega, a)f(\omega, a) 
.
\end{align*}
\end{lemma}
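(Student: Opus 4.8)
The plan is to prove both identities by the usual product-rule decomposition of difference quotients, carried out directly in the $L^{2}(\R \times \T)$ norm; I will describe the $\partial_{a}$ identity, the $\partial_{\omega}$ case being entirely analogous with \eqref{phi-1-2} used in place of \eqref{phi-1} (and noting that all of the $\omega$-dependence of $V_{1}(\omega,a)=p\,\varphi(\omega,a)^{p-1}$ passes through $\varphi$, so no extra term arises). Fix $(\omega,a)$ in the stated range and a small increment $\delta$. First I would split
\begin{align*}
&\frac{V_{1}(\omega, a+\delta) f(\omega, a+\delta) - V_{1}(\omega, a) f(\omega, a)}{\delta} \\
&\qquad = \frac{V_{1}(\omega, a+\delta) - V_{1}(\omega, a)}{\delta}\, f(\omega, a+\delta)
+ V_{1}(\omega, a)\, \frac{f(\omega, a+\delta) - f(\omega, a)}{\delta},
\end{align*}
and then let $\delta \to 0$ in $L^{2}(\R \times \T)$ term by term.

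For the first term, Lemma \ref{lem-d-v1} gives $\dfrac{V_{1}(\omega, a+\delta) - V_{1}(\omega, a)}{\delta} \to V_{2}(\omega, a)\partial_{a}\varphi(\omega, a)$ in $L^{2}$, while the continuity of $f$ in the $H^{2}$-topology (a consequence of its differentiability) together with the embedding $H^{2}(\R \times \T) \hookrightarrow L^{\infty}(\R \times \T)$ gives $f(\omega, a+\delta) \to f(\omega, a)$ in $L^{\infty}$, with $\|f(\omega, a+\delta)\|_{L^{\infty}}$ bounded for $\delta$ small. Since a product of an $L^{2}$-convergent factor and a uniformly bounded $L^{\infty}$-convergent factor converges in $L^{2}$ to the product of the limits, this term tends to $V_{2}(\omega, a)\partial_{a}\varphi(\omega, a) f(\omega, a)$ in $L^{2}$.

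For the second term, I would first record that $V_{1}(\omega, a) = p\,\varphi(\omega, a)^{p-1} \in L^{2}(\R \times \T)$: by \eqref{9/7-9:32} with $k=1$ one has the pointwise bound $V_{1}(\omega, a, x, y) \lesssim e^{((1-p)\sqrt{\omega} + (1+p)\varepsilon)|x|}$, whose exponent is negative once $\varepsilon_{1}$ is chosen small enough depending only on $p$ (which we are free to assume). The hypothesis that $\partial_{a}f(\omega, a)$ exists in $H^{2}$ means $\dfrac{f(\omega, a+\delta) - f(\omega, a)}{\delta} \to \partial_{a} f(\omega, a)$ in $H^{2} \hookrightarrow L^{\infty}$, so
\[
\Big\| V_{1}(\omega, a)\Big(\tfrac{f(\omega, a+\delta) - f(\omega, a)}{\delta} - \partial_{a} f(\omega, a)\Big)\Big\|_{L^{2}}
\le \|V_{1}(\omega, a)\|_{L^{2}}\, \Big\|\tfrac{f(\omega, a+\delta) - f(\omega, a)}{\delta} - \partial_{a} f(\omega, a)\Big\|_{L^{\infty}} \longrightarrow 0,
\]
whence this term converges to $V_{1}(\omega, a)\partial_{a} f(\omega, a)$ in $L^{2}$. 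Adding the two limits gives the first identity of the lemma, and the same argument with \eqref{phi-1-2} and \eqref{9/7-9:33} gives the $\partial_{\omega}$ identity.

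The only delicate point, and hence the step I would treat most carefully, is the \emph{mode of convergence}: the two difference quotients converge in different norms — $L^{2}$ for the $V_{1}$-factor (via Lemma \ref{lem-d-v1}) and $L^{\infty}$ for the $f$-factor (via the Sobolev embedding) — and they must be combined so that each product converges in $L^{2}$. This is precisely where the uniform exponential estimates \eqref{9/7-9:32}--\eqref{9/7-9:33}, and therefore the positivity and decay provided by Proposition \ref{lem-up-low-b}, are used, ensuring both $V_{1}(\omega, a) \in L^{2}$ and the $L^{2}$-convergence supplied by Lemma \ref{lem-d-v1}. No new pointwise computation beyond the scalar product rule is required.
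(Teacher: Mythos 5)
Your proof is correct and takes essentially the approach the paper intends: the paper omits any detailed argument for this lemma, stating only that it follows from the decay bounds \eqref{9/7-9:32}--\eqref{9/7-9:33}, Lemma \ref{lem-d-v1}, and a ``direct computation,'' and your product-rule splitting of the difference quotient --- with Lemma \ref{lem-d-v1} handling the $V_{1}$-factor and the Sobolev embedding $H^{2}(\R \times \T) \hookrightarrow L^{\infty}(\R \times \T)$ reconciling the $L^{2}$ and $L^{\infty}$ modes of convergence --- is precisely that computation carried out in full.
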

\begin{remark}
Observe that we assume the differentiability of $f(\omega, a)$ in $H^{2}(\R \times \T)$, but 
the derivatives of the product $V_{1}(\omega, a)f(\omega, a)$ is taken in the $L^{2}(\R \times \T)$-sense. 
\end{remark}




The following lemma immediately follows 
from Lemmas \ref{d-lap} and \ref{d-pot}: 
\begin{lemma}\label{d-L}
Assume $p>1$ and let $0<\varepsilon < \varepsilon_{1}$. 
 If $f \colon (\omega_{p}-\delta_{0}, \omega_{p}+\delta_{0}) \times (-a_{\varepsilon}, a_{\varepsilon}) \to H^{2}(\R \times \T)$ has the partial derivatives 
in $H^{2}(\R \times \T)$ ($\partial_{a}f (\omega, a), \partial_{\omega}f(\omega, a) \in H^{2}(\R \times \T)$), then, 
the following hold in $L^{2}(\R \times \T)$ for all $(\omega, a) \in (\omega_{p}-\delta_{0}, \omega_{p}+\delta_{0})$:
\begin{align*}
\partial_{a} \{ 
\mathbf{L}_{+}(\omega, a) f(\omega, a)
\}
&=
\mathbf{L}_{+}(\omega, a) \partial_{a} f(\omega, a)
-
V_{2}(\omega, a) \partial_{a}\varphi(\omega, a) f(\omega, a)
, 
\\[6pt]
\partial_{\omega} 
\{
\mathbf{L}_{+}(\omega, a) f(\omega, a)
\} 
&=
\mathbf{L}_{+}(\omega, a) 
\partial_{\omega} f(\omega, a)
+
f(\omega, a)
-
V_{2}(\omega, a) \partial_{\omega}\varphi(\omega, a) f(\omega, a)
.
\end{align*}
\end{lemma}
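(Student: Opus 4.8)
The plan is to reduce the statement entirely to the two preceding lemmas by using the explicit decomposition of $\mathbf{L}_{+}(\omega,a)$ recorded in \eqref{21/11/3/13:38}. Writing
\[
\mathbf{L}_{+}(\omega, a) f(\omega, a) = (-\partial_{x}^{2}-\partial_{y}^{2}+\omega)\, f(\omega, a) - V_{1}(\omega, a)\, f(\omega, a),
\]
and invoking the linearity of differentiation, I would differentiate the two summands separately and then regroup. The first summand is governed by Lemma \ref{d-lap} and the second by Lemma \ref{d-pot}; the hypotheses of both lemmas are precisely the standing assumption that $f$ has its partial derivatives in $H^{2}(\R\times\T)$, so each applies verbatim.

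For the $a$-derivative, Lemma \ref{d-lap} gives $\partial_{a}\{(-\partial_{x}^{2}-\partial_{y}^{2}+\omega) f\} = (-\partial_{x}^{2}-\partial_{y}^{2}+\omega)\partial_{a} f$ with no extra term, since $\omega$ is held fixed; Lemma \ref{d-pot} gives $\partial_{a}\{V_{1} f\} = V_{1}\,\partial_{a} f + V_{2}\,\partial_{a}\varphi\, f$. Subtracting and recombining $(-\partial_{x}^{2}-\partial_{y}^{2}+\omega)\partial_{a} f - V_{1}\partial_{a} f$ back into $\mathbf{L}_{+}(\omega,a)\partial_{a} f$ leaves exactly the residual $-V_{2}\,\partial_{a}\varphi\, f$, which is the first asserted identity. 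For the $\omega$-derivative the only new feature is the explicit $\omega$ inside $-\partial_{x}^{2}-\partial_{y}^{2}+\omega$, which produces the additional term $f(\omega,a)$ through Lemma \ref{d-lap}; combining this with $\partial_{\omega}\{V_{1} f\} = V_{1}\,\partial_{\omega} f + V_{2}\,\partial_{\omega}\varphi\, f$ from Lemma \ref{d-pot} and regrouping as before yields the second identity, now carrying the extra summand $f(\omega,a)$.

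There is no genuine obstacle here, which is why the statement is phrased as an immediate consequence: all the analytic content already resides in Lemmas \ref{d-lap} and \ref{d-pot}, whose proofs rely on the exponential decay bounds \eqref{9/7-9:32} and \eqref{9/7-9:33} together with dominated convergence to upgrade the pointwise product rule to a genuine $L^{2}$-identity. The only point deserving a word of care is the one flagged in the remark after Lemma \ref{d-pot}: although $f$ is differentiable in the $H^{2}$-topology, the derivatives of the products $V_{1}f$ and hence of $\mathbf{L}_{+}f$ exist only in the $L^{2}$-sense. This is harmless, since each of the two summands in the decomposition has its partial derivatives in $L^{2}(\R\times\T)$ by the cited lemmas, and $L^{2}(\R\times\T)$ is closed under addition, so their sum differentiates termwise in $L^{2}$ as claimed.
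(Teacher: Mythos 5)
Your proposal is correct and takes exactly the paper's route: the paper proves Lemma \ref{d-L} by noting it ``immediately follows from Lemmas \ref{d-lap} and \ref{d-pot}'', i.e., by the same decomposition $\mathbf{L}_{+}(\omega,a) = -\partial_{x}^{2}-\partial_{y}^{2}+\omega - V_{1}(\omega,a)$ and termwise differentiation in $L^{2}(\R \times \T)$ that you carry out. Your closing remark about the identities holding only in the $L^{2}$-sense matches the paper's own caveat following Lemma \ref{d-pot}, so nothing is missing.
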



\begin{lemma}\label{d-inverse-L}
Assume $p>1$ and $0< \varepsilon <\varepsilon_{1}$. If $f \colon (\omega_{p}-\delta_{0}, \omega_{p}+\delta_{0}) \times (-a_{\varepsilon}, a_{\varepsilon}) \to Y_{2}$ has the partial derivatives in $Y_{2}$ ($\partial_{a}f(\omega, a), \partial_{\omega}f(\omega, a) \in Y_{2}$), then the following hold in $H^{2}(\R \times \T)$ for all $(\omega, a) \in (\omega_{p}-\delta_{0}, \omega_{p}+\delta_{0}) \times (-a_{\varepsilon}, a_{\varepsilon})$: If 
\begin{equation*}\label{9/11-16:25}
\widetilde{f}(\omega, a):= \mathbf{T}(\omega, a)^{-1} f(\omega, a)
, 
\end{equation*}
then 
\begin{align}
\label{identity-3}
\partial_{a} \widetilde{f}(\omega, a)
&=
\mathbf{T}(\omega, a)^{-1} \partial_{a} f(\omega, a)
+
\mathbf{T}(\omega, a)^{-1} 
P_{\perp}\{V_{2}(\omega, a) \partial_{a}\varphi(\omega, a) 
\widetilde{f}(\omega, a)
\}
, 
\\[6pt]
\label{identity-4}
\partial_{\omega} \widetilde{f}(\omega, a)
&=
\mathbf{T}(\omega, a)^{-1} \partial_{\omega} f(\omega, a)
-
\mathbf{T}(\omega, a)^{-1} 
\widetilde{f}(\omega, a)
+
\mathbf{T}(\omega, a)^{-1} 
P_{\perp}
\big\{
V_{2}(\omega, a) \partial_{\omega}\varphi(\omega, a)
\widetilde{f}(\omega, a)
\big\}
. 
\end{align} 
\end{lemma}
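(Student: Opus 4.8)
The plan is to establish the differentiability of $\widetilde{f}$ and the two formulas \eqref{identity-3}--\eqref{identity-4} simultaneously, by passing to the limit in the difference quotients of $\widetilde{f}(\omega,a)=\mathbf{T}(\omega,a)^{-1}f(\omega,a)$. I would treat the $a$-direction first. Writing, for small $\delta$,
\[
\frac{\widetilde{f}(\omega,a+\delta)-\widetilde{f}(\omega,a)}{\delta}
=
\mathbf{T}(\omega,a+\delta)^{-1}\frac{f(\omega,a+\delta)-f(\omega,a)}{\delta}
+\frac{\mathbf{T}(\omega,a+\delta)^{-1}-\mathbf{T}(\omega,a)^{-1}}{\delta}f(\omega,a),
\]
obtained by adding and subtracting $\mathbf{T}(\omega,a+\delta)^{-1}f(\omega,a)$. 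The first term converges in $H^{2}(\R\times\T)$ to $\mathbf{T}(\omega,a)^{-1}\partial_{a}f(\omega,a)$ by the uniform bound \eqref{28-3}, the continuity \eqref{c-L-2} (equivalently \eqref{28-4}), and the hypothesis $\partial_{a}f(\omega,a)\in Y_{2}$.

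For the second term I would use the resolvent-type identity \eqref{29-2} together with \eqref{29-1} and the relation $\widetilde{f}(\omega,a)=\mathbf{T}(\omega,a)^{-1}f(\omega,a)$ to rewrite it as
\[
\mathbf{T}(\omega,a+\delta)^{-1}\,P_{\perp}\!\left[\frac{V_{1}(\omega,a+\delta)-V_{1}(\omega,a)}{\delta}\,\widetilde{f}(\omega,a)\right],
\]
where Lemma \ref{p-d} is used to commute $P_{\perp}$ past the multiplication. Since $\widetilde{f}(\omega,a)\in X_{2}\subset H^{2}(\R\times\T)\hookrightarrow L^{\infty}(\R\times\T)$ and, by \eqref{phi-1} of Lemma \ref{lem-d-v1}, the potential difference quotient converges to $V_{2}(\omega,a)\partial_{a}\varphi(\omega,a)$ in $L^{2}(\R\times\T)$, the bracketed term converges in $L^{2}(\R\times\T)$ to $V_{2}(\omega,a)\partial_{a}\varphi(\omega,a)\widetilde{f}(\omega,a)$. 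Applying $P_{\perp}$ and then $\mathbf{T}(\omega,a+\delta)^{-1}$, and again invoking \eqref{28-3} and \eqref{28-4}, I obtain convergence in $H^{2}(\R\times\T)$ to $\mathbf{T}(\omega,a)^{-1}P_{\perp}\{V_{2}(\omega,a)\partial_{a}\varphi(\omega,a)\widetilde{f}(\omega,a)\}$. Summing the two limits gives \eqref{identity-3}.

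The $\omega$-direction is identical except for the linear term $\omega$ in $\mathbf{L}_{+}(\omega,a)=-\partial_{x}^{2}-\partial_{y}^{2}+\omega-V_{1}(\omega,a)$. Computing the operator difference directly from this definition, $\mathbf{T}(\omega,a)-\mathbf{T}(\omega+\gamma,a)=P_{\perp}[-\gamma+(V_{1}(\omega+\gamma,a)-V_{1}(\omega,a))]|_{X_{2}}$ now carries the extra additive constant $-\gamma$, which upon dividing by $\gamma$ and passing to the limit produces the additional term $-\mathbf{T}(\omega,a)^{-1}\widetilde{f}(\omega,a)$ (here $P_{\perp}\widetilde{f}(\omega,a)=\widetilde{f}(\omega,a)$ since $\widetilde{f}(\omega,a)\in X_{2}\subset Y_{2}$). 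Combined with \eqref{phi-1-2} of Lemma \ref{lem-d-v1} in place of \eqref{phi-1}, this yields \eqref{identity-4}.

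I expect the main obstacle to be the $L^{2}(\R\times\T)$-convergence of the potential difference quotient in the regime $1<p<2$, where $V_{2}(\omega,a)=p(p-1)\varphi(\omega,a)^{p-2}$ is singular as $\varphi\to 0$. This is precisely where the lower bound of Proposition \ref{lem-up-low-b} and the derivative decay estimates of Proposition \ref{c1-uni-decay} are indispensable: by \eqref{9/7-9:32} and \eqref{9/7-9:33} the product $V_{2}(\omega,a)\partial_{a}\varphi(\omega,a)$ nonetheless decays like $e^{(1-p)\sqrt{\omega}|x|+O(\varepsilon)|x|}$, so that Lemma \ref{lem-d-v1} supplies both the required $L^{2}$-limit and a $\delta$-uniform dominating function. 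The only remaining point requiring care is that each limit may be taken inside $\mathbf{T}(\cdot)^{-1}$, which is legitimate thanks to the uniform bound \eqref{28-3} and the operator continuity \eqref{c-L-2}.
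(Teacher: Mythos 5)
Your proof is correct, and while it rests on the same ingredients as the paper's (the resolvent identity \eqref{29-2} with \eqref{29-1}, the $L^{2}$-differentiability of $V_{1}$ from Lemma \ref{lem-d-v1}, the uniform bound \eqref{28-3}, and the operator continuity of Lemma \ref{c-L}), your decomposition is genuinely different and in fact more economical. The paper adds and subtracts $\mathbf{T}(\omega,a)^{-1}f(\omega,a+\delta)$, so its operator difference quotient acts on the \emph{moving} argument $f(\omega,a+\delta)$; after the resolvent identity this forces a further three-way splitting (see \eqref{lem-p4}) whose two error terms must be killed separately, one by the convexity estimate $\|\varphi(\omega,a+\delta)-\varphi(\omega,a)\|_{L^{\infty}(\R\times\T)}^{p-1}$ in \eqref{28-22} (needed precisely because $x\mapsto x^{p-1}$ is not Lipschitz when $1<p<2$) and one by the $\delta$-uniform $L^{\infty}$ bound \eqref{29-5} combined with Lemma \ref{c-L} in \eqref{28-23}. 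You instead add and subtract $\mathbf{T}(\omega,a+\delta)^{-1}f(\omega,a)$, so the operator difference quotient acts on the \emph{fixed} $f(\omega,a)$, and the resolvent identity immediately produces $\widetilde{f}(\omega,a)=\mathbf{T}(\omega,a)^{-1}f(\omega,a)$ inside the bracket; the limit then follows from the $L^{2}$-convergence of the $V_{1}$-difference quotient (Lemma \ref{lem-d-v1}) multiplied against the fixed function $\widetilde{f}(\omega,a)\in X_{2}\subset H^{2}(\R\times\T)\hookrightarrow L^{\infty}(\R\times\T)$, plus \eqref{28-3} and \eqref{c-L-2}. This removes both error terms and the convexity argument entirely, at no extra cost. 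Two minor remarks: the appeal to Lemma \ref{p-d} to ``commute $P_{\perp}$ past the multiplication'' is unnecessary, since the projection comes directly out of \eqref{29-1} (the definition of $\mathbf{T}$); and for the $\omega$-direction you correctly observe that \eqref{29-1} as stated holds only when $\omega_{1}=\omega_{2}$, and your explicit inclusion of the additive $-\gamma$ term in the operator difference is exactly what generates $-\mathbf{T}(\omega,a)^{-1}\widetilde{f}(\omega,a)$ in \eqref{identity-4} --- a step the paper disposes of with ``similarly,'' so your treatment is actually more complete on this point.
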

\begin{proof}[Proof of Lemma \ref{d-inverse-L}]
We shall prove \eqref{identity-3}. 
We compute the derivative of $\widetilde{f}(\omega, a):=\mathbf{T}(\omega, a)^{-1} f(\omega, a)$ with respect to $a$ in accordance with the definition. Let $(\omega, a ) \in (\omega_{p}-\delta_{0}, \omega_{p}+\delta_{0}) \times (-a_{\varepsilon}, a_{\varepsilon})$, and 
let $\delta$ be a constant to be taken $\delta \to 0$. 
It is easy to see that 
the following identity holds everywhere in $\R \times \T$: 
\begin{equation} \label{lem-p1}
\begin{split}
&
\frac{\widetilde{f}(\omega, a+\delta)-\widetilde{f}(\omega, a)}{\delta}
=
\frac{ \mathbf{T}(\omega, a + \delta)^{-1} 
f(\omega, a+\delta)
- 
\mathbf{T}_{+} (\omega, a)^{-1} 
f(\omega, a)}{\delta}
\\[6pt]
&=
\delta^{-1}\{ 
\mathbf{T} (\omega, a + \delta)^{-1} 
- 
\mathbf{T}(\omega, a)^{-1}
\big\} 
f(\omega, a+\delta )
\\[6pt]
&\quad +
\mathbf{T}(\omega, a)^{-1} 
\frac{f(\omega, a+\delta) - f(\omega, a)}{\delta}
.
\end{split}
\end{equation}
Using \eqref{29-2} and \eqref{29-1}, we can rewrite 
the first term on the right-hand side of \eqref{lem-p1} as follows: 
\begin{equation}\label{lem-p4}
\begin{split} 
&
\delta^{-1}\{ 
\mathbf{T}(\omega, a + \delta)^{-1} 
- 
\mathbf{T}(\omega, a)^{-1}
\big\} 
f(\omega, a+\delta)
\\[6pt]
&=
-
\delta^{-1}
\mathbf{T}(\omega, a)^{-1} 
\big\{ 
\mathbf{T}(\omega, a+\delta)
-
\mathbf{T}(\omega, a)
\big\}
\mathbf{T}(\omega, a+\delta)^{-1}
f(\omega, a+\delta)
\\[6pt]
&= 
\mathbf{T}(\omega, a)^{-1} 
P_{\perp}
\Big( 
\frac{ V_{1}(\omega, a+\delta)
-
V_{1} (\omega, a)
}{\delta}
\mathbf{T}(\omega, a+\delta)^{-1}
f(\omega, a+\delta)
\Big)
\\[6pt]
&= 
\mathbf{T}(\omega, a)^{-1} 
P_{\perp}
\Big(
\frac{ V_{1}(\omega, a+\delta)
-
V_{1}(\omega, a)
}{\delta}
\mathbf{T}(\omega, a)^{-1}
f(\omega, a)
\Big)
\\[6pt]
&\quad 
+ 
\mathbf{T}(\omega, a)^{-1} 
P_{\perp}
\Big( 
\{V_{1}(\omega, a+\delta)
-
V_{1}(\omega, a)
\big\}
\mathbf{T}(\omega, a)^{-1}
\frac{f(\omega, a+\delta)
-
f(\omega, a)}{\delta}
\Big)
\\[6pt]
&\quad 
+
\mathbf{T}(\omega, a)^{-1} 
P_{\perp}
\Big( 
\frac{ 
V_{1}(\omega, a+\delta)
-
V_{1}(\omega, a)
}{\delta}
\big\{
\mathbf{T}(\omega, a+\delta)^{-1}
-
\mathbf{T}(\omega, a)^{-1}
\big\}
f(\omega, a+\delta)
\Big)
.
\end{split}
\end{equation}

We consider the first term on the right-hand side of \eqref{lem-p4}. 
It follows from the continuity of $\mathbf{T}(\omega, a)^{-1} \colon L^{2}(\R \times \T) \to H^{2}(\R \times \T)$ (see \eqref{28-4}) and the differentiability of $V_{1}(\omega, a)$ in $L^{2}(\R \times \T)$ (see Lemma \ref{lem-d-v1}) that 
\begin{equation}\label{28-21}
\begin{split}
&
\lim_{\delta \to 0}
\mathbf{T}(\omega, a)^{-1} 
P_{\perp}
\Big( 
\frac{ 
V_{1}(\omega, a+\delta)
-
V_{1}(\omega, a)
}{\delta}
\mathbf{T}(\omega, a)^{-1}
f(\omega, a)
\Big)
\\[6pt]
&=
\mathbf{T}(\omega, a)^{-1} 
P_{\perp} \{ V_{2}(\omega, a) \partial_{a}\varphi(\omega, a)\mathbf{T}(\omega, a)^{-1}
f(\omega, a)\}
\quad 
\mbox{in $H^{2}(\R \times \T)$}
.
\end{split}
\end{equation}

Next, we consider the second term on the right-hand side of \eqref{lem-p4}. 
Let $1 < p \leq 2$
By \eqref{28-3}, a convexity ($0<p-1<1$ 
), the continuity of $\varphi(\omega, a)$ with respect to $a$ in $H^{2}(\R \times \T)$, and the differentiability of 
$f(\omega, a)$, we see that 
\begin{equation}\label{28-22}
\begin{split}
&
\|
\mathbf{T}(\omega, a)^{-1} 
P_{\perp}
\Big( 
\{ V_{1}(\omega, a+\delta)
-
V_{1}(\omega, a)
\}
\mathbf{T}(\omega, a)^{-1}
\frac{f(\omega, a+\delta)
-
f(\omega, a)}{\delta}
\Big)
\|_{H^{2}_{x, y}(\R \times \T)}
\\[6pt]
&\lesssim 
\|
\{ 
V_{1}(\omega, a+\delta)
-
V_{1}(\omega, a)
\}
\mathbf{T}(\omega, a)^{-1}
\frac{f(\omega, a+\delta)
-
f(\omega, a)}{\delta}
\|_{L^{2}_{x, y}(\R \times \T)}
\\[6pt]
&\le 
\| \varphi(\omega, a+\delta)
-
\varphi(\omega, a)
\|_{L^{\infty}(\R \times \T)}^{p-1}
\Big\| 
\frac{f(\omega, a+\delta)
-
f(\omega, a)}{\delta}
\Big\|_{L^{2}_{x, y}(\R \times \T)}
\\[6pt]
&\to 0
\quad \mbox{as $\delta \to 0$}
.
\end{split}
\end{equation}
We can prove the case of 
$p \geq 2$ similarly. 

We consider the last term on the right-hand side of \eqref{lem-p4}. 
By the fundamental theorem of calculus, Lemma \ref{lem-d-v1}, 
\eqref{9/7-9:32}, and \eqref{9/7-9:33}, 
we see that 
\begin{equation}\label{29-5}
\|
\frac{ V_{1}(\omega, a+\delta) - V_{1}(\omega, a)}{\delta}
\|_{L^{\infty}(\R \times \T)}
=
\|
\int_{0}^{1} V_{2}(\omega, a+\theta \delta) \partial_{a}\varphi (\omega, a+\theta \delta) 
\,d \theta 
\|_{L^{\infty}(\R \times \T)}
\lesssim 1,
\end{equation}
where the implicit constant depends only on $p$ and $\varepsilon$. 
Then, by \eqref{28-3}, \eqref{29-5} and Lemma \ref{c-L}, we see that 
\begin{equation}\label{28-23}
\begin{split} 
&
\|\mathbf{T}(\omega, a)^{-1} 
P_{\perp}
\Big( 
\frac{ 
V_{1}(\omega, a+\delta)
-
V_{1}(\omega, a)
}{\delta}
\{
\mathbf{T}(\omega, a+\delta)^{-1}
-
\mathbf{T}(\omega, a)^{-1}
\}
f(\omega, a+\delta)
\Big)
\|_{H_{x, y}^{2}(\R \times \T)}
\\[6pt]
&\lesssim 
\|
\frac{ 
V_{1}(\omega, a+\delta)
-
V_{1}(\omega, a)
}{\delta}
\{
\mathbf{T}(\omega, a+\delta)^{-1}
-
\mathbf{T}(\omega, a)^{-1}
\}
f(\omega, a+\delta)
\|_{L^{2}(\R \times \T)}
\\[6pt]
&\to 0
\quad 
\mbox{as $\delta \to 0$}
.
\end{split}
\end{equation}

It remains to consider the second term on the right-hand side of \eqref{lem-p1}.
By the continuity of $\mathbf{T}(\omega, a )^{-1} \colon L^{2}(\R \times \T) \to H^{2}(\R \times \T)$ (see \eqref{28-4}) and the differentiability of $f(\omega, a)$, we see that 
\begin{equation}\label{lem-p2}
\lim_{\delta \to 0}
\mathbf{T}(\omega, a )^{-1}
\frac{f(\omega, a+\delta) - f(\omega, a)}{\delta}
=
\mathbf{T}(\omega, a )^{-1} \partial_{a}f(\omega, a)
\quad 
\mbox{in $H^{2}(\R \times \T)$}.
\end{equation}

Putting the above computations together, we find that \eqref{identity-3} holds. Similarly, we can prove \eqref{identity-4}. 

\end{proof}


\subsection{Second derivatives of $\boldsymbol{\varphi(\omega, a)}$}
\label{sec-d2-phi}

In this section, we compute the second derivatives of $\varphi(\omega, a)$ with respect to $\omega$ and $a$. We emphasize that when $1<p<2$, the twice differentiability of $\varphi(\omega, a)$ is not obvious, as the nonlinearity of the equation \eqref{sp} is not $C^{2}$.

Recall that $\eta \colon (\omega_{p} - \delta_{0}, \omega_{p} 
+ \delta_{0}) \times (-a_{0}, a_{0}) \to X_{2}$ is $C^{1}$ 
and $P_{\perp} \mathcal{F}(\omega, \varphi(\omega, a))=0$ (see Lemma \ref{main-lem-2}). 
Observe from a direct computation that the following holds in $L^{2}(\R \times \T)$: 
\begin{align}
\label{d-phi-p-a}
\partial_{a} \{
V_{0}(\omega, a) \}
&=
p \varphi (\omega, a)^{p-1} \partial_{a} \varphi(\omega, a) 
=
V_{1}(\omega, a) \{ \psi_{\omega_{p}}\cos{y}+ \partial_{a} h(\omega, a)\}
, 
\\[6pt]
\label{d-phi-p-w}
\partial_{\omega}
\{V_{0}(\omega, a)\}
&=
p \varphi(\omega, a)^{p-1} \partial_{\omega} \varphi(\omega, a)
=
V_{1}(\omega, a) \partial_{\omega} h(\omega, a) . 
\end{align}
Note that Lemma \ref{main-lem-2} shows that  
\begin{align}
\label{der-w-h} 
\partial_{\omega} \varphi(\omega, a)
&=-\mathbf{T}(\omega, a)^{-1}
\{ R_{\omega_{p}} + \eta(\omega, a) \}
,
\\[6pt]
\label{der-a-h}
\partial_{a}\varphi(\omega, a)
&=
\psi_{\omega_{p}} \cos{y}
- \mathbf{T}(\omega, a)^{-1}P_\perp
\mathbf{L}_{+}(\omega, a)
\psi_{\omega_{p}} \cos{y}
. 
\end{align}


In order to prove the continuity of the second and higher derivatives of $\varphi(\omega, a)$ in $H^{2}(\R \times \T)$, 
we prepare the following lemma: 
\begin{lemma}\label{20/10/21/15:57}
Assume $p>1$ 
and 
let $k, j$ be integers satisfying 
$k > p$ and 
$j \geq 1$.
Then, there exists $0< \varepsilon(k,j)<\varepsilon_{1}$ depending only on $p$, $k$, and $j$ with the following property: 
Let $0<\varepsilon < \varepsilon(k,j)$, 
and let $g$ be a function in $C( (\omega_{p}-\delta_{0}, \omega_{p}+\delta_{0}) \times (-a_{\varepsilon}, a_{\varepsilon}), 
L^{2}(\R \times \T))$. 
Assume that for any $(\omega, a)\in (\omega_{p}-\delta_{0}, \omega_{p}+\delta_{0})\times (-a_{\varepsilon}, a_{\varepsilon})$ and $(x, y)\in \R \times \T$: 
\begin{equation}\label{20/10/29/14:59}
|g(\omega, a, x, y)|
\lesssim 
e^{- k \sqrt{\omega}|x| + j \varepsilon |x|}
, 
\end{equation}
where the implicit constant depends only on $p$, $k$, $j$ and $\varepsilon$. Furthermore, assume that 
\begin{equation}\label{20/10/29/16:37}
\lim_{(\gamma_{1}, \gamma_{2})\to (0,0)} 
\| 
e^{(k-1) \sqrt{\omega}|x| + k \varepsilon |x| }
\big\{
g(\omega+\gamma_{1},a+\gamma_{2})- g(\omega, a)
\big\} 
\|_{L^{2}(\R \times \T)}
= 0.
\end{equation}
Then, $\mathbf{T}(\omega, a)^{-1}P_{\perp}
\big\{ 
V_{k}(\omega, a)g(\omega, a) 
\big\}
$ is continuous with respect to $(\omega, a)$ on $(\omega_{p}-\delta_{0}, \omega_{p}+\delta_{0}) \times (-a_{\varepsilon}, a_{\varepsilon})$ in the $H^{2}(\R \times \T)$-topology.
\end{lemma}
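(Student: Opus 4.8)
The plan is to prove the asserted $H^{2}$-continuity in two stages: first show that the $L^{2}(\R\times\T)$-valued map $(\omega,a)\mapsto F(\omega,a):=V_{k}(\omega,a)\,g(\omega,a)$ is continuous, and then combine this with the uniform bound \eqref{28-3} and the operator-norm continuity \eqref{c-L-2} of $\mathbf{T}(\omega,a)^{-1}$. Before anything, I would fix the threshold $\varepsilon(k,j)$. By the growth bound \eqref{9/7-9:32} for $V_{k}$ together with the hypothesis \eqref{20/10/29/14:59} on $g$, the product satisfies $|F(\omega,a)|\lesssim e^{-p\sqrt{\omega}|x|+\varepsilon(k+p+j)|x|}$ pointwise. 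Setting $\varepsilon(k,j):=\min\{\varepsilon_{1},\,p\sqrt{\omega_{p}-\delta_{0}}/(k+p+j)\}$ (shrinking $\delta_{0}$ beforehand if needed) makes this exponent uniformly negative, so $F(\omega,a)\in L^{2}$ with exponential decay and $\sup\|F(\omega',a')\|_{L^{2}}$ is finite and uniform over a neighborhood of any base point. This is where $k>p$, $j\ge1$ and the smallness of $\varepsilon$ enter.

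For the $L^{2}$-continuity of $F$, I would fix $(\omega,a)$ and decompose, for $(\omega',a')$ near $(\omega,a)$,
\[
F(\omega',a')-F(\omega,a)=V_{k}(\omega',a')\bigl[g(\omega',a')-g(\omega,a)\bigr]+\bigl[V_{k}(\omega',a')-V_{k}(\omega,a)\bigr]g(\omega,a).
\]
For the first term I would factor out the weight $e^{(k-1)\sqrt{\omega}|x|+k\varepsilon|x|}$ occurring in \eqref{20/10/29/16:37}: the complementary factor $V_{k}(\omega',a')\,e^{-(k-1)\sqrt{\omega}|x|-k\varepsilon|x|}$ is bounded pointwise and uniformly, since by \eqref{9/7-9:32} its exponent is at most $[(k-p)\sqrt{\omega'}-(k-1)\sqrt{\omega}+\varepsilon p]|x|$, which is negative for $\omega'$ near $\omega$ and $\delta_{0},\varepsilon$ small because $p>1$ forces $k-p<k-1$. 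Hence the first term is $\lesssim\|e^{(k-1)\sqrt{\omega}|x|+k\varepsilon|x|}\{g(\omega',a')-g(\omega,a)\}\|_{L^{2}}\to0$ by \eqref{20/10/29/16:37}. For the second term I would use dominated convergence: $\varphi$ is continuous in $(\omega,a)$ in $H^{2}(\R\times\T)$ (Lemma \ref{main-lem-2}, \eqref{eq2-10}), hence in $L^{\infty}$ by Sobolev embedding, and is strictly positive (Proposition \ref{lem-up-low-b}), so $V_{k}(\omega',a')\to V_{k}(\omega,a)$ at each point; the integrand is dominated by $C e^{-2c|x|}\in L^{1}$, obtained by combining the $V_{k}$-growth with the decay \eqref{20/10/29/14:59} of the fixed function $g(\omega,a)$. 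Thus the second term also tends to $0$ in $L^{2}$.

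To conclude, since $P_{\perp}$ is an orthogonal projection of norm $\le1$, the map $P_{\perp}F$ inherits the $L^{2}$-continuity. I would then split
\begin{equation*}
\begin{split}
&\mathbf{T}(\omega',a')^{-1}P_{\perp}F(\omega',a')-\mathbf{T}(\omega,a)^{-1}P_{\perp}F(\omega,a)\\
&\quad=\bigl[\mathbf{T}(\omega',a')^{-1}-\mathbf{T}(\omega,a)^{-1}\bigr]P_{\perp}F(\omega',a')+\mathbf{T}(\omega,a)^{-1}\bigl[P_{\perp}F(\omega',a')-P_{\perp}F(\omega,a)\bigr].
\end{split}
\end{equation*}
The first summand tends to $0$ in $H^{2}$ by the operator-norm continuity \eqref{c-L-2} of $\mathbf{T}^{-1}$ and the uniform bound on $\|F(\omega',a')\|_{L^{2}}$; the second tends to $0$ in $H^{2}$ by the uniform bound \eqref{28-3} and the $L^{2}$-continuity of $P_{\perp}F$ just proved.

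The main obstacle is the exponential bookkeeping in the first term of the $F$-continuity. The weight $e^{(k-1)\sqrt{\omega}|x|+k\varepsilon|x|}$ in hypothesis \eqref{20/10/29/16:37} is calibrated precisely to dominate the growth of $V_{k}(\omega',a')\sim\varphi(\omega',a')^{p-k}$, and verifying this domination uniformly over a neighborhood requires the exponent comparison $k-p<k-1$ (that is, $p>1$) together with the careful, uniform choice of $\varepsilon(k,j)$ and $\delta_{0}$; everything else is a routine combination of dominated convergence with the already-established continuity and boundedness properties of $\mathbf{T}^{-1}$.
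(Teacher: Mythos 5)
Your proof is correct and follows the same two-step skeleton as the paper's: first the $L^{2}(\R\times\T)$-continuity of the product $V_{k}(\omega,a)g(\omega,a)$, then the operator splitting using the uniform bound \eqref{28-3} and the norm continuity \eqref{c-L-2} of $\mathbf{T}(\omega,a)^{-1}$ (Lemma \ref{c-L}); your final display is exactly the paper's concluding step. The one genuine difference lies in the product-difference decomposition and the treatment of the $V_{k}$-increment. Writing $V_{k}'=V_{k}(\omega+\gamma_{1},a+\gamma_{2})$ and $g'=g(\omega+\gamma_{1},a+\gamma_{2})$, you decompose $V_{k}'g'-V_{k}g=V_{k}'\{g'-g\}+\{V_{k}'-V_{k}\}g$ and handle $\{V_{k}'-V_{k}\}g(\omega,a)$ by dominated convergence, using only the $L^{\infty}$-continuity (Sobolev embedding) and the strict positivity of $\varphi$ from Proposition \ref{lem-up-low-b}; the paper instead decomposes as $\{V_{k}'-V_{k}\}g'+V_{k}\{g'-g\}$ (see \eqref{20/10/29/16:11}) and estimates the $V_{k}$-increment via the fundamental theorem of calculus \eqref{v-k-deriv}, which requires the derivative decay \eqref{9/7-9:33} from Proposition \ref{c1-uni-decay} but produces the quantitative rate $O(|\gamma_{1}|+|\gamma_{2}|)$ of \eqref{eq5-11}. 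Your variant is more elementary there (no differentiation of $V_{k}$ is needed), at the price of losing the rate, which the lemma does not require anyway; conversely, because you attach the shifted factor $V_{k}(\omega',a')$ rather than $V_{k}(\omega,a)$ to the $g$-increment, you need (and correctly supply) the extra comparison $\sqrt{\omega'}\le\sqrt{\omega}+\varepsilon$ before invoking hypothesis \eqref{20/10/29/16:37}. One small calibration point: the explicit threshold $\varepsilon(k,j)=\min\{\varepsilon_{1},\,p\sqrt{\omega_{p}-\delta_{0}}/(k+p+j)\}$ fixed in your first paragraph is tuned only to the uniform $L^{2}$ bound of the product; your later domination of $V_{k}(\omega',a')\,e^{-(k-1)\sqrt{\omega}|x|-k\varepsilon|x|}$ needs roughly $\varepsilon<(p-1)\sqrt{\omega_{p}-\delta_{0}}/(2k)$, which your formula does not imply when $p$ is close to $1$; the definition of $\varepsilon(k,j)$ should therefore be taken as the minimum over all such constraints --- a trivial repair that does not affect the structure of the argument.
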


\begin{proof}[Proof of Lemma \ref{20/10/21/15:57}]
Let $0< \varepsilon <\varepsilon_{1}$,  
  $(\omega, a) \in (\omega_{p}-\delta_{0}, \omega_{p}+\delta_{0}) \times (-a_{\varepsilon}, a_{\varepsilon})$, and let $(\gamma_{1}, \gamma_{2}) \in \mathbb{R}^{2}$. We will assume $\sqrt{\omega}>\dfrac{1}{2}\sqrt{\omega_{p}}$ and 
  $\varepsilon$ being sufficiently small dependently only on $p$, $k$, and $j$, without any notice. Furthermore,  we will take $(\gamma_{1}, \gamma_{2}) \to (0,0)$, so that we may assume that 
\begin{align}
\label{20/10/27/17:5}
&(\omega+\gamma_{1}, a+\gamma_{2}) 
\in 
(\omega_{p}-\delta_{0}, \omega_{p}+\delta_{0}) \times (-a_{\varepsilon}, a_{\varepsilon}), 
\\[6pt]
\label{20/10/29/17:56}
&
\sqrt{\omega}-\varepsilon
\le 
\sqrt{\omega-|\gamma_{1}|} 
\le 
\sqrt{\omega+|\gamma_{1}|} 
\le 
\sqrt{\omega}+\varepsilon
.
\end{align}


First, we shall show that 
\begin{equation}\label{20/10/26/18:15}
\lim_{(\gamma_{1}, \gamma_{2})\to (0,0)}
\big\| 
V_{k}(\omega+\gamma_{1}, a+\gamma_{2})
g(\omega+\gamma_{1}, a+\gamma_{2}) 
-
V_{k}(\omega, a) g(\omega, a) 
\big\|_{L^{2}(\R \times \T)}
=
0
. 
\end{equation} 
Observe that 
\begin{equation}\label{20/10/29/16:11}
\begin{split}
&
\big\| 
V_{k}(\omega+\gamma_{1}, a+\gamma_{2})
g(\omega+\gamma_{1}, a+\gamma_{2}) 
-
V_{k}(\omega, a) g(\omega, a) 
\big\|_{L^{2}(\R \times \T)}
\\[6pt]
&\le 
\| 
\big\{ 
V_{k}(\omega+\gamma_{1},a+\gamma_{2})
-
V_{k}(\omega, a)
\big\}
g(\omega+\gamma_{1}, a+\gamma_{2}) 
\|_{L^{2}(\R \times \T)}
\\[6pt]
&\quad + 
\|V_{k}(\omega, a) 
\big\{
g(\omega+\gamma_{1}, a+\gamma_{2}) 
-
g(\omega, a) 
\big\} 
\|_{L^{2}(\R \times \T)}. 
\end{split} 
\end{equation} 
Consider the first term on the right-hand side of \eqref{20/10/29/16:11}. 
By the fundamental theorem of calculus, \eqref{v-k-deriv}, 
\eqref{20/10/29/14:59}, 
\eqref{9/7-9:32}, \eqref{9/7-9:33}, \eqref{20/10/29/17:56} and  
 $p > 1$, 
 the following holds everywhere in $\R \times \T$: 
\begin{equation}\label{eq5-11}
\begin{split}
&
\big|
\big\{ 
V_{k}(\omega+\gamma_{1},a+\gamma_{2})
-
V_{k}(\omega, a)
\big\}
g(\omega+\gamma_{1}, a+\gamma_{2}) 
\big| 
\\[6pt]
&\lesssim 
\big| 
V_{k}(\omega+\gamma_{1},a+\gamma_{2})
-
V_{k}(\omega, a+\gamma_{2})
\big|
e^{-k 
\sqrt{\omega-|\gamma_{1}|}|x| +j \varepsilon |x|}
\\[6pt]
&\quad +
\big|
V_{k}(\omega, a+\gamma_{2})
-
V_{k}(\omega, a)
\big|
e^{-k 
\sqrt{\omega-|\gamma_{1}|}|x| +j \varepsilon |x|}
\\[6pt]
&\lesssim 
|\gamma_{1}|
\int_{0}^{1} \big| V_{k+1}(\omega+\theta \gamma_{1}, a+\gamma_{2}) \big|
\big| 
\partial_{\omega}\varphi(\omega+\theta \gamma_{1}, a+\gamma_{2})
\big| 
\,d\theta
e^{-k 
\sqrt{\omega-|\gamma_{1}|}|x| +j \varepsilon |x|}
\\[6pt]
&\quad +
|\gamma_{2}|
\int_{0}^{1} 
\big| V_{k+1}(\omega, a+\theta \gamma_{2}) \big| 
\big| 
\partial_{a}\varphi(\omega, a+\theta \gamma_{2})
\big| 
\,d\theta
e^{-k 
\sqrt{\omega-|\gamma_{1}|}|x| +j \varepsilon |x|}
\\[6pt]
&\lesssim
(|\gamma_{1}|+|\gamma_{2}|)
e^{-\frac{p}{2}\sqrt{\omega_{p}}|x| + (3k + j + 5 - 2p)\varepsilon |x|}
\lesssim 
(|\gamma_{1}|+|\gamma_{2}|)
e^{\frac{-p}{4}\sqrt{\omega_{p}}|x|}
, 
\end{split}
\end{equation}
where the implicit constants depend only on $p$, $k$, $j$ and $\varepsilon$. 
Thus, we find that 
\begin{equation}\label{eq5-12}
\lim_{(\gamma_{1}, \gamma_{2})\to (0,0)} 
\| 
\big\{ 
V_{k}(\omega+\gamma_{1},a+\gamma_{2})
-
V_{k}(\omega, a)
\big\}
g(\omega+\gamma_{1}, a+\gamma_{2}) 
\|_{L^{2}(\R \times \T)}
= 0. 
\end{equation}
Move on to the second term on the right-hand side of \eqref{20/10/29/16:11}. 
 By \eqref{9/7-9:32} and $p > 1$, we see  that 
\begin{equation}\label{eq5-13}
\big|
V_{k}(\omega, a) 
e^{-(k-1)\sqrt{\omega}|x| - k\varepsilon |x|}
\big| 
\lesssim  
e^{\frac{-(p-1)}{2}\sqrt{\omega_{p}}|x|}
, 
\end{equation}
where the implicit constant depends only on $p$, $k$, $j$ and $\varepsilon$. 
Then, by \eqref{eq5-13} and \eqref{20/10/29/16:37}, 
we see that 
\begin{equation}\label{eq5-14}
\lim_{(\gamma_{1}, \gamma_{2})\to (0,0)}
\|V_{k}(\omega, a) 
\big\{
g(\omega+\gamma_{1}, a+\gamma_{2}) 
-
g(\omega, a) 
\big\} 
\|_{L^{2}(\R \times \T)}
=0
.
\end{equation}
Putting \eqref{20/10/29/16:11}, \eqref{eq5-12} and \eqref{eq5-14} together, we find that \eqref{20/10/26/18:15} holds. 

We shall finish the proof of the lemma. 

Observe from 
\eqref{9/7-9:32}, 
\eqref{20/10/29/14:59}, 
$p > 1$ and  \eqref{20/10/29/17:56} that 
\begin{equation}\label{eq5-15}
\begin{split}
&
\|
V_{k}(\omega+\gamma_{1}, a+\gamma_{2}) 
g(\omega+\gamma_{1}, a+\gamma_{2}) 
\|_{L^{2}(\R \times \T)}
\\[6pt]
&\lesssim 
\|
e^{\frac{- p}{2} 
\sqrt{\omega_{p}} |x|
+ 
(3k - 2p) \varepsilon |x|}
\|_{L^{2}(\R \times \T)}
\le 
\|
e^{\frac{- p}{4}
\sqrt{\omega_{p}}|x|}
\|_{L^{2}(\R \times \T)}
\lesssim 1, 
\end{split} 
\end{equation}
where the implicit constants depend only on $p$, $k$, $j$ and $\varepsilon$. 
 Then, by \eqref{eq5-15}, Lemma \ref{c-L}, \eqref{28-3} and \eqref{20/10/26/18:15}, we can prove the continuity of  
$\mathbf{T}(\omega, a)^{-1}P_{\perp}
\big\{ 
V_{k}(\omega, a)g(\omega, a) 
\big\}
$ on $(\omega_{p}-\delta_{0}, \omega_{p}+\delta_{0}) \times (-a_{\varepsilon}, a_{\varepsilon})$ in the $H^{2}(\R \times \T)$-topology.
\end{proof}


Next, we give the second derivatives of $\varphi(\omega, a)$:
\begin{proposition}\label{main-lem-1}
Assume $p>1$. Then, there exists $0< \varepsilon_{2}<\varepsilon_{1}$ depending only on $p$ such that if $0< \varepsilon <\varepsilon_{2}$, 
 then $\varphi$ is $C^{2}$ 
on $(\omega_{p} - \delta_{0},~\omega_{p}+ \delta_{0}) 
\times (-a_{\varepsilon}, a_{\varepsilon})$ 
 in the $H^{2}(\R \times \T)$-topology; 
 and the following hold for all $(\omega, a) 
\in (\omega_{p} - \delta_{0},~\omega_{p}+ \delta_{0}) \times 
(-a_{\varepsilon}, a_{\varepsilon})$:
\begin{align}
\label{der2-h-a}
\partial_{a}^{2} \varphi (\omega, a)
&=
\mathbf{T}(\omega, a)^{-1}
P_{\perp}
\big(
V_{2}(\omega, a)
\{\partial_{a} \varphi(\omega, a) \}^{2}
\big)
, 
\\[6pt]
\label{der2-h-w}
\partial_{\omega}^{2} \varphi (\omega, a) 
&=
\mathbf{T}(\omega, a)^{-1}
P_{\perp}
\big( 
V_{2}(\omega, a) \{ \partial_{\omega} \varphi(\omega, a)\}^{2}
-
\partial_{\omega}\varphi(\omega, a)
\big)
,
\end{align}
\begin{equation}\label{der2-h-a-w}
\begin{split}
&\partial_{\omega} \partial_{a} 
\varphi(\omega, a)
=
\partial_{a} \partial_{\omega} 
\varphi(\omega, a)
\\[6pt]
&=
\mathbf{T}(\omega, a)^{-1}
P_{\perp}
\big(
V_{2}(\omega, a) \partial_{\omega}
\varphi(\omega, a) \partial_{a} 
\varphi(\omega, a) 
-
\partial_{a}\varphi(\omega, a)
\big)
.
\end{split}
\end{equation}
\end{proposition}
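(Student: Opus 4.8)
The plan is to obtain the second derivatives by differentiating the first-order representations \eqref{der-a-h} and \eqref{der-w-h} once more, routing everything through Lemma \ref{d-inverse-L} so that expressions of the form $\mathbf{T}(\omega,a)^{-1}f(\omega,a)$ can be differentiated rigorously in the $H^{2}(\R\times\T)$-topology. Since $\varphi(\omega,a)=R_{\omega_{p}}+a\psi_{\omega_{p}}\cos y+\eta(\omega,a)$ and the first two summands are respectively constant and affine in $(\omega,a)$, every second derivative of $\varphi$ coincides with the corresponding second derivative of $\eta$, so it suffices to differentiate $\partial_{a}\eta$ and $\partial_{\omega}\eta$. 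The reason for using Lemma \ref{d-inverse-L} instead of naively differentiating the identity $P_{\perp}\mathcal{F}(\omega,\varphi(\omega,a))=0$ twice is that the latter presupposes the existence of $\partial^{2}\varphi$ in $H^{2}$, which is exactly what is in doubt when $1<p<2$; Lemma \ref{d-inverse-L} instead produces the $H^{2}$-limit of the difference quotient directly.

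For $\partial_{a}^{2}\varphi$ I would write $\partial_{a}\eta=\mathbf{T}(\omega,a)^{-1}f_{a}$ with $f_{a}:=-P_{\perp}\mathbf{L}_{+}(\omega,a)\psi_{\omega_{p}}\cos y$. Applying Lemma \ref{d-L} to the fixed vector $\psi_{\omega_{p}}\cos y$ together with Lemma \ref{p-d} shows that $f_{a}$ has an $a$-derivative in $Y_{2}$, namely $\partial_{a}f_{a}=P_{\perp}\{V_{2}(\omega,a)\,\partial_{a}\varphi\cdot\psi_{\omega_{p}}\cos y\}$. Feeding this into \eqref{identity-3} yields two terms, $\mathbf{T}^{-1}P_{\perp}\{V_{2}\,\partial_{a}\varphi\cdot\psi_{\omega_{p}}\cos y\}$ and $\mathbf{T}^{-1}P_{\perp}\{V_{2}\,\partial_{a}\varphi\cdot\partial_{a}\eta\}$, whose sum collapses to $\mathbf{T}^{-1}P_{\perp}\{V_{2}(\partial_{a}\varphi)^{2}\}$ because $\psi_{\omega_{p}}\cos y+\partial_{a}\eta=\partial_{a}\varphi$; this is \eqref{der2-h-a}. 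The arguments for $\partial_{\omega}^{2}\varphi$ and $\partial_{\omega}\partial_{a}\varphi$ are the same in spirit: one differentiates $\partial_{\omega}\eta=-\mathbf{T}^{-1}(R_{\omega_{p}}+\eta)$ via \eqref{identity-4}, where the additional term $-\mathbf{T}^{-1}\widetilde{f}$ encodes the explicit $\omega$ in $\mathbf{L}_{+}$ and produces the lower-order summands $\partial_{\omega}\varphi$, resp.\ $\partial_{a}\varphi$, of \eqref{der2-h-w} and \eqref{der2-h-a-w}; equality of the mixed partials follows since differentiating in either order lands on the same formula.

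It then remains to upgrade ``the second derivatives exist and are given by these formulas'' to ``$\varphi$ is $C^{2}$'', that is, to show that the right-hand sides of \eqref{der2-h-a}--\eqref{der2-h-a-w} depend continuously on $(\omega,a)$ in $H^{2}$. Each formula splits into a harmless lower-order piece $\mathbf{T}^{-1}P_{\perp}(\partial\varphi)$, continuous by Lemma \ref{c-L}, \eqref{28-3} and Proposition \ref{c1-uni-decay}, plus a principal piece $\mathbf{T}^{-1}P_{\perp}\{V_{2}(\omega,a)\,g(\omega,a)\}$ with $g$ a product of two first-order derivatives. This principal piece is exactly the object treated by Lemma \ref{20/10/21/15:57} with $k=2$: I would verify hypotheses \eqref{20/10/29/14:59} and \eqref{20/10/29/16:37} from the decay bounds of Propositions \ref{lem-up-low-b}--\ref{c1-uni-decay} and the $H^{2}$-continuity of $\partial_{a}\varphi,\partial_{\omega}\varphi$ (coming from $\eta\in C^{1}$), and then invoke the lemma to conclude $H^{2}$-continuity. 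Finally I would fix $\varepsilon_{2}$ small, depending only on $p$, so that all weighted-$L^{2}$ bounds close.

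The main obstacle lies entirely in the regime $1<p<2$, where $V_{2}=p(p-1)\varphi^{p-2}$ is \emph{unbounded} as $|x|\to\infty$ (because $\varphi\to0$ there), so that neither the existence of $\partial^{2}\varphi$ in $H^{2}$ nor its continuity is automatic. The whole strategy is designed to defeat this singular factor: the exponential two-sided bound of Proposition \ref{lem-up-low-b} and the derivative decay of Proposition \ref{c1-uni-decay} force the products $V_{2}(\partial\varphi)^{2}$ to decay like $e^{-p\sqrt{\omega}|x|}$ up to an $\varepsilon$-loss, which is integrable and dominates the relevant difference quotients, allowing Lebesgue dominated convergence and Lemma \ref{20/10/21/15:57} to run. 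For $p\ge2$ the factor $\varphi^{p-2}$ is bounded and continuous, so this step degenerates to the classical argument; the genuine content of the proposition is the uniform treatment all the way down to $p>1$.
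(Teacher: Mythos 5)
Your proposal follows essentially the same route as the paper's own proof: the paper likewise obtains the second derivatives by differentiating the first-order representations \eqref{der-a-h}--\eqref{der-w-h} through Lemma \ref{d-inverse-L} (with Lemmas \ref{p-d} and \ref{d-L} supplying $\partial\big(P_{\perp}\mathbf{L}_{+}(\omega,a)\psi_{\omega_{p}}\cos y\big)$), collapses the two resulting terms via $\psi_{\omega_{p}}\cos y+\partial_{a}\eta=\partial_{a}\varphi$ to reach \eqref{der2-h-a}, invokes the implicit function theorem when $p\ge 2$, and for $1<p<2$ proves $H^{2}$-continuity by applying Lemma \ref{20/10/21/15:57} with $k=2$, $j=4$, $g=\{\partial_{a}\varphi\}^{2}$, exactly as you propose.

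There is, however, one concrete bookkeeping slip in your sketch, and it is worth spelling out because it interacts with the statement itself. When you differentiate $\partial_{\omega}\varphi=-\mathbf{T}(\omega,a)^{-1}\{R_{\omega_{p}}+\eta(\omega,a)\}$ in $\omega$ via \eqref{identity-4}, the lower-order contribution does \emph{not} come only from the term $-\mathbf{T}^{-1}\widetilde{f}$: the datum $f=R_{\omega_{p}}+\eta(\omega,a)$ itself depends on $\omega$, so $\mathbf{T}^{-1}\partial_{\omega}f=\mathbf{T}^{-1}\partial_{\omega}\eta=\mathbf{T}^{-1}\partial_{\omega}\varphi$ contributes a second, equal lower-order term. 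A faithful execution of your plan therefore yields
\begin{equation*}
\partial_{\omega}^{2}\varphi(\omega,a)
=
\mathbf{T}(\omega,a)^{-1}P_{\perp}
\big(
V_{2}(\omega,a)\{\partial_{\omega}\varphi(\omega,a)\}^{2}
-2\,\partial_{\omega}\varphi(\omega,a)
\big),
\end{equation*}
with a factor $2$ that is absent from \eqref{der2-h-w}. This factor is in fact correct: at $a=0$, where $\varphi(\omega,0)=R_{\omega}$, differentiating $\mathcal{F}(\omega,R_{\omega})=0$ twice in $\omega$ gives $\mathbf{L}_{+}(\omega,0)\partial_{\omega}^{2}R_{\omega}=V_{2}(\omega,0)\{\partial_{\omega}R_{\omega}\}^{2}-2\partial_{\omega}R_{\omega}$, whereas applying $\mathbf{T}(\omega,0)$ to \eqref{der2-h-w} would produce the same identity with coefficient $1$ (note $P_{\perp}\partial_{\omega}\varphi=\partial_{\omega}\varphi$, since $\partial_{\omega}\varphi=\partial_{\omega}\eta\in X_{2}$). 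So \eqref{der2-h-w} (and the corresponding equation \eqref{2d-eq-4}) as printed carries a typo; your sketch lands on the printed formula only because dropping the $\mathbf{T}^{-1}\partial_{\omega}f$ term happens to cancel it. The slip is harmless downstream, because in the proof of Theorem \ref{main-prop} the quantity $\partial_{\omega}^{2}\varphi$ enters only multiplied by $\big(\tfrac{d\omega}{da}(0)\big)^{2}=0$, and in Lemma \ref{9/2-13:45} only the decay rate of the right-hand side matters. Your treatment of \eqref{der2-h-a} is correct as written, and for the mixed derivative both orders of differentiation do yield the paper's \eqref{der2-h-a-w} (there the explicit $\omega$-dependence enters exactly once), so aside from restoring the factor $2$ your argument goes through.
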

\begin{remark}\label{rem-2d-phi}
Since $\mathbf{T}(\omega, a)^{-1}$ maps $Y_{2}$ to $X_{2}$, Proposition \ref{main-lem-1} shows that 
\begin{equation*}\label{9/50-15:38}
\partial_{a}^{2} \varphi(\omega, a), ~ 
\partial_{\omega} \partial_{a} \varphi(\omega, a), ~
\partial_{a} \partial_{\omega} \varphi(\omega, a), ~
\partial_{\omega}^{2} \varphi(\omega, a) 
\in X_{2}.
\end{equation*}
\end{remark}

\begin{proof}[Proof of Proposition \ref{main-lem-1}]
We shall prove \eqref{der2-h-a} . 
By Lemmas \ref{p-d} and \ref{d-L}, the following holds 
in $L^{2}(\R \times \T)$: 
\begin{align}
\label{pf-2d-13}
\partial_{a} \big( P_{\perp} \mathbf{L}_{+}(\omega, a) \{ \psi_{\omega_{p}} \cos{y} \} \big)
&=
- 
P_{\perp} \Big( V_{2}(\omega, a) \partial_{a}\varphi(\omega, a) \psi_{\omega_{p}} \cos{y} \Big) 
, 
\\[6pt]
\label{pf-2d-14}
\partial_{\omega} \big( P_{\perp} \mathbf{L}_{+}(\omega, a) \{ \psi_{\omega_{p}} \cos{y} \}
\big)
&=
P_{\perp} \{ \psi_{\omega_{p}} \cos{y} \}
-
P_{\perp} \Big( V_{2}(\omega, a) \partial_{\omega}\varphi(\omega, a) \psi_{\omega_{p}} \cos{y} \Big) 
.
\end{align}
Furthermore, by \eqref{der-a-h}, Lemma \ref{d-inverse-L} and \eqref{pf-2d-13}, 
the following holds in $H^{2}(\R \times \T)$: 
\begin{equation}\label{pf-2d-13-2}
\begin{split}
&\partial_{a}^{2} \varphi(\omega, a)
=
-\partial_{a} 
\Big( 
\mathbf{T}(\omega, a)^{-1} 
P_{\perp} \mathbf{L}_{+}(\omega, a) \{ \psi_{\omega_{p}} \cos{y} \}
\Big)
\\[6pt]
&=
-\mathbf{T}(\omega, a)^{-1} \partial_{a} \Big(
P_{\perp} \mathbf{L}_{+}(\omega, a) \{ \psi_{\omega_{p}} \cos{y} \}
\Big)
\\[6pt]
&\quad 
-\mathbf{T}(\omega, a)^{-1} P_{\perp} \Big( V_{2}(\omega, a) \partial_{a}\varphi(\omega, a)\mathbf{T}(\omega, a)^{-1} 
P_{\perp} \mathbf{L}_{+}(\omega, a) \{ \psi_{\omega_{p}} \cos{y} \}
\Big)
\\[6pt]
&=
\mathbf{T}(\omega, a)^{-1}P_{\perp} \Big( V_{2}(\omega, a) \partial_{a} \varphi(\omega, a) 
\Big\{\psi_{\omega_{p}} \cos{y}
-\mathbf{T}(\omega, a)^{-1} 
P_{\perp} \mathbf{L}_{+}(\omega, a) \{ \psi_{\omega_{p}} \cos{y} \}
\Big\}
\Big)
.
\end{split} 
\end{equation}
Plugging \eqref{der-a-h} into the right-hand side of \eqref{pf-2d-13-2}, we obtain \eqref{der2-h-a}. Similarly, we can prove \eqref{der2-h-a-w} and \eqref{der2-h-w}. 

It remains to prove the continuity of the second derivatives. 
When $2\le p$, 
we see from the implicit function 
theorem that 
$\varphi(\omega, a)$ is 
$C^{2}$ with respect to $a$ 
and $\omega$. 
Hence, we may assume that $2>p$. 
Observe from \eqref{9/7-9:33} that 
\begin{equation}\label{eq5-16}
\begin{split}
\big| 
\{ \partial_{a}\varphi(\omega, a, x, y)\}^{2} 
\big|
\lesssim 
e^{-2\sqrt{\omega}|x|+4\varepsilon |x|}
, 
\end{split} 
\end{equation}
where the implicit constant depends only on $p$ and $\varepsilon$. 
Furthermore, by \eqref{9/7-9:33}, the embedding 
$H^{2}(\R \times \T) \hookrightarrow L^{\infty}(\R \times \T)$, and the continuity of $\partial_{a}\varphi(\omega, a)$ in $H^{2}(\R \times \T)$, 
we see that
\begin{equation}\label{eq5-17}
\begin{split}
&
\| e^{\sqrt{\omega}|x|+2\varepsilon |x|} 
\big\{ ( \partial_{a}\varphi(\omega+\gamma_{1},a+\gamma_{2}))^{2} 
- (\partial_{a}\varphi(\omega, a, x, y))^{2} 
\big\}
\|_{L^{2}(\R \times \T)}
\\[6pt]
&\le 
\| e^{\sqrt{\omega}|x|+2\varepsilon |x|} 
e^{-\frac{3}{2}(\sqrt{\omega-|\gamma_{1}|} + 2\varepsilon)|x| }\|_{L^{2}(\R \times \T)}
\| 
\varphi(\omega+\gamma_{1},a+\gamma_{2}) 
- \partial_{a}\varphi(\omega, a, x, y) 
\|_{L^{\infty}(\R \times \T)}^{\frac{1}{2}}
\\[6pt]
&\to 0
\quad 
\mbox{as $(\gamma_{1}, \gamma_{2}) \to (0,0)$}.
\end{split} 
\end{equation} 
Then, by \eqref{der2-h-a}, \eqref{eq5-16} and \eqref{eq5-17}, 
we find that Lemma \ref{20/10/21/15:57} can apply to $\partial_{a}^{2}\varphi(\omega, a)$ 
as $k=2$, $j=4$, $g(\omega, a)=\{ \partial_{a}\varphi(\omega, a)\}^{2}$. Thus, we see that 
$\partial_{a}^{2}\varphi(\omega, a)$ is continuous with respect to 
$(\omega, a)$ in the 
$H^{2}(\R \times \T)$-topology. 
Similarly, we can prove the continuity 
of the other partial derivatives. 
\end{proof}

We state decay properties of the derivatives of $\varphi(\omega, a)$: 
\begin{lemma}\label{9/2-13:45}
Let $p>1$. Then, there exists $\widetilde{\varepsilon}_{2}>0$ depending only on $p$ such that 
for any $0<\varepsilon <\widetilde{\varepsilon}_{2}$ 
and $(\omega, a) \in (\omega_{p} - \delta_{0}, \omega_{p}+ \delta_{0}) \times (-a_{\varepsilon}, a_{\varepsilon})$, 
the following holds: 
\begin{equation}\label{dec2-1}
|\partial_{a}^{2} \varphi (\omega, a,x, y) |
+
|
\partial_{\omega} \partial_{a} \varphi(\omega, a,x, y)
|
+
| 
\partial_{\omega}^{2} \varphi(\omega, a,x, y) 
|
\lesssim 
e^{-\{ \sqrt{\omega} -3\varepsilon \}|x| }
, 
\end{equation}
where the implicit constants depend only on $p$ and $\varepsilon$. 
\end{lemma}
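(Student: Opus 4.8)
The plan is to convert the $\mathbf{T}(\omega,a)^{-1}$-representations of the second derivatives furnished by Proposition \ref{main-lem-1} into inhomogeneous equations for $\mathbf{L}_{+}(\omega,a)$ with an exponentially decaying right-hand side, and then to invoke Proposition \ref{thm-exd-1}, exactly as was done for the first derivatives in the proof of Proposition \ref{c1-uni-decay}. The decay rates $e^{-k\sqrt{\omega}|x|}$ of the potential $V_{k}$ in \eqref{9/7-9:32} and of the first derivatives in \eqref{9/7-9:33} (sharpened in \eqref{c-exp-decay3} and \eqref{c-exp-decay1}) will do all the quantitative work.

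Consider $w:=\partial_{a}^{2}\varphi(\omega,a)$ first. By \eqref{der2-h-a} we have $w=\mathbf{T}(\omega,a)^{-1}P_{\perp}(V_{2}(\omega,a)\{\partial_{a}\varphi(\omega,a)\}^{2})\in X_{2}$, so that $P_{\perp}\mathbf{L}_{+}(\omega,a)w=P_{\perp}(V_{2}(\omega,a)\{\partial_{a}\varphi(\omega,a)\}^{2})$ by the definition $\mathbf{T}(\omega,a)=P_{\perp}\mathbf{L}_{+}(\omega,a)|_{X_{2}}$. Applying $I-P_{\perp}$ shows that $\mathbf{L}_{+}(\omega,a)w$ differs from $V_{2}(\omega,a)\{\partial_{a}\varphi(\omega,a)\}^{2}$ only by a scalar multiple of the kernel direction, i.e.\ $w$ solves
\begin{equation*}
\mathbf{L}_{+}(\omega,a)w=V_{2}(\omega,a)\{\partial_{a}\varphi(\omega,a)\}^{2}+\kappa\,\psi_{\omega_{p}}\cos y,
\qquad
\kappa=\big\langle \mathbf{L}_{+}(\omega,a)w-V_{2}(\omega,a)\{\partial_{a}\varphi(\omega,a)\}^{2},\,\psi_{\omega_{p}}\cos y\big\rangle.
\end{equation*}
Here $\kappa$ is bounded uniformly in $(\omega,a)$: by \eqref{28-3} and the uniform $L^{2}$-bound on the source one has $\|w\|_{H^{2}(\R\times\T)}\lesssim 1$, hence $\|w\|_{L^{\infty}(\R\times\T)}\lesssim 1$ by Sobolev embedding, and the boundedness of $V_{1}(\omega,a)=\varphi(\omega,a)^{p-1}$ then controls $\|\mathbf{L}_{+}(\omega,a)w\|_{L^{2}}$. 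Using \eqref{9/7-9:32} with $k=2$, \eqref{9/7-9:33} and \eqref{eq1-11}, the right-hand side is bounded by $e^{-p\sqrt{\omega}|x|+(6+p)\varepsilon|x|}+|\kappa|e^{-\frac{p+1}{2}\sqrt{\omega}|x|}$; choosing $\widetilde{\varepsilon}_{2}$ small (depending only on $p$) so that $(p-1)\sqrt{\omega}>(5+p)\varepsilon$, and noting $\frac{p+1}{2}>1$, this is $\lesssim e^{-(\sqrt{\omega}-\varepsilon)|x|}$. Thus Proposition \ref{thm-exd-1} applies with $\alpha=\sqrt{\omega}-\varepsilon$ and $B\lesssim 1$, yielding $|w|\lesssim e^{-(\sqrt{\omega}-2\varepsilon)|x|}\le e^{-(\sqrt{\omega}-3\varepsilon)|x|}$.

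The mixed and pure-$\omega$ second derivatives are handled identically, starting from \eqref{der2-h-a-w} and \eqref{der2-h-w}. The only new feature is the extra source term, $-\partial_{a}\varphi(\omega,a)$ or $-\partial_{\omega}\varphi(\omega,a)$, which by \eqref{c-exp-decay1} and \eqref{c-exp-decay3} decays like $e^{-(\sqrt{\omega}-\varepsilon)|x|}$ and $e^{-(\sqrt{\omega}-2\varepsilon)|x|}$ respectively; the corresponding $V_{2}$-products and the $\psi_{\omega_{p}}\cos y$-corrections again decay strictly faster. The slowest source therefore occurs for $\partial_{\omega}^{2}\varphi(\omega,a)$, where it is $\lesssim e^{-(\sqrt{\omega}-2\varepsilon)|x|}$, so Proposition \ref{thm-exd-1} with $\alpha=\sqrt{\omega}-2\varepsilon$ gives precisely the claimed bound $e^{-(\sqrt{\omega}-3\varepsilon)|x|}$; this is the origin of the $3\varepsilon$ loss in \eqref{dec2-1}.

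I expect the main obstacle to be purely quantitative rather than structural: keeping every constant uniform in $(\omega,a)$ and, above all, verifying that the $V_{2}$-products genuinely decay faster than $e^{-\sqrt{\omega}|x|}$. The available margin is $(p-1)\sqrt{\omega}$, which degenerates as $p\to 1$, so $\widetilde{\varepsilon}_{2}$ (and $\delta_{0}$) must be chosen small depending on $p$; this is exactly where the $p$-dependence of $\widetilde{\varepsilon}_{2}$ and the restriction $0<\varepsilon<\widetilde{\varepsilon}_{2}$ enter, and it is what allows the argument to cover the full range $p>1$ including $1<p<2$.
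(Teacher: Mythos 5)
Your proposal is correct and takes essentially the same route as the paper: it rewrites the $\mathbf{T}(\omega,a)^{-1}$-representations from Proposition \ref{main-lem-1} as equations $\mathbf{L}_{+}(\omega,a)v=G$ with an exponentially decaying right-hand side and then applies Proposition \ref{thm-exd-1}. You are in fact slightly more careful than the paper, whose equations \eqref{2d-eq-1}--\eqref{2d-eq-4} silently drop the kernel-direction correction $\kappa\,\psi_{\omega_{p}}\cos y$ that you track and bound explicitly (it is harmless, as $\psi_{\omega_{p}}$ decays faster than $e^{-\sqrt{\omega}|x|}$), and you also verify the uniform $L^{\infty}$ bound required by Proposition \ref{thm-exd-1}.
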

\begin{proof}
We may write \eqref{der2-h-a} through \eqref{der2-h-w} in Proposition \ref{main-lem-1} as follows: 
\begin{align}
\label{2d-eq-1}
\mathbf{L}_{+}(\omega, a) \partial_{a}^{2} \varphi(\omega, a)
&=
V_{2}(\omega, a)
\{\partial_{a} \varphi(\omega, a) \}^{2}, 
\\[6pt]
\label{2d-eq-2}
\mathbf{L}_{+}(\omega, a) \partial_{\omega}\partial_{a} \varphi(\omega, a)
&=
V_{2}(\omega, a) \partial_{\omega} \varphi(\omega, a) \partial_{a} \varphi(\omega, a) - \partial_{a} \varphi(\omega, a) , 
\\[6pt]
\label{2d-eq-4}
\mathbf{L}_{+}(\omega, a) \partial_{\omega}^{2} \varphi(\omega, a)
&=
V_{2}(\omega, a)
\{\partial_{\omega} \varphi(\omega, a) \}^{2}
-
\partial_{\omega} \varphi(\omega, a)
. 
\end{align}
Observe from \eqref{9/7-9:32} and \eqref{9/7-9:33} that 
if $\varepsilon$ is sufficiently small depending only on $p$, then 
\begin{align*}
|
V_{2}(\omega, a)
\{\partial_{a} \varphi(\omega, a) \}^{2}
|
&\lesssim
e^{- \{ \sqrt{\omega}- 2\varepsilon\} |x|}, 
\\[6pt]
|
V_{2}(\omega, a)
\partial_{\omega}\varphi(\omega, a)\partial_{a} \varphi(\omega, a) 
|
+
|\partial_{a}\varphi(\omega, a)|
&
\lesssim 
e^{- \{ \sqrt{\omega}- 2\varepsilon\} |x|}, 
\\[6pt]
|
V_{2}(\omega, a)
\{\partial_{\omega} \varphi(\omega, a) \}^{2}
|
+
|\partial_{\omega} \varphi(\omega, a)|
&\lesssim 
e^{- \{ \sqrt{\omega}- 2\varepsilon\} |x|}, 
\end{align*}
where the implicit constants depend only on $p$ and $\varepsilon$. 
Then, applying Proposition \ref{thm-exd-1} to \eqref{2d-eq-1} through \eqref{2d-eq-4} as $\alpha=\sqrt{\omega} -2\varepsilon$, we obtain \eqref{dec2-1}. 
\end{proof}


\subsection{Third derivatives of $\boldsymbol{\varphi(\omega, a)}$}
\label{sec-d3-phi}

In this section, we find the third derivatives of $\varphi(\omega,a)$ with respect to $\omega$ and $a$. 


Let us begin with a generalization of Lemma \ref{lem-d-v1}:

\begin{lemma}\label{lem-phi-v2}
Assume $p>1$, and 
let $k, j$ be integers 
satisfying $k > p$ and $j \ge 1$.
Then, there exists $\varepsilon(k,j)>0$ depending only on $p$, $k$, and $j$ with the following property: 
Let $0<\varepsilon < \varepsilon(k,j)$, 
and let $g \in C^{1}((\omega_{p}-\delta_{0}, \omega_{p}+\delta_{0}) \times (-a_{\varepsilon}, a_{\varepsilon}), L^{\infty}(\R \times \T) )$.
Furthermore, assume that the following hold for all $(\omega, a)\in (\omega_{p}-\delta_{0}, \omega_{p}+\delta_{0})\times (-a_{\varepsilon}, a_{\varepsilon})$
and $(x, y)\in \R \times \T$:
\begin{align}
\label{eq5-18}
|g(\omega, a, x, y)|
&\lesssim 
e^{- k \sqrt{\omega}|x|+ j \varepsilon |x|}
, 
\\[6pt]
\label{eq5-19}
|\partial_{\omega} g(\omega, a, x, y)|
+
|\partial_{a} g(\omega, a, x, y)|
&\lesssim 
e^{- k \sqrt{\omega}|x|+ (j+2) \varepsilon |x|
}
, 
\end{align}
where the implicit constant depends only on $p$, 
$j$ and $\varepsilon$. Then, the following holds in the $L^{2}(\R \times \T)$-topology for all $(\omega, a) \in (\omega_{p}-\delta_{0}, \omega_{p}+\delta_{0})\times (-a_{\varepsilon}, a_{\varepsilon})$
: 
\begin{equation*}\label{20/11/9/11:11}
\partial 
\big\{ 
V_{k}(\omega, a) 
g(\omega, a) 
\big\}
=
V_{k+1}(\omega, a) 
\partial \varphi(\omega, a)
g(\omega, a)
+
V_{k}(\omega, a) 
\partial g(\omega, a), 
\end{equation*} 
where $\partial$ denotes 
either $\partial_{a}$ or $\partial_{\omega}$.
\end{lemma}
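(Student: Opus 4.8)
The plan is to mimic the proof of Lemma \ref{lem-d-v1}, upgrading a pointwise product rule to differentiability in the $L^{2}(\R \times \T)$-topology by a dominated-convergence argument. Since $\varphi(\omega,a)$ is positive and $C^{1}$ in the $H^{2}(\R \times \T)$-topology, the embedding $H^{2}(\R \times \T)\hookrightarrow L^{\infty}(\R \times \T)$ shows that for each fixed $(x,y)$ the map $a\mapsto \varphi(\omega,a,x,y)$ is $C^{1}$ with derivative $\partial_{a}\varphi(\omega,a,x,y)$. Combined with \eqref{v-k-deriv} and the hypothesis $g\in C^{1}$ into $L^{\infty}(\R \times \T)$, the ordinary product and chain rules give, everywhere in $\R \times \T$,
\begin{equation*}
\partial_{a}\big\{V_{k}(\omega,a)g(\omega,a)\big\}
=V_{k+1}(\omega,a)\partial_{a}\varphi(\omega,a)g(\omega,a)
+V_{k}(\omega,a)\partial_{a}g(\omega,a).
\end{equation*}
The whole issue is therefore to promote this pointwise identity to convergence of the difference quotient in $L^{2}$.

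Next I would write the difference quotient and split it as
\begin{equation*}
\frac{V_{k}(\omega,a+\delta)g(\omega,a+\delta)-V_{k}(\omega,a)g(\omega,a)}{\delta}
=\frac{V_{k}(\omega,a+\delta)-V_{k}(\omega,a)}{\delta}\,g(\omega,a+\delta)
+V_{k}(\omega,a)\,\frac{g(\omega,a+\delta)-g(\omega,a)}{\delta},
\end{equation*}
and represent each inner difference quotient through the fundamental theorem of calculus, so that the first factor of the first term equals $\int_{0}^{1}V_{k+1}(\omega,a+\theta\delta)\partial_{a}\varphi(\omega,a+\theta\delta)\,d\theta$ (using \eqref{v-k-deriv}) and the second difference quotient equals $\int_{0}^{1}\partial_{a}g(\omega,a+\theta\delta)\,d\theta$.

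The core of the argument is a $\delta$-independent exponential bound on these integrands, obtained from \eqref{9/7-9:32}, \eqref{9/7-9:33}, \eqref{eq5-18} and \eqref{eq5-19}. For the first term, $|V_{k+1}|\lesssim e^{(k+1-p)\sqrt{\omega}|x|+(k+1+p)\varepsilon|x|}$, $|\partial_{a}\varphi|\lesssim e^{-\sqrt{\omega}|x|+2\varepsilon|x|}$ and $|g|\lesssim e^{-k\sqrt{\omega}|x|+j\varepsilon|x|}$ multiply to a function bounded by $e^{-p\sqrt{\omega}|x|+c\varepsilon|x|}$ with $c=c(p,k,j)$; for the second term, $|V_{k}|\lesssim e^{(k-p)\sqrt{\omega}|x|+(k+p)\varepsilon|x|}$ and $|\partial_{a}g|\lesssim e^{-k\sqrt{\omega}|x|+(j+2)\varepsilon|x|}$ again yield a bound $e^{-p\sqrt{\omega}|x|+c'\varepsilon|x|}$. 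Choosing $\varepsilon<\varepsilon(k,j)$ small enough in terms of $p,k,j$, and using $\sqrt{\omega}>\tfrac12\sqrt{\omega_{p}}$, forces the net rate to be negative, say the integrands are dominated by $e^{-\frac{p}{4}\sqrt{\omega_{p}}|x|}$ uniformly over the parameter range and over $\theta\in[0,1]$; this single dominating function lies in $L^{2}(\R \times \T)$. Since for each fixed $(x,y)$ both difference quotients converge pointwise to the corresponding terms of the product rule (pointwise $C^{1}$-regularity of $\varphi$, together with pointwise differentiability of $g$ and its $L^{\infty}$-continuity), the Dominated Convergence Theorem in $L^{2}(\R \times \T)$ yields convergence of the full difference quotient to the claimed derivative in $L^{2}$. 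The case $\partial=\partial_{\omega}$ is identical, the only extra bookkeeping being that the shifted base point $\omega+\theta\gamma_{1}$ changes $\sqrt{\omega}$ in the exponents, which is absorbed exactly as in \eqref{20/10/29/17:56}.

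I expect the main obstacle to be the bookkeeping of exponents in the regime $1<p<2$ with $k>p$: there $p-k<0$, so the factors $V_{k}$ and $V_{k+1}$ grow exponentially in $|x|$, and one must verify that the decay of $g$, $\partial g$ and $\partial\varphi$ strictly dominates this growth after the $\theta$-integration, uniformly in $(\omega,a)$ and $\theta$. This is precisely what fixes the admissible threshold $\varepsilon(k,j)$ and is the delicate point that distinguishes the subquadratic case from the straightforward $p\ge 2$ situation.
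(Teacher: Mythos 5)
Your proposal is correct and takes essentially the same route as the paper: the paper omits the proof of this lemma, stating that it is proven in the same way as Lemma~\ref{lem-d-v1}, namely by the pointwise chain/product rule, the fundamental theorem of calculus to obtain a $\delta$-uniform exponential envelope from \eqref{9/7-9:32}, \eqref{9/7-9:33}, \eqref{eq5-18}, \eqref{eq5-19}, and then dominated convergence --- which is exactly your argument, including the necessary product-rule splitting of the difference quotient that the generalization from $V_{1}$ to $V_{k}g$ requires. The only cosmetic difference is that you dominate by a single $L^{2}$ envelope and apply dominated convergence directly in $L^{2}(\R\times\T)$, whereas the paper's model proof gets $L^{1}$ convergence by dominated convergence plus a uniform $L^{\infty}$ bound and combines the two to conclude in $L^{2}$; the two devices are equivalent here.
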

We omit the proof of Lemma \ref{lem-phi-v2} as the lemma can be proven in a way similar to Lemma \ref{lem-d-v1}.

Now, we give the third derivatives of $\varphi(\omega,a)$: 
\begin{proposition}
\label{main-lem-3}
Assume $p>1$ and let $\varepsilon_{2}$ be the constant give in Proposition \ref{main-lem-1}. 
 Then, there exists $0< \varepsilon_{3}<\varepsilon_{2}$ depending only on $p$ such that
if $0<\varepsilon < \varepsilon_{3}$, then 
$\varphi$ is of class $C^{3}$ on $(\omega_{p} - \delta_{0}, \omega_{p}+ \delta_{0}) \times (-a_{\varepsilon}, a_{\varepsilon})$ in the $H^{2}(\R \times \T)$-topology; 
and the following hold for all $(\omega, a) \in (\omega_{p} - \delta_{0}, \omega_{p}+ \delta_{0}) \times (-a_{\varepsilon}, a_{\varepsilon})$:

\begin{equation}\label{eq5-29}
\begin{split} 
\partial_{a}^{3} \varphi (\omega, a)
&=
\mathbf{T}(\omega, a)^{-1}
P_{\perp}
\Big(
V_{3}(\omega, a)
\{
\partial_{a}\varphi(\omega, a)
\}^{3}
\Big) 
\\[6pt]
&\quad + 3
\mathbf{T}(\omega, a)^{-1} 
P_{\perp} 
\Big(
V_{2}(\omega, a)
\partial_{a}\varphi(\omega, a)
\partial_{a}^{2} \varphi(\omega, a)
\Big), 
\end{split}
\end{equation} 
\begin{equation}\label{eq5-30}
\begin{split} 
&
\partial_{a}^{2} \partial_{\omega} \varphi(\omega, a)
=
\partial_{a} \partial_{\omega} \partial_{a} \varphi 
=
\partial_{\omega} \partial_{a}^{2} \varphi
\\[6pt]
&=
\mathbf{T}(\omega, a)^{-1}
P_{\perp}
\Big(
V_{3}(\omega, a) 
\partial_{\omega}\varphi(\omega, a)
\{
\partial_{a}\varphi(\omega, a)
\}^{2}
\Big) 
\\[6pt] 
&\quad +2
\mathbf{T}(\omega, a)^{-1}
P_{\perp}
\Big(
V_{2}(\omega, a) 
\partial_{a}\varphi(\omega, a) 
\partial_{a} \partial_{\omega} \varphi
(\omega, a)
\Big) 
\\[6pt] 
&\quad + 
\mathbf{T}(\omega, a)^{-1}
P_{\perp}
\Big( 
V_{2}(\omega, a) 
\partial_{a}^{2}\varphi(\omega, a)
\partial_{\omega}\varphi(\omega, a)
\Big) 
\\[6pt]
&\quad -\mathbf{T}(\omega, a)^{-1}
P_{\perp} 
\partial_{a}^{2}\varphi(\omega, a)
, 
\end{split} 
\end{equation}

\begin{equation}\label{eq5-33}
\begin{split} 
&
\partial_{a} \partial_{\omega}^{2} \varphi(\omega, a)
=
\partial_{\omega} \partial_{a}\partial_{\omega}\varphi 
=
\partial_{\omega}^{2} \partial_{a} \varphi
\\[6pt]
&=
\mathbf{T}(\omega, a)^{-1}
P_{\perp}
\Big(
V_{3}(\omega, a) 
\partial_{\omega}\varphi(\omega, a)
\{
\partial_{a}\varphi(\omega, a)
\}^{2}
\Big) 
\\[6pt]
&\quad +2
\mathbf{T}(\omega, a)^{-1}
P_{\perp}
\Big(
V_{2}(\omega, a) 
\partial_{a}\varphi(\omega, a) 
\partial_{a} \partial_{\omega} \varphi
(\omega, a)
\Big) 
\\[6pt]
&\quad + 
\mathbf{T}(\omega, a)^{-1}
P_{\perp}
\Big( 
V_{2}(\omega, a) 
\partial_{a}^{2}\varphi(\omega, a)
\partial_{\omega}\varphi(\omega, a)
\Big) 
\\[6pt]
& \quad 
-\mathbf{T}(\omega, a)^{-1}
P_{\perp} 
\partial_{a}^{2}\varphi(\omega, a)
, 
\end{split}
\end{equation}

\begin{equation}\label{eq5-32}
\begin{split} 
\partial_{\omega}^{3} \varphi (\omega, a)
&= 
\mathbf{T}(\omega, a)^{-1}
P_{\perp}
\Big(
V_{3}(\omega, a) 
\{
\partial_{\omega}\varphi(\omega, a)
\}^{3}
\Big) 
\\[6pt]
& \quad + 
3 \mathbf{T}(\omega, a)^{-1}
P_{\perp}
\Big( 
V_{2}(\omega, a) 
\partial_{\omega}^{2}\varphi(\omega, a)
\partial_{\omega}\varphi(\omega, a)
\Big) 
\\[6pt]
& \quad -w\mathbf{T}(\omega, a)^{-1}
P_{\perp} 
\partial_{\omega}^{2}\varphi(\omega, a)
.
\end{split}
\end{equation} 
\end{proposition}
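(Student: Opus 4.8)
The plan is to follow the proof of Proposition~\ref{main-lem-1} verbatim, one order higher: I would take each of the three second-derivative identities \eqref{der2-h-a}--\eqref{der2-h-a-w} and differentiate it once more in $a$ or $\omega$. Every such expression has the form $\mathbf{T}(\omega,a)^{-1}P_{\perp}\big(V_{k}(\omega,a)\,g(\omega,a)\big)$ plus lower-order pieces, so the three tools already in hand suffice to differentiate it: Lemma~\ref{d-inverse-L} handles the differentiation of the factor $\mathbf{T}(\omega,a)^{-1}$, the generalized product rule of Lemma~\ref{lem-phi-v2} handles $V_{k}(\omega,a)\,g(\omega,a)$, and Lemma~\ref{p-d} lets me commute $\partial_{a},\partial_{\omega}$ past $P_{\perp}$. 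Before invoking Lemma~\ref{lem-phi-v2} I must check its decay hypotheses \eqref{eq5-18}--\eqref{eq5-19} for the relevant $g$'s, namely $\{\partial_{a}\varphi\}^{2}$, $\partial_{\omega}\varphi\,\partial_{a}\varphi$, $\{\partial_{\omega}\varphi\}^{2}$ and their first derivatives; these follow from the exponential bounds of Proposition~\ref{c1-uni-decay} and Lemma~\ref{9/2-13:45}.

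As a model computation, to obtain \eqref{eq5-29} I set $\widetilde{f}=\partial_{a}^{2}\varphi=\mathbf{T}(\omega,a)^{-1}f$ with $f=P_{\perp}\big(V_{2}\{\partial_{a}\varphi\}^{2}\big)$, as in \eqref{der2-h-a}. Lemma~\ref{d-inverse-L} gives $\partial_{a}\widetilde{f}=\mathbf{T}(\omega,a)^{-1}\partial_{a}f+\mathbf{T}(\omega,a)^{-1}P_{\perp}\{V_{2}\,\partial_{a}\varphi\,\widetilde{f}\}$, while Lemmas~\ref{p-d} and~\ref{lem-phi-v2} yield $\partial_{a}f=P_{\perp}\big(V_{3}\{\partial_{a}\varphi\}^{3}+2V_{2}\,\partial_{a}\varphi\,\partial_{a}^{2}\varphi\big)$; the two $V_{2}$-contributions add up with coefficient $3$, which is exactly \eqref{eq5-29}. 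The remaining identities \eqref{eq5-30}, \eqref{eq5-33} and \eqref{eq5-32} come out the same way; the purely linear terms $-\mathbf{T}(\omega,a)^{-1}P_{\perp}\partial^{2}\varphi$ in them arise partly from differentiating the explicit $-\partial_{a}\varphi$ and $-\partial_{\omega}\varphi$ already sitting inside \eqref{der2-h-w} and \eqref{der2-h-a-w}, and partly from the $-\mathbf{T}(\omega,a)^{-1}\widetilde{f}$ term produced by \eqref{identity-4} whenever the differentiation is in $\omega$. The equality of the mixed third partials is then immediate from these explicit symmetric expressions (equivalently, from Clairaut's theorem once $C^{3}$ regularity is in hand).

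For the $C^{3}$ claim I would again split on $p$. When $p\ge 3$ the map $x\mapsto x^{p}$ is classically $C^{3}$ and the implicit function theorem gives the regularity directly, so I may assume $1<p<3$. Each term of the four formulas is either a lower-order piece $\mathbf{T}(\omega,a)^{-1}P_{\perp}\partial^{2}\varphi$, continuous because the second derivatives are continuous in $H^{2}$ (Proposition~\ref{main-lem-1}) and $\mathbf{T}(\omega,a)^{-1}$ is continuous in $(\omega,a)$ (Lemma~\ref{c-L}), or a term $\mathbf{T}(\omega,a)^{-1}P_{\perp}(V_{k}g)$ with $k\in\{2,3\}$. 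For the cubic terms $k=3$, and since $p<3$ I can apply Lemma~\ref{20/10/21/15:57} with $k=3$, feeding in the decay of $\{\partial_{a}\varphi\}^{3}$ from \eqref{9/7-9:33} and a weighted-$L^{2}$ continuity estimate modeled on \eqref{eq5-17}. For the quadratic terms $k=2$: if $p<2$ then $k=2>p$ and Lemma~\ref{20/10/21/15:57} applies again; if $2\le p<3$ the coefficient $V_{2}=p(p-1)\varphi^{p-2}$ is bounded and $L^{\infty}$-continuous in $(\omega,a)$, so these terms are continuous by the elementary continuity of a product of an $L^{\infty}$-continuous and an $L^{2}$-continuous factor.

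The main obstacle is precisely this final regularity step in the window $1<p<3$, where $x\mapsto x^{p}$ fails to be three times continuously differentiable and the coefficient $V_{3}=p(p-1)(p-2)\varphi^{p-3}$ (and $V_{2}$ when $p<2$) grows as $|x|\to\infty$. All the work lies in showing that the products $V_{k}g$ nonetheless converge in $L^{2}$ and satisfy the weighted continuity hypothesis \eqref{20/10/29/16:37} of Lemma~\ref{20/10/21/15:57}; this is exactly what the exponential decay estimates of Proposition~\ref{c1-uni-decay} and Lemma~\ref{9/2-13:45}, combined with the gain $k>p$, are built to provide.
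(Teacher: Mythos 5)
Your proposal is correct and follows essentially the same route as the paper's proof: the four identities are obtained by differentiating the second-derivative formulas of Proposition \ref{main-lem-1} once more via Lemma \ref{d-inverse-L} combined with Lemmas \ref{p-d} and \ref{lem-phi-v2} (with the coefficient $3$ and the linear $-\mathbf{T}(\omega,a)^{-1}P_{\perp}\partial^{2}\varphi$ terms arising exactly as you describe), and continuity is reduced, after the split $p\ge 3$ (implicit function theorem) versus $p<3$, to Lemma \ref{20/10/21/15:57} applied to the cubic ($k=3$) and quadratic ($k=2$) terms. Your separate treatment of the quadratic terms when $2\le p<3$, where $V_{2}$ is bounded and an elementary product-continuity argument replaces Lemma \ref{20/10/21/15:57} (whose hypothesis $k>p$ would otherwise fail), is in fact slightly more careful than the paper, which applies that lemma with $k=2$ without comment.
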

\begin{remark}\label{rem-4}
Since $\mathbf{T}(\omega, a)^{-1}$ maps $Y_{2}$ to $X_{2}$, Proposition \ref{main-lem-3} shows that 
\begin{equation*}\label{20/10/31/16:8}
\partial_{a}^{3} \varphi(\omega, a), ~ 
\partial_{a}^{2} \partial_{\omega} \varphi(\omega, a), ~
\partial_{a} \partial_{\omega}^{2} \varphi(\omega, a), ~
\partial_{\omega}^{3} \varphi(\omega, a) 
\in X_{2}.
\end{equation*}
\end{remark}
\begin{proof}[Proof of Proposition 
\ref{main-lem-3}]
We shall prove \eqref{eq5-29}. By \eqref{der2-h-a} in Proposition \ref{main-lem-1}, 
 Lemma \ref{d-inverse-L} with $f=P_{\perp}(V_{2}(\omega, a)\{\partial_{a} \varphi(\omega, a) \}^{2})$, 
 Lemma \ref{p-d} and Lemma \ref{lem-phi-v2} with $k=2$, $j=4$, $g=\{\partial_{a}\varphi\}^{2}$, 
we see that the following holds in $H^{2}(\R \times \T)$: 
\begin{equation*}\label{9/1-16:39}
\begin{split}
&\partial_{a}^{3}\varphi(\omega, a)
=
\partial_{a}
\partial_{a}^{2}\varphi(\omega, a)
=
\partial_{a}
\big\{ 
\mathbf{T}(\omega, a)^{-1}
P_{\perp}
\big(
V_{2}(\omega, a)
\{\partial_{a} \varphi(\omega, a) \}^{2}
\big)
\big\}
\\[6pt]
&=
\mathbf{T}(\omega, a)^{-1}
P_{\perp}
\partial_{a} 
\big(
V_{2}(\omega, a)
\{\partial_{a} \varphi(\omega, a) \}^{2}
\big)
\\[6pt]
& \quad +
\mathbf{T}(\omega, a)^{-1}
P_{\perp} 
\big\{
V_{2}(\omega, a) \partial_{a} \varphi(\omega, a) 
\partial_{a}^{2}\varphi(\omega, a)
\big\}
\\[6pt]
&=
\mathbf{T}(\omega, a)^{-1}
P_{\perp}
\big(
V_{3}(\omega, a) \{ \partial_{a} \varphi(\omega, a)\}^{3}
+
2V_{2}(\omega, a) \partial_{a}\varphi(\omega, a)
\partial_{a}^{2}\varphi(\omega, a)
\big)
\\[6pt]
& \quad +
\mathbf{T}(\omega, a)^{-1}
P_{\perp} 
\big\{
V_{2}(\omega, a) \partial_{a} \varphi(\omega, a) 
\partial_{a}^{2} \varphi(\omega, a)
\big\}
.
\end{split} 
\end{equation*}
Thus, we have proved \eqref{eq5-29}. 
Similarly, we can prove \eqref{eq5-30} through \eqref{eq5-32}. 

It remains to prove the continuity of the third derivatives of $\varphi$. 
When $3 \le p$, 
we see from the implicit function 
theorem that 
$\varphi(\omega, a)$ is 
$C^{3}$ with respect to $a$ 
and $\omega$. 
Hence, we may assume that 
$p < 3$. Then, we shall prove the continuity of $\partial_{a}^{3}\varphi$. 
Observe from the continuity of $\partial_{a}\varphi$ and \eqref{9/7-9:33} 
that Lemma \ref{20/10/21/15:57} 
can apply to the first term on the right-hand side of \eqref{eq5-29} 
as $k=3$, $j=6$, $g(\omega, a)=\{ \partial_{a}\varphi(\omega, a)\}^{3}$. 
Moreover, observe from the continuity of $\partial_{a}\varphi$ and $\partial_{a}^{2}\varphi$, \eqref{9/7-9:33} and Lemma \ref{9/2-13:45} that 
we can apply Lemma \ref{20/10/21/15:57} 
to the second term on the right-hand side of \eqref{eq5-29} as $k=2$, $j=5$, $g=\partial_{a}\varphi(\omega, a) 
\partial_{a}^{2}\varphi(\omega, a)$. Thus, we find that $\partial^{3}\varphi(\omega, a)$ is continuous with respect to $(\omega, a)$ in the $H^{2}(\R \times \T)$-topology. Similarly, we can prove the continuity of the other third order derivatives. 
\end{proof}


\subsection{Derivatives of $\boldsymbol{\mathcal{F}_{\parallel}(\omega, a)}$}
\label{derivative-F-parallel}

In this section, we compute the derivatives of $\mathcal{F}_{\parallel}(\omega, a)$ 
up to the third order.

Observe that 
\begin{equation}\label{eq5-34}
\langle \varphi(\omega, a), \psi_{\omega_{p}}\cos{y} \rangle=a
.
\end{equation}
Then, the following result follows from Lemma \ref{p-d}, 
 \eqref{der-a-h}, \eqref{der-w-h} and \eqref{eq5-34}: 
\begin{proposition}\label{cor1-F-par}
Assume $p>1$. Then, $\mathcal{F}_{\parallel}$ is $C^{1}$ on $(\omega_{p} - \delta_{0}, \omega_{p}+ \delta_{0}) \times (-a_{0}, a_{0})$, and 
the following hold for all $(\omega, a) \in (\omega_{p} - \delta_{0}, \omega_{p}+ \delta_{0}) \times (-a_{0}, a_{0})$: 
\begin{align}
\label{eq5-35}
\partial_{a}\mathcal{F}_{\parallel}(\omega, a)
&=
\langle \mathbf{L}_{+}(\omega, a)\partial_{a}\varphi(\omega, a)
, 
\psi_{\omega_{p}} \cos{y}
\rangle
, 
\\[6pt]
\label{eq5-36}
\partial_{\omega}\mathcal{F}_{\parallel}(\omega, a)
&=
\langle \mathbf{L}_{+}(\omega, a)\partial_{\omega}\varphi(\omega, a), 
\psi_{\omega_{p}} \cos{y}
\rangle 
+ a. 
\end{align}
\end{proposition}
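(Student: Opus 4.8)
The plan is to differentiate the composite map $(\omega,a)\mapsto \mathcal{F}(\omega,\varphi(\omega,a))$ in the $L^{2}(\R\times\T)$-sense and then pass the derivative through the inner product defining $\mathcal{F}_{\parallel}$ via Lemma \ref{p-d}. Since $\varphi(\omega,a)$ is positive everywhere by Proposition \ref{lem-up-low-b}, the absolute value in the nonlinearity disappears and we may write
\begin{equation*}
\mathcal{F}(\omega,\varphi(\omega,a))
=
(-\partial_{x}^{2}-\partial_{y}^{2}+\omega)\varphi(\omega,a)-V_{0}(\omega,a),
\qquad V_{0}(\omega,a):=\varphi(\omega,a)^{p}.
\end{equation*}
This splits the computation into a linear part, to which Lemma \ref{d-lap} applies, and the nonlinear part $V_{0}$, whose $L^{2}$-derivatives are precisely the content of \eqref{d-phi-p-a} and \eqref{d-phi-p-w}.

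Concretely, I would first record that $\varphi$ is $C^{1}$ into $H^{2}(\R\times\T)$ with $\partial_{a}\varphi,\partial_{\omega}\varphi\in X_{2}\subset H^{2}$ given explicitly by \eqref{der-a-h} and \eqref{der-w-h}, so Lemma \ref{d-lap} gives, in $L^{2}(\R\times\T)$, that $\partial_{a}\{(-\partial_{x}^{2}-\partial_{y}^{2}+\omega)\varphi\}=(-\partial_{x}^{2}-\partial_{y}^{2}+\omega)\partial_{a}\varphi$ and $\partial_{\omega}\{(-\partial_{x}^{2}-\partial_{y}^{2}+\omega)\varphi\}=(-\partial_{x}^{2}-\partial_{y}^{2}+\omega)\partial_{\omega}\varphi+\varphi$. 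Combining these with $\partial_{a}V_{0}=V_{1}\partial_{a}\varphi$ and $\partial_{\omega}V_{0}=V_{1}\partial_{\omega}\varphi$ from \eqref{d-phi-p-a}, \eqref{d-phi-p-w}, and recalling $\mathbf{L}_{+}(\omega,a)=-\partial_{x}^{2}-\partial_{y}^{2}+\omega-V_{1}(\omega,a)$, I obtain in $L^{2}(\R\times\T)$ that
\begin{equation*}
\partial_{a}\mathcal{F}(\omega,\varphi)=\mathbf{L}_{+}(\omega,a)\partial_{a}\varphi,
\qquad
\partial_{\omega}\mathcal{F}(\omega,\varphi)=\mathbf{L}_{+}(\omega,a)\partial_{\omega}\varphi+\varphi.
\end{equation*}
Applying Lemma \ref{p-d} to move $\partial_{a}$ and $\partial_{\omega}$ inside $\langle\,\cdot\,,\psi_{\omega_{p}}\cos y\rangle$ then produces \eqref{eq5-35} at once, and produces \eqref{eq5-36} after identifying the extra term $\langle\varphi,\psi_{\omega_{p}}\cos y\rangle$ with $a$ by means of \eqref{eq5-34}.

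Finally I would check that the two derivatives depend continuously on $(\omega,a)$, which upgrades the computation to genuine $C^{1}$-regularity. The linear contribution is continuous because $\partial_{a}\varphi,\partial_{\omega}\varphi$ are continuous into $H^{2}$; the only term requiring care is the potential piece $\langle V_{1}\partial\varphi,\psi_{\omega_{p}}\cos y\rangle$, and here the pairing against the fixed, exponentially decaying function $\psi_{\omega_{p}}\cos y$ reduces matters to a scalar integral whose integrand is dominated, uniformly in $(\omega,a)$, by an integrable function through the exponential bounds of Propositions \ref{lem-up-low-b} and \ref{c1-uni-decay}; dominated convergence then yields continuity. I expect the only genuinely delicate point—the failure of the nonlinearity to be twice differentiable when $1<p<2$—not to obstruct this argument, since at the level of first derivatives it has already been absorbed into the $L^{2}$-identities \eqref{d-phi-p-a} and \eqref{d-phi-p-w}, which in turn rely on the positivity and $C^{1}$-dependence of $\varphi$. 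Thus the main task is the careful bookkeeping needed to assemble the cited lemmas in the correct order.
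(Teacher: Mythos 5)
Your identities and the route to them are essentially the paper's own: differentiate $\mathcal{F}(\omega,\varphi(\omega,a))$ in $L^{2}(\R\times\T)$ (linear part via Lemma \ref{d-lap}, nonlinear part via \eqref{d-phi-p-a}--\eqref{d-phi-p-w}), pass $\partial_{a}$ and $\partial_{\omega}$ through the pairing by Lemma \ref{p-d}, and use \eqref{eq5-34} to produce the extra term $a$ in \eqref{eq5-36}. The algebra is correct.

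There is, however, a genuine domain gap. The proposition asserts $C^{1}$-regularity and the formulas on the full rectangle $(\omega_{p}-\delta_{0},\omega_{p}+\delta_{0})\times(-a_{0},a_{0})$, but your opening move --- ``$\varphi$ is positive by Proposition \ref{lem-up-low-b}, so the absolute value disappears and the nonlinearity is $\varphi^{p}$'' --- is available only for $|a|<a_{\varepsilon}$, where $a_{\varepsilon}<a_{0}$ is the constant of that proposition; the same restriction afflicts the exponential bounds of Propositions \ref{lem-up-low-b} and \ref{c1-uni-decay} that you invoke for the continuity of the potential term. (This is precisely why the paper states Propositions \ref{2d-f-para} and \ref{3d-f-para}, whose proofs genuinely need positivity and decay, only on $(-a_{\varepsilon},a_{\varepsilon})$, whereas Proposition \ref{cor1-F-par} is claimed on all of $(-a_{0},a_{0})$.) As written, your argument proves the statement only on the smaller rectangle. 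The repair is to avoid positivity altogether: for $p>1$ the scalar map $s\mapsto|s|^{p-1}s$ is $C^{1}$ on all of $\R$ with derivative $p|s|^{p-1}$, so the difference-quotient argument gives $\partial\,\{|\varphi|^{p-1}\varphi\}=p|\varphi|^{p-1}\,\partial\varphi$ in $L^{2}$ on the whole of $(\omega_{p}-\delta_{0},\omega_{p}+\delta_{0})\times(-a_{0},a_{0})$, using only the $C^{1}$-dependence of $\varphi$ in $H^{2}$ (Lemma \ref{main-lem-2}) and the embedding $H^{2}(\R\times\T)\hookrightarrow L^{\infty}(\R\times\T)$; and the continuity in $(\omega,a)$ of $p|\varphi|^{p-1}\,\partial\varphi$ in $L^{2}$ follows from the $(p-1)$-H\"older (Lipschitz if $p\ge 2$) continuity of $s\mapsto|s|^{p-1}$ together with the $L^{\infty}$-continuity of $\varphi$ and the $L^{2}$-continuity of $\partial\varphi$ --- no decay estimates are needed. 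Since $\mathbf{L}_{+}(\omega,a)=\partial_{u}\mathcal{F}(\omega,\varphi(\omega,a))$ is exactly $-\partial_{x}^{2}-\partial_{y}^{2}+\omega-p|\varphi(\omega,a)|^{p-1}$, this yields \eqref{eq5-35}--\eqref{eq5-36} on the full range. (A minor, harmless slip: $\partial_{a}\varphi(\omega,a)\notin X_{2}$ --- by \eqref{der-a-h} it equals $\psi_{\omega_{p}}\cos y$ plus an element of $X_{2}$ --- but all your argument needs is $\partial_{a}\varphi\in H^{2}$.)
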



The following result follows from Proposition \ref{cor1-F-par}, Lemma \ref{p-d}, Lemma \ref{d-L} and Proposition \ref{main-lem-1}: 
\begin{proposition}\label{2d-f-para}
Assume $p>1$. Let $\varepsilon_{2}$ be the constant given in Proposition \ref{main-lem-1}  and $0< \varepsilon <\varepsilon_{2}$.  
 Then, $\mathcal{F}_{\parallel}$ is $C^{2}$ on $(\omega_{p} - \delta_{0}, \omega_{p}+ \delta_{0}) \times (-a_{\varepsilon}, a_{\varepsilon})$, and 
the following hold for all $(\omega, a) 
\in (\omega_{p} - \delta_{0},~\omega_{p}+ \delta_{0}) \times 
(-a_{\varepsilon}, a_{\varepsilon})$:
\begin{align} 
\label{eq5-37}
&
\partial_{a}^{2} \mathcal{F}_{\parallel}(\omega, a)
= 
\langle \mathbf{L}_{+}(\omega, a) \partial_{a}^{2}\varphi(\omega, a)
-
V_{2}(\omega, a) \{\partial_{a}\varphi(\omega, a)\}^{2}
, 
\psi_{\omega_{p}} \cos{y}
\rangle 
, 
\\[6pt]
\label{eq5-38}
&\partial_{a} \partial_{\omega} 
\mathcal{F}_{\parallel}(\omega, a)
=
\partial_{\omega} \partial_{a} 
\mathcal{F}_{\parallel}(\omega, a)
\\[6pt]
\nonumber 
&=
1
+
\langle \mathbf{L}_{+}(\omega, a) \partial_{\omega}\partial_{a}\varphi(\omega, a)
-
V_{2}(\omega, a) 
\partial_{a}\varphi(\omega, a)
\partial_{\omega}\varphi(\omega, a)
, 
\psi_{\omega_{p}} \cos{y}
\rangle 
, 
\\[6pt]
\label{eq5-39}
&
\partial_{\omega}^{2} \mathcal{F}_{\parallel}(\omega, a)
=
\langle \mathbf{L}_{+}(\omega, a) \partial_{\omega}^{2}\varphi(\omega, a)
-
V_{2}(\omega, a) \{\partial_{\omega}\varphi(\omega, a)\}^{2}
, 
\psi_{\omega_{p}} \cos{y}
\rangle 
.
\end{align}
\end{proposition}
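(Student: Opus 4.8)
The plan is to differentiate once more the first-order formulas \eqref{eq5-35} and \eqref{eq5-36} established in Proposition \ref{cor1-F-par}. The two tools that make this legitimate are Lemma \ref{p-d}, which lets me pull a partial derivative inside the pairing $\langle\,\cdot\,,\psi_{\omega_{p}}\cos y\rangle$ against the fixed function $\psi_{\omega_{p}}\cos y$, and Lemma \ref{d-L}, which is the product rule for $\partial\{\mathbf{L}_{+}(\omega,a)f(\omega,a)\}$. The crucial hypothesis for invoking Lemma \ref{d-L} is that the function $f$ fed to $\mathbf{L}_{+}$ has partial derivatives in $H^{2}(\R\times\T)$; here $f$ will be $\partial_{a}\varphi(\omega,a)$ or $\partial_{\omega}\varphi(\omega,a)$, and exactly this regularity is what Proposition \ref{main-lem-1} supplies, since it asserts $\varphi\in C^{2}$ together with the explicit formulas \eqref{der2-h-a}--\eqref{der2-h-a-w} for the second derivatives. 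Thus the whole argument rests on Proposition \ref{main-lem-1}, and the remainder is careful bookkeeping.

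Concretely, for \eqref{eq5-37} I would apply Lemma \ref{p-d} to \eqref{eq5-35} to write $\partial_{a}^{2}\mathcal{F}_{\parallel}=\langle\partial_{a}\{\mathbf{L}_{+}(\omega,a)\partial_{a}\varphi(\omega,a)\},\psi_{\omega_{p}}\cos y\rangle$, and then expand the bracket by Lemma \ref{d-L} with $f=\partial_{a}\varphi$, producing $\mathbf{L}_{+}(\omega,a)\partial_{a}^{2}\varphi-V_{2}(\omega,a)\{\partial_{a}\varphi\}^{2}$. For \eqref{eq5-38} and \eqref{eq5-39} the same procedure is applied to \eqref{eq5-36} with $f=\partial_{\omega}\varphi$; the explicit $+a$ term in \eqref{eq5-36} contributes $\partial_{a}a=1$ under $\partial_{a}$ (the leading $1$ in \eqref{eq5-38}) and $\partial_{\omega}a=0$ under $\partial_{\omega}$. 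The one point deserving care is the extra summand $f$ produced by the $\partial_{\omega}$-version of Lemma \ref{d-L}: differentiating \eqref{eq5-36} in $\omega$ yields an additional $\langle\partial_{\omega}\varphi,\psi_{\omega_{p}}\cos y\rangle$, which vanishes because $\partial_{\omega}\langle\varphi,\psi_{\omega_{p}}\cos y\rangle=\partial_{\omega}a=0$ by \eqref{eq5-34}; this is why \eqref{eq5-39} carries no such term. Dually, computing $\partial_{\omega}\partial_{a}\mathcal{F}_{\parallel}$ from \eqref{eq5-35} produces an extra $\langle\partial_{a}\varphi,\psi_{\omega_{p}}\cos y\rangle=\partial_{a}a=1$ by \eqref{eq5-34}, reproducing the same leading $1$ as in \eqref{eq5-38}; together with the symmetry $\partial_{\omega}\partial_{a}\varphi=\partial_{a}\partial_{\omega}\varphi$ from \eqref{der2-h-a-w}, this yields the equality of mixed partials asserted in \eqref{eq5-38}.

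Finally I would verify that the second derivatives are continuous, upgrading ``twice differentiable'' to $C^{2}$. Each right-hand side in \eqref{eq5-37}--\eqref{eq5-39} is the $L^{2}$-pairing of the fixed function $\psi_{\omega_{p}}\cos y$ against an expression continuous in $(\omega,a)$: the terms of the form $\mathbf{L}_{+}(\omega,a)\,\partial^{2}\varphi(\omega,a)$ are continuous because $\partial^{2}\varphi$ is continuous in $H^{2}$ (Proposition \ref{main-lem-1}) and the potential $V_{1}(\omega,a)$ depends continuously on $(\omega,a)$, while the quadratic terms $V_{2}(\omega,a)\{\partial\varphi\}^{2}$ are controlled in $L^{2}$ and shown continuous using the exponential decay bounds \eqref{9/7-9:33}, \eqref{9/7-9:32} and Lemma \ref{9/2-13:45}. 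Since pairing against a fixed $L^{2}$ function is a bounded linear functional, continuity in $(\omega,a)$ of the argument transfers to continuity of the scalar second derivatives. I expect no genuine obstacle at this stage: all the analytic difficulty, namely the failure of the nonlinearity to be $C^{2}$ when $1<p<2$ and the resulting need for decay estimates, has already been absorbed into Proposition \ref{main-lem-1} and the decay lemmas, so the present proposition reduces to a clean differentiation of a scalar pairing.
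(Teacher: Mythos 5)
Your proposal is correct and follows essentially the same route as the paper: the paper derives this proposition precisely from Proposition \ref{cor1-F-par}, Lemma \ref{p-d}, Lemma \ref{d-L} and Proposition \ref{main-lem-1}, which are exactly the tools you invoke, with the second derivatives of $\varphi$ in $H^{2}(\R \times \T)$ from Proposition \ref{main-lem-1} licensing the application of Lemma \ref{d-L} to $f=\partial_{a}\varphi$ and $f=\partial_{\omega}\varphi$. Your bookkeeping of the extra terms (the $+1$ from $\partial_{a}a$ via \eqref{eq5-34}, the vanishing of $\langle \partial_{\omega}\varphi,\psi_{\omega_{p}}\cos y\rangle$, and the symmetry of mixed partials from \eqref{der2-h-a-w}) is exactly the right justification for the form of \eqref{eq5-37}--\eqref{eq5-39}.
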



The following result follows from 
Propositions \ref{main-lem-3} and 
\ref{2d-f-para}, Lemmas \ref{p-d}, 
\ref{d-L} and \ref{lem-phi-v2}: 
\begin{proposition}\label{3d-f-para}
Assume $p>1$. Let $\varepsilon_{3}$ be the same constant given in Proposition {main-lem-3} and $0<\varepsilon< \varepsilon_{3}$. 
Then, $\mathcal{F}_{\parallel}$ is $C^{3}$ on $(\omega_{p} - \delta_{0}, \omega_{p}+ \delta_{0}) \times (-a_{\varepsilon}, a_{\varepsilon})$, and 
the following hold for all $(\omega, a) \in (\omega_{p} - \delta_{0}, \omega_{p}+ \delta_{0}) \times (-a_{\varepsilon}, a_{\varepsilon})$: 
\begin{align} 
\label{dF-aaa}
&\partial_{a}^{3} \mathcal{F}_{\parallel}(\omega, a)
\\[6pt]
\nonumber 
&= 
\langle \mathbf{L}_{+}(\omega, a) \partial_{a}^{3}\varphi(\omega, a)
-
3 V_{2}(\omega, a) \partial_{a}\varphi(\omega, a) \partial_{a}^{2}\varphi(\omega, a)
-
V_{3}(\omega, a) \{\partial_{a}\varphi(\omega, a)\}^{3}
, 
\psi_{\omega_{p}} \cos{y}
\rangle 
, 
\\[6pt]
\label{dF-aaw}
&
\partial_{a}^{2} \partial_{\omega} \mathcal{F}_{\parallel}(\omega, a)
=
\partial_{a} \partial_{\omega} \partial_{a} \mathcal{F}_{\parallel}(\omega, a)
=
\partial_{\omega}\partial_{a}^{2} \mathcal{F}_{\parallel}(\omega, a)
\\[6pt]
\nonumber 
&= 
\langle \mathbf{L}_{+}(\omega, a) \partial_{\omega}\partial_{a}^{2}\varphi(\omega, a)
-
V_{2}(\omega, a) \partial_{\omega}\varphi(\omega, a) \partial_{a}^{2}\varphi(\omega, a)
\\[6pt]
\nonumber 
&\qquad-
2V_{2}(\omega, a) \partial_{a}\varphi(\omega, a) \partial_{\omega}\partial_{a}\varphi(\omega, a)
-
V_{3}(\omega, a) 
\{\partial_{a}\varphi(\omega, a)\}^{2}
\partial_{\omega}\varphi(\omega, a)
, 
\psi_{\omega_{p}} \cos{y}
\rangle 
\\[6pt]
\label{dF-aww}
&
\partial_{a} \partial_{\omega}^{2} \mathcal{F}_{\parallel}(\omega, a)
=
\partial_{\omega} \partial_{a} \partial_{\omega} \mathcal{F}_{\parallel}(\omega, a)
=
\partial_{\omega}^{2} \partial_{a} \mathcal{F}_{\parallel}(\omega, a)
\\[6pt]
\nonumber 
&= 
\langle \mathbf{L}_{+}(\omega, a) \partial_{\omega}^{2}\partial_{a}\varphi(\omega, a)
-
V_{2}(\omega, a) \partial_{a}\varphi(\omega, a) \partial_{\omega}^{2}\varphi(\omega, a)
\\[6pt]
\nonumber 
&\qquad-
2V_{2}(\omega, a) \partial_{\omega}\varphi(\omega, a) \partial_{\omega}\partial_{a}\varphi(\omega, a)
-
V_{3}(\omega, a) 
\{\partial_{\omega}\varphi(\omega, a)\}^{2}
\partial_{a}\varphi(\omega, a)
, 
\psi_{\omega_{p}} \cos{y}
\rangle 
\\[6pt]
\label{dF-www}
&\partial_{\omega}^{3} \mathcal{F}_{\parallel}(\omega, a)
\\[6pt]
\nonumber 
&= 
\langle \mathbf{L}_{+}(\omega, a) \partial_{\omega}^{3}\varphi(\omega, a)
-
3 V_{2}(\omega, a) \partial_{\omega}\varphi(\omega, a) \partial_{\omega}^{2}\varphi(\omega, a)
-
V_{3}(\omega, a) \{\partial_{\omega}\varphi(\omega, a)\}^{3}
, 
\psi_{\omega_{p}} \cos{y}
\rangle 
.
\end{align}
\end{proposition}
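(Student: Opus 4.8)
The plan is to differentiate the second-derivative formulas \eqref{eq5-37}--\eqref{eq5-39} of Proposition \ref{2d-f-para} one more time, using exactly the same mechanism by which those second derivatives were themselves produced. As in the proofs of Propositions \ref{main-lem-1} and \ref{main-lem-3}, when $p \ge 3$ the nonlinearity $x \mapsto x^{p}$ is $C^{3}$, so the implicit function theorem already yields that $\varphi$, and hence $\mathcal{F}_{\parallel}$, is $C^{3}$; I may therefore assume $1 < p < 3$ and carry out the differentiation by hand, drawing all regularity of $\varphi$ from Proposition \ref{main-lem-3}, which supplies the third derivatives $\partial^{3}\varphi$ and guarantees $\varphi \in C^{3}$ in the $H^{2}(\R\times\T)$-topology.

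First I would establish \eqref{dF-aaa}. Starting from \eqref{eq5-37}, Lemma \ref{p-d} lets me move $\partial_{a}$ inside the pairing $\langle\,\cdot\,,\psi_{\omega_{p}}\cos y\rangle$, provided the argument is differentiable in $L^{2}(\R\times\T)$. To differentiate $\mathbf{L}_{+}(\omega, a)\partial_{a}^{2}\varphi(\omega, a)$ I apply Lemma \ref{d-L} with $f = \partial_{a}^{2}\varphi$, which is legitimate because $\partial_{a}^{2}\varphi$ is $C^{1}$ in $H^{2}(\R\times\T)$ by Proposition \ref{main-lem-3}; this yields $\mathbf{L}_{+}(\omega,a)\partial_{a}^{3}\varphi - V_{2}(\omega,a)\partial_{a}\varphi\,\partial_{a}^{2}\varphi$. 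To differentiate $V_{2}(\omega,a)\{\partial_{a}\varphi(\omega,a)\}^{2}$ I invoke Lemma \ref{lem-phi-v2} with $k = 2$ and $g = \{\partial_{a}\varphi\}^{2}$, producing $V_{3}(\omega,a)\{\partial_{a}\varphi\}^{3} + 2V_{2}(\omega,a)\partial_{a}\varphi\,\partial_{a}^{2}\varphi$. Collecting the two $V_{2}\partial_{a}\varphi\,\partial_{a}^{2}\varphi$ contributions produces the coefficient $3$ in \eqref{dF-aaa}. The remaining identities \eqref{dF-aaw}--\eqref{dF-www} follow by the identical procedure applied to \eqref{eq5-38}--\eqref{eq5-39}; here the extra $-V_{2}\partial\varphi\,(\cdots)$ terms from Lemma \ref{d-L} supply the mixed $V_{2}$ contributions, and the additive $+f$ term in the $\omega$-branch of Lemma \ref{d-L} accounts for the lower-order $\partial^{2}\varphi$ summands appearing on the right-hand sides.

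Before invoking Lemma \ref{lem-phi-v2} I must verify its hypotheses \eqref{eq5-18}--\eqref{eq5-19} for each $g$ that occurs, namely $\{\partial_{a}\varphi\}^{2}$, $\partial_{\omega}\varphi\,\partial_{a}\varphi$, $\{\partial_{\omega}\varphi\}^{2}$, and their first derivatives. The pointwise decay \eqref{eq5-18} follows from the gradient estimate \eqref{9/7-9:33} of Proposition \ref{c1-uni-decay}, while the decay \eqref{eq5-19} of $\partial g$ uses in addition the second-derivative estimate \eqref{dec2-1} of Lemma \ref{9/2-13:45}; choosing $\varepsilon_{3}$ small enough, depending only on $p$, keeps every exponent of the form $-k\sqrt{\omega}|x| + j\varepsilon|x|$ inside the admissible range $k > p$ demanded by Lemma \ref{lem-phi-v2}. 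This exponential bookkeeping---ensuring that each product of decaying factors remains square-integrable even after multiplication by the growing coefficients $V_{k} \lesssim e^{(k-p)\sqrt{\omega}|x|+\cdots}$---is the main technical obstacle, and it is precisely what the uniform decay estimates of Section \ref{21/11/14/12:12} were designed to control.

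Finally, the continuity of each third derivative in $(\omega, a)$, and thus the genuine $C^{3}$ regularity of $\mathcal{F}_{\parallel}$, is read off from the formulas once they are in hand: $\partial^{3}\varphi$ is continuous in $H^{2}(\R\times\T)$ by Proposition \ref{main-lem-3}, the coefficients $V_{k}(\omega,a)$ depend continuously on $(\omega,a)$, and the pairing against $\psi_{\omega_{p}}\cos y$ is continuous by Cauchy--Schwarz together with the decay \eqref{eq1-11}. The equalities among the mixed partials asserted in \eqref{dF-aaw} and \eqref{dF-aww} are then automatic from the established $C^{3}$ regularity by the symmetry of higher-order derivatives.
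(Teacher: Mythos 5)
Your proposal follows the paper's route exactly: the paper offers no detailed proof of this proposition, stating only that it ``follows from Propositions \ref{main-lem-3} and \ref{2d-f-para}, Lemmas \ref{p-d}, \ref{d-L} and \ref{lem-phi-v2},'' and these are precisely the tools you deploy in precisely that way. Your detailed derivation of \eqref{dF-aaa}, including the collection of the two $V_{2}\,\partial_{a}\varphi\,\partial_{a}^{2}\varphi$ contributions into the coefficient $3$, is correct, as are your checks of the hypotheses of Lemma \ref{lem-phi-v2} via \eqref{9/7-9:33} and \eqref{dec2-1} and your continuity argument.

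There is, however, one bookkeeping slip in your treatment of the mixed and pure $\omega$-derivatives. You assert that the additive $+f$ term in the $\omega$-branch of Lemma \ref{d-L} ``accounts for the lower-order $\partial^{2}\varphi$ summands appearing on the right-hand sides.'' It does not: every $\partial^{2}\varphi$ summand in \eqref{dF-aaw}--\eqref{dF-www} carries a coefficient $V_{2}\,\partial\varphi$, and all of these arise from the $-V_{2}\,\partial\varphi\,f$ term of Lemma \ref{d-L} together with Lemma \ref{lem-phi-v2}. What the $+f$ term actually produces, when you differentiate \eqref{eq5-37}--\eqref{eq5-39} in $\omega$ with $f$ equal to $\partial_{a}^{2}\varphi$, $\partial_{\omega}\partial_{a}\varphi$ or $\partial_{\omega}^{2}\varphi$, is the bare pairing $\langle \partial^{2}\varphi,\ \psi_{\omega_{p}}\cos y\rangle$, which is absent from the stated formulas. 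Your computation therefore matches \eqref{dF-aaw}, \eqref{dF-aww} and \eqref{dF-www} only after you observe that this pairing vanishes, because the second derivatives of $\varphi$ lie in $X_{2}$ (Remark \ref{rem-2d-phi}) and are hence orthogonal to $\psi_{\omega_{p}}\cos y$. This observation cannot be sidestepped for \eqref{dF-www}, where an $\omega$-differentiation is unavoidable; with it in hand the argument closes, so this is a one-line repair rather than a structural gap.
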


\section{Proof of Theorem \ref{main-prop}}\label{21/10/27/11:47}
This section is devoted to the proof 
of Theorem \ref{main-prop}. 
 Throughout this section, for a given $p>1$, let $a_{0}$, $\delta_{0}$ and $\eta \colon (\omega_{p}-\delta_{0}, \omega_{p}+\delta_{0}) \times (-a_{0},a_{0}) \to X_{2}$ be the same as in Lemma \ref{main-lem-2},  $\varphi(\omega,a)$ be the function defined by \eqref{eq2-10}, and $\varepsilon_{3}$ be the constant given in Proposition \ref{main-lem-3}. 
 Furthermore, for $0<\varepsilon <\varepsilon_{3}$, we use $a_{\varepsilon}$ to denote the same constant given in Proposition \ref{lem-up-low-b}.  

We will empoly the argument developed by Crandall and Rabinowitz~\cite{Crandall-Rabinowitz} (see also the proof of Theorem 4 {\rm (ii)} in \cite{KKP}). Let $0<\varepsilon<\varepsilon_{3}$, and define 
\begin{equation}\label{21/11/2/16:4}
D_{\varepsilon}
:= (\omega_{p}-\delta_{0},~\omega_{p}+\delta_{0}) 
\times (-a_{\varepsilon},a_{\varepsilon})
.
\end{equation} 
Furthermore, we introduce the function $g \colon  D_{\varepsilon}\to \mathbb{R}$ as  
\begin{equation} \label{g-eq}
g(\omega, a) 
:= 
\begin{cases}
\dfrac{\mathcal{F}_{\parallel}(\omega, a)}{a}
&\mbox{if $a \neq 0$}, 
\\[6pt]
\p_{a} \mathcal{F}_{\parallel}(\omega, 0) 
& \mbox{if $a = 0$}
.
\end{cases}
\end{equation}


\begin{lemma}\label{21/11/9/11:7}
Assume $p>1$ and let $0<\varepsilon <\varepsilon_{3}$. Then, $g$ is twice differentiable on $D_{\varepsilon}$. Furthermore, the following hold: 
\begin{align}
\label{21/11/7/12:24}
\p_{\omega} g (\omega, 0)
&=
\p_{\omega}\p_{a} \mathcal{F}_{\parallel}(\omega, 0)
=
\p_{a} \p_{\omega} \mathcal{F}_{\parallel}(\omega, 0),
\\[6pt]
\label{21/11/7/14:41}
\p_{\omega}^{2} g (\omega, 0)
&=
\p_{\omega}^{2} \p_{a} 
\mathcal{F}_{\parallel}(\omega, 0)
,
\\[6pt]
\label{21/11/9/11:14}
\p_{a} g (\omega, 0)
&=
\frac{1}{2}\p_{a}^{2} \mathcal{F}_{\parallel}(\omega, 0),
\\[6pt]
\label{21/11/9/11:22}
\p_{\omega}\p_{a} g (\omega, 0)
=\p_{a} \p_{\omega} g (\omega, 0)
&=
\frac{1}{2}\p_{\omega}\p_{a}^{2} \mathcal{F}_{\parallel}(\omega, 0),
\\[6pt]
\label{21/11/9/11:23}
\p_{a}^{2}g(\omega,0)
&=
\frac{1}{3} \p_{a}^{3} \mathcal{F}_{\parallel}(\omega,0)
.
\end{align} 
\end{lemma}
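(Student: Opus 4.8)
The plan is to remove the apparent singularity of $g$ at $a=0$ by rewriting it as an integral, and then to transfer all regularity and all derivative formulas from $\mathcal{F}_{\parallel}$ (which is $C^{3}$ on $D_{\varepsilon}$ by Proposition \ref{3d-f-para}) to $g$ via differentiation under the integral sign. The starting observation is that $\mathcal{F}_{\parallel}(\omega,0)=0$ for every $\omega$ (see \eqref{eq2-12}), so for $a\neq 0$ the fundamental theorem of calculus gives
\[
g(\omega,a)=\frac{\mathcal{F}_{\parallel}(\omega,a)-\mathcal{F}_{\parallel}(\omega,0)}{a}=\int_{0}^{1}\p_{a}\mathcal{F}_{\parallel}(\omega,ta)\,dt,
\]
and the right-hand side equals $\p_{a}\mathcal{F}_{\parallel}(\omega,0)$ at $a=0$. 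Hence this integral representation is valid on all of $D_{\varepsilon}$, including the line $a=0$, and it agrees with the definition \eqref{g-eq} there.

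Next I would establish twice differentiability. Since $\mathcal{F}_{\parallel}$ is $C^{3}$, the integrand $(\omega,a)\mapsto\p_{a}\mathcal{F}_{\parallel}(\omega,ta)$ is $C^{2}$ jointly in $(\omega,a)$, with all second-order partials continuous on $[0,1]$ times a neighborhood; the standard theorem on differentiation under the integral sign then yields $g\in C^{2}(D_{\varepsilon})$ and permits pulling $\p_{\omega}$ and $\p_{a}$ inside the integral. The only bookkeeping point is that each application of $\p_{a}$ to the integrand produces a factor $t$ through the chain rule, e.g. $\p_{a}\bigl[\p_{a}\mathcal{F}_{\parallel}(\omega,ta)\bigr]=t\,\p_{a}^{2}\mathcal{F}_{\parallel}(\omega,ta)$.

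The derivative formulas then follow by evaluating at $a=0$ and using $\int_{0}^{1}t^{k}\,dt=1/(k+1)$. Differentiating only in $\omega$ gives
\[
\p_{\omega}g(\omega,0)=\p_{\omega}\p_{a}\mathcal{F}_{\parallel}(\omega,0),\qquad \p_{\omega}^{2}g(\omega,0)=\p_{\omega}^{2}\p_{a}\mathcal{F}_{\parallel}(\omega,0),
\]
which is \eqref{21/11/7/12:24} and \eqref{21/11/7/14:41}, the equality of the mixed $(\omega,a)$-partials in \eqref{21/11/7/12:24} being Schwarz's theorem for the $C^{3}$ function $\mathcal{F}_{\parallel}$. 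Applying $\p_{a}$ once introduces a factor $t$, so $\p_{a}g(\omega,0)=\int_{0}^{1}t\,\p_{a}^{2}\mathcal{F}_{\parallel}(\omega,0)\,dt=\tfrac12\p_{a}^{2}\mathcal{F}_{\parallel}(\omega,0)$, giving \eqref{21/11/9/11:14}; similarly $\p_{\omega}\p_{a}g(\omega,0)=\tfrac12\p_{\omega}\p_{a}^{2}\mathcal{F}_{\parallel}(\omega,0)$ for \eqref{21/11/9/11:22}, with $\p_{\omega}\p_{a}g=\p_{a}\p_{\omega}g$ by Schwarz's theorem applied to the $C^{2}$ function $g$. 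Finally, a second $\p_{a}$ contributes $t^{2}$, so $\p_{a}^{2}g(\omega,a)=\int_{0}^{1}t^{2}\,\p_{a}^{3}\mathcal{F}_{\parallel}(\omega,ta)\,dt$, whence $\p_{a}^{2}g(\omega,0)=\tfrac13\p_{a}^{3}\mathcal{F}_{\parallel}(\omega,0)$, which is \eqref{21/11/9/11:23}.

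The one genuinely non-routine point is the very existence of $\p_{a}^{2}g$: in the integral form this requires differentiating $\p_{a}^{2}\mathcal{F}_{\parallel}(\omega,ta)$ once more in $a$, and hence uses the full third-order differentiability of $\mathcal{F}_{\parallel}$. That is precisely the content of Proposition \ref{3d-f-para}, whose proof remains valid even for $1<p<2$ where the nonlinearity of \eqref{sp} is not pointwise $C^{3}$, and which constitutes the real analytic work. Once the $C^{3}$ input is granted, every step above is elementary.
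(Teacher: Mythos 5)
Your proof is correct, but it follows a genuinely different route from the paper's. The paper works directly with difference quotients: it Taylor-expands $\mathcal{F}_{\parallel}(\omega,\cdot)$, $\p_{\omega}\mathcal{F}_{\parallel}(\omega,\cdot)$ and $\p_{a}\mathcal{F}_{\parallel}(\omega,\cdot)$ to third (resp.\ second) order with Peano remainders, and computes each derivative of $g$ at $(\omega,0)$ as an explicit limit of quotients (the factor $\tfrac12$ and $\tfrac13$ emerging from cancellations in those expansions), with the off-axis case $a\neq 0$ handled separately through the quotient rule. You instead remove the singularity once and for all via the Hadamard-type representation $g(\omega,a)=\int_{0}^{1}\p_{a}\mathcal{F}_{\parallel}(\omega,ta)\,dt$, valid on all of $D_{\varepsilon}$ because $\mathcal{F}_{\parallel}(\omega,0)=0$ (equation \eqref{eq2-12}) and because $D_{\varepsilon}$ is star-shaped in $a$ about $a=0$; then all formulas follow from Leibniz differentiation under the integral sign, the chain-rule factors $t$, $t^{2}$, and $\int_{0}^{1}t^{k}\,dt=\tfrac{1}{k+1}$. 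Both arguments rest on exactly the same analytic input, namely Proposition \ref{3d-f-para} ($\mathcal{F}_{\parallel}\in C^{3}$ on $D_{\varepsilon}$, including the equality of mixed third-order partials), so neither is circular. Your route buys something extra: since the integrand and its $(\omega,a)$-partials up to second order are jointly continuous on $[0,1]\times D_{\varepsilon}$, you obtain $g\in C^{2}(D_{\varepsilon})$ outright, which is strictly stronger than the twice differentiability claimed in the lemma and in fact subsumes the regularity half of the paper's subsequent Lemma \ref{g-pro}, whose proof consists of further limit computations (\eqref{21/11/9/17:53}, \eqref{21/11/10/11:29}, \eqref{21/11/9/11:51}) that your argument renders unnecessary; only the evaluations in \eqref{21/7/31/17:26} would remain to be done there. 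The paper's approach, in exchange, stays entirely at the level of elementary difference quotients and never invokes a differentiation-under-the-integral theorem.
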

\begin{proof}[Proof of Lemma \ref{21/11/9/11:7}]
Since $\mathcal{F}_{\parallel}$ is $C^{3}$ on $D_{\varepsilon}$ (see Proposition \ref{3d-f-para}), it is obvious that $g$ is twice differentiable at $(\omega,a)\in D_{\varepsilon}$ with $a\neq 0$. 
 Furthermore, we can easily verify \eqref{21/11/7/12:24} and \eqref{21/11/7/14:41}. 

It remains to prove the twice differentiablity of $g$ at $(\omega,0)$  
 and \eqref{21/11/9/11:14} through \eqref{21/11/9/11:23}. 
 The Taylor expansion together with $\mathcal{F}_{\parallel}(\omega, 0)=0$ (see \eqref{eq2-12}) shows that 
\begin{equation}\label{21/11/2/16:33}
\mathcal{F}_{\parallel}(\omega, a)
=
\p_{a} \mathcal{F}_{\parallel}(\omega, 0) a 
+
\frac{1}{2} 
\p_{a}^{2} \mathcal{F}_{\parallel}(\omega, 0) a^{2}
+
\frac{1}{3!}
\p_{a}^{3}\mathcal{F}_{\parallel}(\omega,0)a^{3}
+
o(a^{3})
.
\end{equation}
Furthermore, by Proposition \ref{cor1-F-par}, \eqref{21/11/7/15:47} and the integral of $\cos{y}$ over $[-\pi,\pi]$ being zero, 
 we see that 
\begin{equation}\label{21/11/7/13:35}
\partial_{\omega}\mathcal{F}_{\parallel}(\omega, 0)
=
\langle \mathbf{L}_{+}(\omega, 0)\partial_{\omega}\varphi(\omega, 0),~ 
\psi_{\omega_{p}} \cos{y}
\rangle 
=
\langle -R_{\omega},\psi_{\omega_{p}} \cos{y}
\rangle =0
.
\end{equation}  
Hence, the Taylor expansion together with \eqref{21/11/7/13:35} shows that 
\begin{equation}\label{21/11/7/15:59}
\p_{\omega}\mathcal{F}_{\parallel}(\omega, a)
=
\p_{a} \p_{\omega}\mathcal{F}_{\parallel}(\omega, 0) a 
+
\frac{1}{2} 
\p_{a}^{2} \p_{\omega}\mathcal{F}_{\parallel}(\omega, 0) a^{2}
+
o(a^{2})
.
\end{equation}
Moreover, we will use the Taylor exapnasion of  $\p_{a} \mathcal{F}_{\parallel}(\omega, a)$: 
\begin{equation}\label{21/11/8/12:40}
\p_{a} \mathcal{F}_{\parallel}(\omega, a)
=
\p_{a} \mathcal{F}_{\parallel}(\omega,0)
+
\p_{a}^{2} \mathcal{F}_{\parallel}(\omega,0)a 
+
\frac{1}{2} \p_{a}^{3} \mathcal{F}_{\parallel}(\omega,0)a^{2}
+
o(a^{2})
. 
\end{equation}


The claim \eqref{21/11/9/11:14} follows from \eqref{21/11/2/16:33}: 
\begin{equation}\label{21/11/2/16:42}
\begin{split}
\p_{a} g (\omega, 0)
&=\lim_{a\to 0} \frac{g(\omega,a)-g(\omega,0)}{a} 
=
\lim_{a\to 0} 
\dfrac{\mathcal{F}_{\parallel}(\omega, a)-  a \p_{a} \mathcal{F}_{\parallel}(\omega, 0)  }{a^{2}}
=
\frac{1}{2}\p_{a}^{2} \mathcal{F}_{\parallel}(\omega, 0) 
.
\end{split}
\end{equation} 
Furthermore, \eqref{21/11/2/16:42} shows that
\begin{equation}\label{21/11/7/14:57}
\p_{\omega}\p_{a} g (\omega, 0)
=
\lim_{\delta \to 0} \frac{\p_{a}g(\omega+\delta,0)-\p_{a}g(\omega,0)}{\delta} 
=
\frac{1}{2}\p_{\omega}\p_{a}^{2} \mathcal{F}_{\parallel}(\omega, 0)
.
\end{equation} 
 By \eqref{21/11/7/12:24} and \eqref{21/11/7/15:59}, we see that 
\begin{equation}\label{21/11/7/15:20}
\p_{a} \p_{\omega} g (\omega, 0)
=
\lim_{a\to 0}
\dfrac{ \p_{\omega}\mathcal{F}_{\parallel}(\omega, a)
- 
a \p_{a}\p_{\omega} \mathcal{F}_{\parallel}(\omega, 0)}{a^{2}}
=
\frac{1}{2} 
\p_{a}^{2} \p_{\omega}\mathcal{F}_{\parallel}(\omega, 0)
.
\end{equation}
Then,  \eqref{21/11/9/11:22} follows from \eqref{21/11/7/14:57},  \eqref{21/11/7/15:20} and \eqref{dF-aaw} in Proposition \ref{3d-f-para}.  

It remains to prove \eqref{21/11/9/11:23}.
  Let $(\omega,a) \in D_{\varepsilon}$ with $a\neq 0$. 
Then, by the differentation of quotient and \eqref{21/11/2/16:33}, we see that \begin{equation}\label{21/11/2/16:55}
\begin{split} 
\p_{a} g (\omega, a)
&=
\dfrac{a \p_{a} \mathcal{F}_{\parallel}(\omega, a)- \mathcal{F}_{\parallel}(\omega, a)}{a^{2}}
\\[6pt]
&=
\dfrac{\p_{a} \mathcal{F}_{\parallel}(\omega, a)- 
\p_{a}\mathcal{F}_{\parallel}(\omega, 0) }{a}
-
\frac{1}{2} \p_{a}^{2}\mathcal{F}_{\parallel}(\omega, 0)
-
\frac{1}{3!}\p_{a}^{3} \mathcal{F}_{\parallel}(\omega,0)a 
+ 
o(a)
\\[6pt]
&=
\int_{0}^{1} 
\p_{a}^{2} \mathcal{F}_{\parallel}(\omega, \theta a) \,d\theta 
-
\frac{1}{2}\p_{a}^{2} \mathcal{F}_{\parallel}(\omega, 0) 
-
\frac{1}{3!}\p_{a}^{3} \mathcal{F}_{\parallel}(\omega,0)a 
+ 
o(a).
\end{split} 
\end{equation}
Furthermore, by \eqref{21/11/2/16:55} and \eqref{21/11/2/16:42}, 
 we see that 
\begin{equation}\label{21/11/7/14:58}
\begin{split} 
\p_{a}^{2} g (\omega, 0)
&=
\lim_{a\to 0} \frac{\p_{a}g(\omega,a) - \p_{a}g(\omega,0)}{a}
\\[6pt]
&=
\lim_{a\to 0} \frac{1}{a}
\Big\{ 
\int_{0}^{1} 
\p_{a}^{2} \mathcal{F}_{\parallel}(\omega, \theta a) \,d\theta 
-
\p_{a}^{2} \mathcal{F}_{\parallel}(\omega, 0) 
-
\frac{1}{3!}\p_{a}^{3} \mathcal{F}_{\parallel}(\omega,0)a 
+ 
o(a)
\Big\}
\\[6pt]
&=
\lim_{a\to 0}
\int_{0}^{1} \int_{0}^{1} 
\p_{a}^{3} \mathcal{F}_{\parallel}(\omega, \theta \kappa a) \,d\kappa d\theta 
-
\frac{1}{3!}\p_{a}^{3} \mathcal{F}_{\parallel}(\omega,0) 
\\[6pt]
&=
\int_{0}^{1}\int_{0}^{1} \p_{a}^{3} \mathcal{F}_{\parallel}(\omega, 0) \,d\kappa d\theta 
-
\frac{1}{6}\p_{a}^{3} \mathcal{F}_{\parallel}(\omega,0) 
=
\frac{1}{3} \p_{a}^{3} \mathcal{F}_{\parallel}(\omega,0)
.
\end{split} 
\end{equation}
Thus, we have completed the proof. 
\end{proof} 


\begin{lemma}\label{g-pro}
Assume $p>1$, and let $0<\varepsilon <\varepsilon_{3}$. 
 Then, the function $g$ is $C^{2}$ on $D_{\varepsilon}$. 
 Furthermore, the following hold:   
\begin{equation}\label{21/7/31/17:26}
g(\omega_{p}, 0) = 0, 
\qquad 
\p_{a} g(\omega_{p}, 0) = 0 ,
\qquad 
\p_{\omega} g(\omega_{p}, 0) 
= 
-\frac{1}{\omega_{p}} <0
.
\end{equation} 
\end{lemma}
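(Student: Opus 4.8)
The plan is to separate the $C^{2}$ regularity from the three values in \eqref{21/7/31/17:26}, reading the latter off the formulas already derived in Lemma \ref{21/11/9/11:7} and Propositions \ref{cor1-F-par}--\ref{2d-f-para}. For the regularity I would use a Hadamard-type representation rather than patching together the one-sided formulas of Lemma \ref{21/11/9/11:7}. Since $\mathcal{F}_{\parallel}(\omega, 0) = 0$ by \eqref{eq2-12}, the fundamental theorem of calculus gives
\[
g(\omega, a) = \int_{0}^{1} \p_{a}\mathcal{F}_{\parallel}(\omega, ta)\, dt
\qquad \text{for all } (\omega, a) \in D_{\varepsilon},
\]
where at $a = 0$ the right-hand side reduces to $\p_{a}\mathcal{F}_{\parallel}(\omega, 0) = g(\omega, 0)$. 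Because $\mathcal{F}_{\parallel}$ is $C^{3}$ on $D_{\varepsilon}$ by Proposition \ref{3d-f-para}, the integrand is $C^{2}$ in $(\omega, a)$, so differentiation under the integral over the compact interval $[0,1]$ shows at once that $g \in C^{2}(D_{\varepsilon})$ and reproduces \eqref{21/11/9/11:14}--\eqref{21/11/9/11:23}.

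The first two values are forced by symmetry. By definition $g(\omega_{p}, 0) = \p_{a}\mathcal{F}_{\parallel}(\omega_{p}, 0)$, and using \eqref{eq5-35} together with the self-adjointness of $\mathbf{L}_{+}(\omega_{p}, 0) = \partial_{u}\mathcal{F}(\omega_{p}, R_{\omega_{p}})$ (see \eqref{21/8/1/15:10}) I would move the operator onto $\psi_{\omega_{p}}\cos y$; since $\lambda(\omega_{p}) = 0$ in \eqref{21/11/4/11:58}, i.e. $\mathbf{L}_{+}(\omega_{p}, 0)\psi_{\omega_{p}}\cos y = 0$, the whole pairing vanishes, giving $g(\omega_{p}, 0) = 0$. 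For $\p_{a}g(\omega_{p}, 0) = \tfrac{1}{2}\p_{a}^{2}\mathcal{F}_{\parallel}(\omega_{p}, 0)$ from \eqref{21/11/9/11:14} I would apply \eqref{eq5-37}; the $\mathbf{L}_{+}$-term drops by the same self-adjointness argument, leaving $-\langle V_{2}(\omega_{p}, 0)\{\p_{a}\varphi(\omega_{p}, 0)\}^{2}, \psi_{\omega_{p}}\cos y\rangle$. With $\p_{a}\varphi(\omega_{p}, 0) = \psi_{\omega_{p}}\cos y$ from \eqref{21/8/1/15:1} this pairing carries the factor $\int_{0}^{2\pi}\cos^{3}y\, dy = 0$, so $\p_{a}g(\omega_{p}, 0) = 0$.

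The genuinely computational step, and the main obstacle, is $\p_{\omega}g(\omega_{p}, 0) = \p_{a}\p_{\omega}\mathcal{F}_{\parallel}(\omega_{p}, 0)$ from \eqref{21/11/7/12:24}. Using \eqref{eq5-38} and annihilating the $\mathbf{L}_{+}$-term as before leaves
\[
\p_{\omega}g(\omega_{p}, 0) = 1 - \big\langle V_{2}(\omega_{p}, 0)\, \p_{a}\varphi(\omega_{p}, 0)\, \p_{\omega}\varphi(\omega_{p}, 0),\ \psi_{\omega_{p}}\cos y\big\rangle,
\]
where now $\p_{\omega}\varphi(\omega_{p}, 0) = \p_{\omega}R_{\omega_{p}}$ by \eqref{phi-a=0} and the $y$-integral gives $\int_{0}^{2\pi}\cos^{2}y\, dy = \pi$. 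Inserting $V_{2}(\omega_{p}, 0) = p(p-1)R_{\omega_{p}}^{p-2}$ and the normalisation $\psi_{\omega_{p}}^{2} = R_{\omega_{p}}^{p+1}/(\pi\int_{\R}R_{\omega_{p}}^{p+1})$ coming from \eqref{eq1-10} reduces the pairing to the one-dimensional integral $\int_{\R}R_{\omega_{p}}^{2p-1}\p_{\omega}R_{\omega_{p}}$. I would evaluate this by \eqref{21/11/6/10:6} with $q = 2p-1$ and then \eqref{21/11/6/14:34} with $r = p$, obtaining $\tfrac{(p+1)^{2}}{4p(p-1)}\int_{\R}R_{\omega_{p}}^{p+1}$; after substitution the pairing collapses to $\tfrac{(p+1)^{2}}{4}$, whence
\[
\p_{\omega}g(\omega_{p}, 0) = 1 - \frac{(p+1)^{2}}{4} = -\frac{(p-1)(p+3)}{4} = -\frac{1}{\omega_{p}} < 0
\]
by the value $\omega_{p} = \frac{4}{(p-1)(p+3)}$ in \eqref{21/10/24/11:29}. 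The delicate point is purely bookkeeping: keeping the $\psi_{\omega_{p}}$-normalisation and the two Pohozaev-type identities straight so that all the $\omega_{p}$- and $p$-dependent constants cancel to leave exactly $-1/\omega_{p}$.
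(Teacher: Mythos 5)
Your proposal is correct. The evaluation of the three values in \eqref{21/7/31/17:26} coincides with the paper's own computation: you annihilate the $\mathbf{L}_{+}$-pairings by self-adjointness together with $\mathbf{L}_{+}(\omega_{p},0)\psi_{\omega_{p}}\cos y=0$ (the paper records exactly this as \eqref{21/11/7/9:45}), use the vanishing of the integral of $\cos^{3}y$ for $\p_{a}g(\omega_{p},0)$, and for $\p_{\omega}g(\omega_{p},0)$ reduce to $\int_{\R}R_{\omega_{p}}^{2p-1}\,\p_{\omega}R_{\omega}|_{\omega=\omega_{p}}$ and evaluate it by \eqref{21/11/6/10:6} with $q=2p-1$ followed by \eqref{21/11/6/14:34} with $r=p$; this is precisely the chain in the paper's display \eqref{21/11/1/10:35}, and your constants $\tfrac{(p+1)^{2}}{4p(p-1)}$ and $1-\tfrac{(p+1)^{2}}{4}=-\tfrac{1}{\omega_{p}}$ agree with it.

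Where you genuinely differ from the paper is the $C^{2}$ regularity. The paper proceeds in two stages: Lemma \ref{21/11/9/11:7} first produces the second derivatives of $g$ on the axis $\{a=0\}$ by difference quotients and third-order Taylor expansions of $\mathcal{F}_{\parallel}$, and the proof of Lemma \ref{g-pro} then verifies continuity of $\p_{\omega}^{2}g$, $\p_{\omega}\p_{a}g$ and $\p_{a}^{2}g$ at $(\omega,0)$ through three separate limit computations, one of which requires the auxiliary identity $\p_{\omega}^{2}\mathcal{F}_{\parallel}(\omega,0)=0$ (see \eqref{21/11/10/11:18}, a consequence of $y$-independence). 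You replace all of this by Hadamard's lemma: since $\mathcal{F}_{\parallel}(\omega,0)=0$ by \eqref{eq2-12}, the representation $g(\omega,a)=\int_{0}^{1}\p_{a}\mathcal{F}_{\parallel}(\omega,ta)\,dt$ holds on all of $D_{\varepsilon}$ (including $a=0$), and since $\mathcal{F}_{\parallel}\in C^{3}$ by Proposition \ref{3d-f-para}, differentiation under the integral over the compact interval $[0,1]$ gives $g\in C^{2}$ in one stroke, with the on-axis formulas \eqref{21/11/7/12:24}--\eqref{21/11/9/11:23} dropping out as the moments $\int_{0}^{1}dt=1$, $\int_{0}^{1}t\,dt=\tfrac{1}{2}$, $\int_{0}^{1}t^{2}\,dt=\tfrac{1}{3}$ applied to the appropriate derivatives of $\mathcal{F}_{\parallel}$ at $(\omega,0)$. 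This is a genuine simplification: it merges Lemma \ref{21/11/9/11:7} and the regularity half of Lemma \ref{g-pro} into a single step, eliminates the Taylor-remainder bookkeeping and the auxiliary identity \eqref{21/11/10/11:18} entirely, and still reproduces verbatim the formulas that the proof of Theorem \ref{main-prop} quotes later (notably \eqref{21/11/9/11:23} in \eqref{21/11/12/10:56}), so nothing downstream is lost. Both routes rest on the same hard input, the $C^{3}$ regularity of $\mathcal{F}_{\parallel}$ from Proposition \ref{3d-f-para}; yours just exploits it more efficiently.
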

\begin{proof}[Proof of Lemma \ref{g-pro}]
We shall show that $g$ is $C^{2}$ on $D_{\varepsilon}$. 
 Since $\mathcal{F}_{\parallel}$ is $C^{3}$ on $D_{\varepsilon}$ (see Proposition \ref{3d-f-para}), it is obvious that $g$ is twice continuously differentiable at $(\omega,a)\in D_{\varepsilon}$ with $a\neq 0$. 
 Furthermore, by Lemma \ref{21/11/9/11:7}, it suffices to prove the continuity of the second derivatives $\p_{\omega}^{2}g$, $\p_{\omega} \p_{a}g$ and $\p_{a}^{2}g$ at $(\omega, 0)$. 

By \eqref{phi-a=0}, \eqref{21/8/1/15:10} and $\varphi$ being $C^{2}$ on $D_{\varepsilon}$ (see Proposition \ref{main-lem-1}), we see that 
both $\mathbf{L}_{+}(\omega,0) 
\partial_{\omega}^{2}\varphi(\omega, 0)$ and 
 $V_{2}(\omega, 0) \{\partial_{\omega}\varphi(\omega, 0)\}^{2}$
 are independent of $y$. 
 Hence, \eqref{eq5-39} in Proposition \ref{2d-f-para} together with 
the integral of $\cos{y}$ over $[-\pi,\pi]$ being zero 
 shows that  
\begin{equation}\label{21/11/10/11:18}
\partial_{\omega}^{2} \mathcal{F}_{\parallel}(\omega, 0)
=
\langle \mathbf{L}_{+}(\omega,0) 
\partial_{\omega}^{2}\varphi(\omega, 0)
-
V_{2}(\omega, 0) \{\partial_{\omega}\varphi(\omega, 0)\}^{2}
, 
\psi_{\omega_{p}} \cos{y}
\rangle 
=0
.
\end{equation} 
Then, by the definition of $g$, \eqref{21/11/10/11:18} and \eqref{21/11/7/14:41} in Lemma \ref{21/11/9/11:7}, we see that 
\begin{equation}\label{21/11/9/17:53}
\lim_{a\to 0} \p_{\omega}^{2}g(\omega, a)
=
\lim_{a\to 0} \frac{\p_{\omega}^{2} \mathcal{F}_{\parallel}(\omega,a)}{a}
=
\p_{a}\p_{\omega}^{2}  
\mathcal{F}_{\parallel}(\omega, 0)
=
\p_{\omega}^{2} g (\omega, 0)
.
\end{equation}


Next, we consider $\p_{\omega}\p_{a} g$. 
 By the differentation of quotient,  the Taylor expansion of $\p_{\omega}\mathcal{F}_{\parallel}$ (see \eqref{21/11/7/15:59}), and \eqref{21/11/9/11:22} in Lemma \ref{21/11/9/11:7}, 
 we see that 
\begin{equation}\label{21/11/10/11:29}
\begin{split} 
&\lim_{a\to 0}
\p_{\omega}\p_{a} g (\omega, a)
=
\lim_{a\to 0}
\p_{\omega}
\Big\{
\dfrac{a \p_{a} \mathcal{F}_{\parallel}(\omega, a)- \mathcal{F}_{\parallel}(\omega, a)}{a^{2}}
\Big\}
\\[6pt]
&=
\lim_{a\to 0}
\Big\{
\dfrac{a \p_{a} \p_{\omega}\mathcal{F}_{\parallel}(\omega, a)
- 
a \p_{a} \p_{\omega}\mathcal{F}_{\parallel}(\omega, 0)}{a^{2}}
-\frac{1}{2}\p_{a}^{2}\p_{\omega}\mathcal{F}_{\parallel}(\omega,0) 
+
o_{a}(1)
\Big\}
\\[6pt]
&=
\frac{1}{2}\p_{a}^{2}\p_{\omega}\mathcal{F}_{\parallel}(\omega,0) 
=
\p_{\omega}\p_{a}g(\omega,0).
\end{split} 
\end{equation}


Finally, we consider $\p_{a}^{2}g$. 
 By the definition of $g$, \eqref{21/11/2/16:33} and \eqref{21/11/8/12:40}, 
 we can verify that for any $(\omega,a)\in D_{\varepsilon}$ with $a\neq 0$, 
\begin{equation}\label{21/11/7/16:35}
\begin{split} 
\p_{a}^{2} g (\omega, a)
&=
\dfrac{a^{2} \p_{a}^{2} \mathcal{F}_{\parallel}(\omega, a)- 2 a \p_{a} \mathcal{F}_{\parallel}(\omega, a) + 2 \mathcal{F}_{\parallel}(\omega,a) 
}{a^{3}}
\\[6pt]
&=
\dfrac{a^{2} \p_{a}^{2} \mathcal{F}_{\parallel}(\omega, a)- 2 a \p_{a} \mathcal{F}_{\parallel}(\omega, a) + 2 a \p_{a} \mathcal{F}_{\parallel}(\omega,0)
+ a^{2} \p_{a}^{2}\mathcal{F}_{\parallel}(\omega,0)}{a^{3}} 
\\[6pt]
&\quad +
\frac{1}{3}\p_{a}^{3}\mathcal{F}_{\parallel}(\omega,0)+o_{a}(1)
\\[6pt]
&=
a^{-1} 
\p_{a}^{2} \mathcal{F}_{\parallel}(\omega, a)
-
a^{-1}
\p_{a}^{2} \mathcal{F}_{\parallel}(\omega,0) 
-
\frac{2}{3}\p_{a}^{3}\mathcal{F}_{\parallel}(\omega,0)+o_{a}(1)
\\[6pt]
&=
\int_{0}^{1} \p_{a}^{3} \mathcal{F}_{\parallel}(\omega, \theta a)\,d\theta 
-
\frac{2}{3}\p_{a}^{3}\mathcal{F}_{\parallel}(\omega,0)+o_{a}(1)
.
\end{split} 
\end{equation}
Then, the continuity of $\p_{a}^{2}g$ at $(\omega,0)$ follows from \eqref{21/11/7/16:35} and \eqref{21/11/9/11:23} in Lemma \ref{21/11/9/11:7}:  
\begin{equation}\label{21/11/9/11:51}
\lim_{a\to 0}\p_{a}^{2} g (\omega, a)
=
\int_{0}^{1} \p_{a}^{3} \mathcal{F}_{\parallel}(\omega, 0)\,d\theta 
-
\frac{2}{3}\p_{a}^{3}\mathcal{F}_{\parallel}(\omega,0)
=
\p_{a}^{2}g(\omega,0)
. 
\end{equation}
Thus, we have proved that $g$ is $C^{2}$ on $D_{\varepsilon}$. 


We shall prove \eqref{21/7/31/17:26}.  
  Note that the following follows from the self-adjointness of $\mathbf{L}_{+}(\omega, a)$ on $L^{2}(\R\times \T)$, \eqref{21/8/1/15:10} and \eqref{eq1-12}:   
\begin{equation}\label{21/11/7/9:45}
\langle 
\mathbf{L}_{+}(\omega_{p}, 0) u,~
\psi_{\omega_{p}} \cos{y} 
\rangle=0
\quad 
\mbox{for all $u\in H^{2}(\R\times \T)$}
.  
\end{equation}
Then, by \eqref{eq5-35} in Proposition \ref{cor1-F-par}, and \eqref{21/11/7/9:45}, 
 we see that 
\begin{equation}\label{21/11/10/13:23} 
g(\omega_{p},0)
=
\langle 
\mathbf{L}_{+}(\omega_{p},0) \p_{a}\varphi(\omega_{p},0),~\psi_{\omega_{p}}\cos{y} \rangle 
=0
.
\end{equation} 
By \eqref{21/11/9/11:14},  \eqref{eq5-37} in Proposition \ref{2d-f-para},  $V_{2}(\omega_{p},0)=p(p-1) R_{\omega_{p}}^{p-2}$ (see \eqref{def-V} and \eqref{phi-a=0}), $\partial_{a}\varphi(\omega_{p}, 0)=\psi_{\omega_{p}} \cos{y}$ (see \eqref{21/8/1/15:1}), \eqref{21/11/7/9:45} and the integral of $(\cos{y})^{3}$ over $[-\pi,\pi]$ being zero , we see that 
\begin{equation}\label{21/11/6/17:33}
\begin{split}
\p_{a} g (\omega_{p}, 0) 
&=
\frac{1}{2}
\langle 
\mathbf{L}_{+}(\omega_{p}, 0) 
\p_{a}^{2}\varphi(\omega_{p},0)
-
p(p-1) R_{\omega_{p}}^{p-2}\{ \psi_{\omega_{p}}\cos{y} \}^{2}
,~ 
\psi_{\omega_{p}} \cos{y}
\rangle 
\\[6pt]
&=
-\frac{p(p-1)}{2}
\int_{\R} R_{\omega_{p}}^{p-2}\psi_{\omega_{p}}^{3} \,dx 
\int_{\T}(\cos{y})^{3}\,dy 
=0
.
\end{split} 
\end{equation}
By \eqref{21/11/7/12:24}, \eqref{eq5-38} in Proposition \ref{2d-f-para}, 
 \eqref{21/11/7/9:45}, 
 \eqref{phi-a=0}, \eqref{21/8/1/15:1}, the definition of $\psi_{\omega_{p}}$ (see \eqref{eq1-10}), 
 \eqref{21/11/6/10:6} and 
 \eqref{21/11/6/14:34}, we see that 
\begin{equation}\label{21/11/1/10:35}
\begin{split} 
&
\partial_{\omega} g (\omega_{p}, 0) = 
\partial_{\omega} 
\partial_{a} \mathcal{F}_{\parallel} 
(\omega_{p}, 0)
\\[6pt]
&=
1 +
\langle \mathbf{L}_{+}(\omega_{p}, 0) \partial_{\omega}\partial_{a}\varphi(\omega_{p}, 0)- V_{2}(\omega_{p}, 0) \p_{a}\varphi(\omega_{p}, 0)
 \p_{\omega} \varphi (\omega_{p},0),~ 
\psi_{\omega_{p}} \cos{y}
\rangle 
\\[6pt]
&=
1 - 
p(p-1) 
\langle R_{\omega_{p}}^{p-2} 
(\psi_{\omega_{p}} \cos{y})
\p_{\omega}R_{\omega}|_{\omega=\omega_{p}},~ 
\psi_{\omega_{p}} \cos{y}
\rangle 
\\[6pt]
&=
1 - 
\frac{p(p-1)}{\|R_{\omega_{p}} \|_{L^{p+1}(\R)}^{p+1}} 
\int_{\R}
R_{\omega_{p}}^{2p-1} 
\p_{\omega}R_{\omega}|_{\omega=\omega_{p}} 
\,dx 
\\[6pt]
&=
1 - 
\frac{p(p-1)}{\|R_{\omega_{p}} \|_{L^{p+1}(\R)}^{p+1}} 
\frac{3p+1}{4p(p-1)}
\omega_{p}^{-1} \int_{\R}
R_{\omega_{p}}^{2p} 
\,dx 
\\[6pt]
&=
1 - 
\frac{1}{\|R_{\omega_{p}} \|_{L^{p+1}(\R)}^{p+1}} 
\frac{(p+1)^{2}}{4}
\int_{\mathbb{R}} 
R_{\omega_{p}}^{p+1}
=
-\frac{(p-1)(p+3)}{4}
=
-\frac{1}{\omega_{p}}
.
\end{split}
\end{equation}
Thus, we have completed the proof of the lemma. 
\end{proof}

Now, we are in a position to prove Theorem \ref{main-prop}. 
\begin{proof}[Proof of Theorem \ref{main-prop}]
The first step to prove Theorem \ref{main-prop} is to find a curve $\omega(a)$ parametrized by $a$ such that $(\omega(a),a)$ satisfies the bifurcation equation \eqref{bifur-eq}:
\begin{equation}\label{21/11/10/13:58}
\mathcal{F}_{\parallel}(\omega(a), a) 
= \langle  
\mathcal{F}(\omega, \varphi(\omega(a),a)),~\psi_{\omega_{p}}\cos{y}
\rangle  
= 0
.
\end{equation}
The implicit function theorem together with  Lemma \ref{g-pro} shows that 
 there exist $a_{*}>0$, $\delta_{*}>0$ and a $C^{2}$-curve $\omega(\cdot) \colon a \in (-a_{*},a_{*}) \mapsto \omega(a)\in (\omega_{p}-\delta_{*}, \omega_{p}+\delta_{*})$ with the following properties:
\begin{enumerate}
\item  
\begin{equation} \label{g-eq0}
g(\omega(a), a) = 0,
\qquad 
\omega(0)=\omega_{p},
\qquad 
\frac{d \omega}{da}(a)
=
-\frac{\p_{a}g(\omega(a),a)}{\p_{\omega}g(\omega(a),a)}
.
\end{equation}
\item~Let $Z_{p}$ be the set of zeros of $g=0$ in $(\omega_{p}-\delta_{*}, \omega_{p}+\delta_{*})\times (-a_{*},a_{*})$. Then,   
\begin{equation}\label{21/11/12/16:25}
Z_{p}=
\{(\omega(a),a) \colon a \in (-a_{*},a_{*})\}
.
\end{equation} 
\end{enumerate}

Next, we define the function $Q \colon (-a_{*},a_{*}) \to H^{2}(\R \times \T)$ by 
\begin{equation}\label{21/11/10/14:22}
Q(a)
:=\varphi(\omega(a),a)
=R_{\omega_{p}}+ a \psi_{\omega_{p}} \cos{y} + \eta(\omega(a),a)
.
\end{equation}
Note that $Q$ is $C^{2}$ on $(-a_{*},a_{*})$ in $H^{2}(\R \times \T)$.  
 
Since $g(\omega,a)=0$ with $a\neq 0$ implies that $\mathcal{F}_{\parallel}(\omega,a)=0$, the first claim of Theorem \ref{main-prop} follows from \eqref{21/11/12/16:25} and Lemma \ref{main-lem-2}. 
 Furthermore, the claim \eqref{21/11/10/14:59} follows immediately from \eqref{g-eq0} and \eqref{21/7/31/17:26}.


We shall prove \eqref{main-omega1}, \eqref{expan-phi} and \eqref{eq1-14}.   
 
 First, note that the integral of $(\cos{y})^{3}$ over $[-\pi,\pi]$ being zero shows that 
\begin{equation}\label{21/11/12/14:25}
R_{\omega_{p}}^{p-2}\{ \psi_{\omega_{p}}\cos{y} \}^{2}\in X_{2}.
\end{equation} 
Furthermore, by \eqref{der2-h-a} in Proposition \ref{main-lem-1}, \eqref{21/8/1/15:15}, 
 $V_{2}(\omega_{p},0)=p(p-1) R_{\omega_{p}}^{p-2}$ (see \eqref{def-V} and \eqref{phi-a=0}), \eqref{21/8/1/15:1} and \eqref{21/11/12/14:25}, we see that 
\begin{equation}\label{21/11/11/11:53}
\partial_{a}^{2} \varphi (\omega_{p}, 0)
=
p(p-1) 
\big\{ 
P_{\perp} \partial_{u}\mathcal{F}(\omega_{p},R_{\omega_{p}})|_{X_{2}}
\big\}^{-1}
\big(
R_{\omega_{p}}^{p-2}
\{ \psi_{\omega_{p}}\cos{y} \}^{2}
\big)
.
\end{equation} 


 Differentiating both sides of the last equation in \eqref{g-eq0}, 
and using \eqref{21/11/10/14:59}, \eqref{21/7/31/17:26} and \eqref{21/11/9/11:23} in Lemma \ref{21/11/9/11:7},  
 we see that 
\begin{equation}\label{21/11/12/10:56}
\frac{d^{2} \omega}{da^{2}}(0)
=
-\frac{\omega_{p}}{3} 
\p_{a}^{3} \mathcal{F}_{\parallel}(\omega_{p}, a)
.
\end{equation}
By \eqref{dF-aaa} in Proposition \ref{3d-f-para}, \eqref{21/11/7/9:45},
 $V_{2}(\omega_{p},0)=p(p-1) R_{\omega_{p}}^{p-2}$ and 
 $V_{3}(\omega_{p},0)=p(p-1)(p-2) R_{\omega_{p}}^{p-3}$ (see \eqref{def-V} and \eqref{phi-a=0}), 
 $\partial_{a}\varphi(\omega_{p}, 0)=\psi_{\omega_{p}} \cos{y}$ (see \eqref{21/8/1/15:1}), \eqref{21/11/11/11:53} and \eqref{21/11/12/14:25}, 
 we see that 
\begin{equation}\label{21/11/12/14:6}
\begin{split}
&\partial_{a}^{3} \mathcal{F}_{\parallel}(\omega_{p}, 0)
\\[6pt]
&= 
\langle 
-
3 V_{2}(\omega_{p}, 0) 
\partial_{a}\varphi(\omega_{p}, 0) 
\partial_{a}^{2}\varphi(\omega_{p}, 0)
-
V_{3}(\omega_{p}, 0) \{\partial_{a}\varphi(\omega_{p}, 0)\}^{3}
,~ 
\psi_{\omega_{p}} \cos{y}
\rangle 
\\[6pt]
&= 
-
3 \{ p(p-1)\}^{2}
\langle 
R_{\omega_{p}}^{p-2} 
\psi_{\omega_{p}}\cos{y} \, 
\mathbf{T}(\omega_{p}, 0)^{-1}
\big(
R_{\omega_{p}}^{p-2}
\{ \psi_{\omega_{p}}\cos{y} \}^{2}
\big)
,~ 
\psi_{\omega_{p}} \cos{y}
\rangle 
\\[6pt]
&\quad 
-p(p-1)(p-2)
\langle 
R_{\omega_{p}}^{p-3} 
\{\psi_{\omega_{p}}\cos{y}\}^{3}
,~ 
\psi_{\omega_{p}} \cos{y}
\rangle 
\\[6pt]
&= 
-
3 \{ p(p-1)\}^{2}
\langle 
\mathbf{T}(\omega_{p}, 0)^{-1}
\big(
R_{\omega_{p}}^{p-2}
\{ \psi_{\omega_{p}}\cos{y} \}^{2}
\big)
,~ 
R_{\omega_{p}}^{p-2} 
\{ \psi_{\omega_{p}}\cos{y}\}^{2} 
\rangle 
\\[6pt]
&\quad 
-p(p-1)(p-2)
\langle 
R_{\omega_{p}}^{p-3} 
\{\psi_{\omega_{p}}\cos{y}\}^{3}
,~ 
\psi_{\omega_{p}} \cos{y}
\rangle 
.
\end{split}
\end{equation} 
Plugging \eqref{21/11/12/14:6} into \eqref{21/11/12/10:56}, we obtain \eqref{main-omega1}.


Observe that  
\begin{equation}\label{21/11/11/11:7}
\frac{dQ}{da}(a)
=
\p_{\omega}\varphi(\omega(a),a)\frac{d\omega}{da}(a) 
+
\p_{a}\varphi(\omega(a),a) 
.
\end{equation}
Then, by \eqref{21/11/11/11:7}, \eqref{21/11/10/14:59} 
 and \eqref{21/8/1/15:1},  we see that 
\begin{equation}\label{21/11/11/11:22}
\frac{dQ}{da}(0)
=
\p_{a}\varphi(\omega_{p},0) 
=
\psi_{\omega_{p}}\cos{y}
.
\end{equation}
Furthermore, differentiating both sides of \eqref{21/11/11/11:7}, 
 and using \eqref{21/11/10/14:59} and 
 \eqref{phi-a=0}, 
 we see that 
\begin{equation}\label{21/11/11/11:12}
\begin{split}
\frac{d^{2}Q}{da^{2}}(0)
&=
\p_{\omega}^{2}\varphi(\omega(0),0) \Big( \frac{d\omega}{da}(0) \Big)^{2}
+
\p_{\omega}\varphi(\omega(0),0)\frac{d^{2}\omega}{da^{2}}(0) 
\\[6pt]
&\quad +
\p_{\omega}\p_{a}\varphi(\omega(0),0)\frac{d\omega}{da}(0)
+
\p_{a}^{2}\varphi(\omega(0),0)
\\[6pt]
&=
\p_{\omega}R_{\omega}|_{\omega=\omega_{p}} \frac{d^{2}\omega}{da^{2}}(0) 
+
\p_{a}^{2}\varphi(\omega_{p},0) 
.
\end{split} 
\end{equation}
Then, the Taylor expansion together with $Q(0)=\varphi(\omega(0),0)=R_{\omega_{p}}$ (see \eqref{phi-a=0}), \eqref{21/11/11/11:22}
 and \eqref{21/11/11/11:12}  shows that the following holds in $H^{2}(\R\times \T)$: 
\begin{equation}\label{21/11/10/14:41}
\begin{split}
Q(a)
=
R_{\omega_{p}} 
+
a \psi_{\omega_{p}}\cos{y} 
+
\frac{1}{2}a^{2} 
\big\{ 
\p_{\omega}R_{\omega}|_{\omega=\omega_{p}} \frac{d^{2}\omega}{da^{2}}(0)  
+
\p_{a}^{2}\varphi(\omega_{p},0)
\big\} 
+
o(a^{2})
.
\end{split} 
\end{equation}
Clearly, this shows that \eqref{expan-phi} holds.  


It remains to prove the last claim \eqref{eq1-14}. 
 By \eqref{21/11/10/14:41}, $\|\psi_{\omega_{p}}\cos{y}\|_{L^{2}(\R\times \T)}=1$ and the integral of $\cos{y}$ over $[-\pi,\pi]$ being zero, we see that 
\begin{equation}\label{21/11/10/16:29}
\begin{split}
\|Q(a)\|_{L^{2}(\R\times \T)}^{2}
&=
\|R_{\omega_{p}}\|_{L^{2}(\R\times \T)}^{2}
+
a^{2} 
\\[6pt]
&\quad +
a^{2} \frac{d^{2}\omega}{da^{2}}(0) 
\langle 
R_{\omega_{p}},~
\p_{\omega}R_{\omega}|_{\omega=\omega_{p}}  
\rangle 
+
a^{2}
\langle 
R_{\omega_{p}},~
\p_{a}^{2}\varphi(\omega_{p},0)
\rangle 
+
o(a^{2})
\end{split}
\end{equation}
Recall that $R_{\omega}, \p_{\omega}R_{\omega} \in X_{2}$ (see \eqref{21/7/21/16:37}) and 
 $\p_{u}\mathcal{F}(\omega_{p},R_{\omega_{p}}) \colon X_{2}\to Y_{2}$ is 
 bijective (see \eqref{21/7/24/16:40} and \eqref{orthgo-1}). 
Then, by \eqref{21/11/11/11:53}, \eqref{21/11/7/15:47} 
 and the same computation as \eqref{21/11/1/10:35}, we see that  
\begin{equation}\label{21/11/11/14:59}
\begin{split} 
&\langle
R_{\omega_{p}},~ 
\partial_{a}^{2} \varphi (\omega_{p}, 0)
\rangle
\\[6pt]
&=
p(p-1)
\langle 
\big\{ 
P_{\perp} \partial_{u}\mathcal{F}(\omega_{p},R_{\omega_{p}})|_{X_{2}}
\big\}^{-1}
R_{\omega_{p}},~ 
R_{\omega_{p}}^{p-2}
\{ \psi_{\omega_{p}}\cos{y} \}^{2}
\rangle 
\\[6pt]
&=
-p(p-1)
\langle 
\p_{\omega} R_{\omega}|_{\omega=\omega_{p}},~ 
R_{\omega_{p}}^{p-2}
\{ \psi_{\omega_{p}}\cos{y} \}^{2}
\rangle 
=
-1-\frac{1}{\omega_{p}}
.
\end{split}
\end{equation} 
Plugging \eqref{21/11/11/14:59} into \eqref{21/11/10/16:29}, and using \eqref{21/11/6/10:6}, we obatin \eqref{eq1-14}.
 Thus, we have completed the proof of the theorem. 
\end{proof}


\begin{thank}
T.A was supported 
by JSPS KAKENHI Grant Number 20K03697.
Y.B was supported by PIMS grant 
and NSERC grant (371637-2019).
S.I was supported by NSERC grant 
(371637-2019). 
H.K. was supported by JSPS KAKENHI 
Grant Number JP20K03706.

\end{thank}

\bibliographystyle{plain}

\noindent 
Takafumi Akahori
\\ 
Faculty of Engineering
\\
Shizuoka University
\\
Jyohoku 3-5-1, Hamamatsu-Shi, 
Shizuoka, 
432-8561, Japan
\\ 
E-mail: akahori.takafumi@shizuoka.ac.jp

\vspace{0.5cm}

\noindent
Yakine Bahri
\\
Department of Mathematics and Statistics
\\ 
University of Victoria 
\\
3800 Finnerty Road, Victoria, B.C., Canada V8P 5C2 
\\
E-mail: ybahri@uvic.ca

\vspace{0.5cm}

\noindent
Slim Ibrahim
\\
Department of Mathematics and Statistics
\\ 
University of Victoria 
\\
3800 Finnerty Road, Victoria, B.C., Canada V8P 5C2 
\\
E-mail: ibrahims@uvic.ca

\vspace{0.5cm}

\noindent
Hiroaki Kikuchi
\\
Department of Mathematics 
\\
Tsuda University 
\\
2-1-1 Tsuda-machi, Kodaira-shi, Tokyo 187-8577, JAPAN
\\
E-mail: hiroaki@tsuda.ac.jp

\end{document}